\documentclass{article}
\usepackage[cp1251]{inputenc}
\usepackage[russian]{babel}
\usepackage{amssymb,amsfonts,amsmath,amsthm,amscd}
\usepackage{graphicx}
\usepackage{enumerate}
\usepackage{soul}
\usepackage[matrix,arrow,curve]{xy}
\usepackage{longtable}
\usepackage{array}
\usepackage{bm}
\RequirePackage{russlh}
\RequirePackage{mathlh}

\emergencystretch=0em
\newdimen\symskip
\newdimen\defskip
\defskip=3pt
\newdimen\parind
\parind=\parindent
\newdimen\leftmarge
\newdimen\theoremshape
\theoremshape=11pt
\clubpenalty10000
\widowpenalty10000
\topsep\defskip
\righthyphenmin=2

\makeatletter
\newcommand*{\clei}{\nobreak\hskip\z@skip}

\newcommand{\name}[1]{`#1'}
\renewcommand{\:}{\textup{:}}
\renewcommand{\~}{\textup{;}}

\DeclareRobustCommand*{\д}{\clei\hbox{-}\clei}
\newcommand{\no}{}
\renewcommand{\@listI}{\settowidth\labelwidth{\labheadi{\no}}\listipar{\parind}{\labelwidth}}
\newcommand{\listivpar}{\topsep\defskip\partopsep0pt\parsep-\parskip\itemsep0.5\topsep}
\newcommand{\listipar}[2]{\rightmargin0pt\leftmargin#1\labelsep#1\advance\labelsep-#2\itemindent0pt\listivpar}
\renewcommand{\@listii}{\settowidth\labelwidth{\labheadii{\@roman{\no}}}\listiipar{\parind}{\labelwidth}}
\newcommand{\listiivpar}{\topsep0.5\defskip\partopsep0pt\parsep-\parskip\itemsep0.5\topsep}
\newcommand{\listiipar}[2]{\rightmargin0pt\leftmargin#1\labelsep#1\advance\labelsep-#2\itemindent0pt\listiivpar}
\def\thempfn{\ifcase\value{footnote}1\or *\or **\or ***\else\@ctrerr\fi}
\renewcommand\footnoterule{%
  \kern-3\p@
  \hrule\@width1in
  \kern2.6\p@}
\makeatother

\makeatletter

\renewcommand{\@biblabel}[1]{[#1]}
\renewenvironment{thebibliography}[1]
     {\renewcommand{\refname}{References}%
      \renewcommand{\No}{}%
      \section*{\refname}%
      \@mkboth{\MakeUppercase\refname}{\MakeUppercase\refname}%
      \list{\@biblabel{\@arabic\c@enumiv}}%
           {\itemsep\baselineskip
            \leftmargin\parind
            \settowidth\labelwidth{\@biblabel{#1}}%
            \labelsep\parind\advance\labelsep-\labelwidth
            \@openbib@code
            \usecounter{enumiv}%
            \let\p@enumiv\@empty
            \renewcommand\theenumiv{\@arabic\c@enumiv}}%
      \sloppy
      \clubpenalty4000
      \@clubpenalty\clubpenalty
      \widowpenalty4000%
      \sfcode`\.\@m}
     {\def\@noitemerr
       {\@latex@warning{Empty `thebibliography' environment}}%
      \endlist}

\def\@maketitle{%
  \newpage
  \vskip0.5em%
  UDK \udk%
  \vskip0.5em%
  MSC \msc%
  \vskip1em%
  \begin{center}\bf%
  \let\footnote\thanks%
   {\Large\@author\par}%
   \vskip1.5em%
   {\LARGE\@title\par}%
   \vskip1em%
   {\large\@date}%
  \end{center}%
  \par
  \vskip1.5em}

\def\@title{\@latex@warning@no@line{No \noexpand\title given}}

\makeatother

\sloppy
\settowidth{\symskip}{Z}
\textwidth=65\symskip
\setlength{\textheight}{40\baselineskip}
\setlength{\textheight}{\baselinestretch\textheight}
\addtolength{\textheight}{\topskip}
\oddsidemargin=4mm
\evensidemargin=4mm
\topmargin=5mm
\mathsurround=0pt

\makeatletter

\renewcommand\sectionmark[1]{%
 \markright{%
  \ifnum \c@secnumdepth >\z@
   \thesection. \ %
  \fi
 #1}}%

\renewcommand{\section}{\@startsection{section}{1}{0pt}%
{5.5ex plus .5ex minus .2ex}{1.5ex plus .3ex}%
{\center\normalfont\Large\bfseries\sffamily\bom}}
\renewcommand{\subsection}{\@startsection{subsection}{2}{0pt}%
{4.5ex plus .4ex minus .2ex}{0.75ex plus .2ex}%
{\center\normalfont\large\bfseries\sffamily\bom}}
\renewcommand{\subsubsection}{\@startsection{subsubsection}{3}{0pt}%
{2.5ex plus .5ex minus .2ex}{1ex plus .2ex}%
{\center\normalfont\bfseries\sffamily\bom}}

\newcommand{\Ss}{\textup{\S\,}}

\def\@postskip@{\hskip.5em\relax}

\def\postsection{.\@postskip@}

\def\postsubsection{.\@postskip@}

\def\postsubsubsection{.\@postskip@}

\def\postparagraph{.\@postskip@}

\def\postsubparagraph{.\@postskip@}

\def\@seccntformat#1{\csname pre#1\endcsname\csname the#1\endcsname\csname post#1\endcsname}

\makeatother

\renewcommand{\thesection}{\textup{\arabic{section}}}

\newcommand{\parr}{\par\addvspace{\defskip}}
\newcommand{\theo}[2]{\newtheorem{#1}{#2}[section]}
\newcommand{\deff}[2]{\newenvironment{#1}{\parr\textbf{#2.}}{\parr}}
\theo{cas}{Case}
\theo{problem}{Problem}
\theo{theorem}{Theorem}
\theo{lemma}{Lemma}
\theo{prop}{Proposition}
\theo{stm}{Statement}
\theo{fact}{Fact}
\theo{imp}{Corollary}
\theo{ex}{Example}
\deff{df}{Definition}
\deff{note}{Note}
\deff{denote}{Notation}
\deff{denotes}{Notations}
\deff{hint}{Hint}
\deff{answer}{Answer\:}

\makeatletter
\def\@begintheorem#1#2[#3]{%
  \deferred@thm@head{\the\thm@headfont \thm@indent
    \@ifempty{#1}{\let\thmname\@gobble}{\let\thmname\@iden}%
    \@ifempty{#2}{\let\thmnumber\@gobble}{\let\thmnumber\@iden}%
    \@ifempty{#3}{\let\thmnote\@gobble}{\let\thmnote\@iden}%
    \thm@notefont{\bfseries\upshape}%
    \indent%
    \thm@swap\swappedhead\thmhead{#1}{#2}{#3}%
    \the\thm@headpunct
    \thmheadnl 
    \hskip\thm@headsep
  }%
  \ignorespaces}
\renewenvironment{proof}{\setcounter{cas}{0}\parr\pushQED{\qed}\normalfont$\square\quad$}{\setcounter{cas}{0}\popQED\@endpefalse\parr}

\makeatother


\newcommand{\labheadi}[1]{\textup{#1)}}
\newcommand{\labheadii}[1]{\textup{(#1)}}
\newcommand{\labhi}[1]{\labheadi{\arabic{#1}}}

\newenvironment{nums}[1]{\renewcommand{\no}{#1}\begin{enumerate}}{\end{enumerate}}
\newcommand{\eqn}[1]{\begin{equation}#1\end{equation}}
\newcommand{\equ}[1]{\begin{equation*}#1\end{equation*}}

\newcommand{\case}[1]{\begin{cases}#1\end{cases}}
\newcommand{\cask}[1]{\begin{casks}#1\end{casks}}

\newcommand{\rbmat}[1]{\begin{pmatrix}#1\end{pmatrix}}

\makeatletter

\def\LT@makecaption#1#2#3{%
  \LT@mcol\LT@cols c{\hbox to\z@{\hss\parbox[t]\LTcapwidth{%
    \sbox\@tempboxa{#1{#2. }#3}%
    \ifdim\wd\@tempboxa>\hsize
      #1{#2. }#3%
    \else
      \hbox to\hsize{\hfil\box\@tempboxa\hfil}%
    \fi
    \endgraf\vskip\baselineskip}%
  \hss}}}

\@addtoreset{equation}{section}
\@addtoreset{footnote}{section}

\newenvironment{casks}{%
  \matrix@check\casks\env@casks
}{%
  \endarray\right.%
}
\def\env@casks{%
  \let\@ifnextchar\new@ifnextchar
  \left\lbrack
  \def\arraystretch{1.2}%
  \array{@{}l@{\quad}l@{}}%
}

\newcounter{numt}
\newcounter{col}
\newcounter{coll}

\makeatother

\renewcommand{\ge}{\geqslant}
\renewcommand{\le}{\leqslant}
\newcommand{\fa}{\,\forall\,}

\newcommand{\es}{\varnothing}
\newcommand{\eqi}{\equiv}
\newcommand{\neqi}{\not\eqi}
\newcommand{\subs}{\subset}
\newcommand{\sups}{\supset}

\newcommand{\sm}{\setminus}
\newcommand{\wo}{\backslash}
\newcommand{\cln}{\colon}

\newcommand{\chk}{\check}

\newcommand{\Ra}{\Rightarrow}
\newcommand{\Lra}{\Leftrightarrow}

\newcommand{\os}[1]{\overset{#1}}
\newcommand{\Inn}[1]{\smash{\os{\circ}{\smash{#1}\vph{^{_{^{_{c}}}}}}}\vph{#1}}
\newcommand{\wt}{\widetilde}

\newcommand{\Cn}[2]{\rbmat{#1\\#2}}

\newcommand{\mb}[1]{$\bm{#1}$}


\newcommand{\sums}[1]{\sum\limits_{{#1}}}

\newcommand*{\bw}[1]{#1\nobreak\discretionary{}{\hbox{$\mathsurround=0pt #1$}}{}}
\newcommand{\sco}{,\ldots,}
\newcommand{\spl}{\bw+\ldots\bw+}

\newcommand{\seq}{\bw=\ldots\bw=}

\newcommand{\sge}{\bw\ge\ldots\bw\ge}

\newcommand{\ha}[1]{\left\langle#1\right\rangle}
\newcommand{\ba}[1]{\bigl\langle#1\bigr\rangle}

\newcommand{\br}[1]{\bigl(#1\bigr)}
\newcommand{\Br}[1]{\Bigl(#1\Bigr)}
\newcommand{\bbr}[1]{\biggl(#1\biggr)}
\newcommand{\ter}[1]{\textup{(}#1\textup{)}}
\newcommand{\bgm}[1]{\bigl|#1\bigr|}
\newcommand{\Bm}[1]{\Bigl|#1\Bigr|}
\newcommand{\hn}[1]{\left\|#1\right\|}
\newcommand{\bn}[1]{\bigl\|#1\bigr\|}
\newcommand{\Bn}[1]{\Bigl\|#1\Bigr\|}

\newcommand{\bc}[1]{\bigl\{#1\bigr\}}
\newcommand{\BC}[1]{\Bigl\{#1\Bigr\}}

\newcommand{\bbl}{\bigm\wo}

\newcommand{\mbb}{\mathbb}
\newcommand{\mbf}{\mathbf}
\newcommand{\mcl}{\mathcal}
\newcommand{\mfr}{\mathfrak}

\newcommand{\R}{\mbb{R}}

\newcommand{\Z}{\mbb{Z}}
\newcommand{\N}{\mbb{N}}

\newcommand{\Cbb}{\mbb{C}}
\newcommand{\Hbb}{\mbb{H}}

\newcommand{\Eb}{\mbb{E}}

\newcommand{\Pc}{\mcl{P}}

\newcommand{\ggt}{\mfr{g}}

\newcommand{\tgt}{\mfr{t}}
\newcommand{\pd}{\partial}

\newcommand{\al}{\alpha}
\newcommand{\be}{\beta}
\newcommand{\ga}{\gamma}
\newcommand{\Ga}{\Gamma}
\newcommand{\de}{\delta}
\newcommand{\De}{\Delta}
\newcommand{\ep}{\varepsilon}
\newcommand{\la}{\lambda}
\newcommand{\La}{\Lambda}

\newcommand{\ph}{\varphi}
\newcommand{\om}{\omega}
\newcommand{\Om}{\Omega}

\DeclareMathOperator{\Lie}{Lie}

\DeclareMathOperator{\ad}{ad}

\DeclareMathOperator{\rk}{rk}

\DeclareMathOperator{\conv}{conv}

\newcommand{\GL}{\mbf{GL}}

\newcommand{\Or}{\mbf{O}}
\newcommand{\SO}{\mbf{SO}}
\newcommand{\Sp}{\mbf{Sp}}
\newcommand{\Spin}{\mbf{Spin}}

\newcommand{\glg}{\mfr{gl}}

\newcommand{\sog}{\mfr{so}}
\newcommand{\un}{\mfr{u}}
\newcommand{\sug}{\mfr{su}}
\newcommand{\spg}{\mfr{sp}}

\newcommand{\bom}{\boldmath}

\newcommand{\hph}[1]{\hphantom{#1}}
\newcommand{\vph}[1]{\vphantom{#1}}

\newcommand{\thra}{\twoheadrightarrow}



\begin{document}

\author{O.\,G.\?Styrt}
\title{On the orbit spaces\\
of irreducible representations\\
of simple compact Lie groups\\
of types \mb{B}, \mb{C}, and~\mb{D}}
\date{}
\newcommand{\udk}{512.815.1+512.815.6+512.816.1+512.816.2}
\newcommand{\msc}{22E46+17B10+17B20+17B45}

\maketitle

{\leftskip\parind\rightskip\parind
It is proved that the orbit space of an irreducible representation of a~simple connected compact Lie group of type $B$, $C$, or~$D$ can be a~smooth
manifold only in two cases.

\smallskip

\textbf{Key words\:} Lie group, topological quotient of an action.\par}

\section{Introduction}\label{introd}

The paper is a~continuation of \cite{My1,My2,My3}. First of all, we give two basic definitions that also played a~key role in those papers.

\begin{df} A~continuous map of smooth manifolds is called \textit{piecewise smooth} if it takes any smooth submanifold onto a~finite union of smooth
submanifolds.
\end{df}

In particular, any proper smooth map of smooth manifolds is piecewise smooth.

Consider a~differentiable action of a~compact Lie group~$G$ on a~smooth manifold~$M$.

\begin{df} The quotient $M/G$ is \textit{a~smooth manifold} if the topological quotient $M/G$ admits a~structure of a~smooth manifold such that
the factorization map $M\bw\to M/G$ is piecewise smooth.
\end{df}

Now we can formulate the main problem.

Let $V$ be a~real vector space and $G\subs\GL(V)$ a~compact linear group. Like in \cite{My1,My2,My3}, the question is whether the quotient $V/G$ is
a~topological manifold and, also, whether it is a~smooth manifold. Following \cite{My1,My2,My3}, we will say \name{manifold} instead of \name{topological
manifold}.

Denote by~$V_{\Cbb}$ the complex space $V\otimes\Cbb$, by~$\ggt$ the linear Lie algebra $\Lie G\subs\glg(V)$, and by~$\ggt_{\Cbb}$ the complex linear Lie
algebra $\ggt\otimes\Cbb\subs\glg(V_{\Cbb})$.

The problem has almost been solved in the following cases\:
\begin{nums}{-1}
\item $[\ggt,\ggt]=0$ (see~\cite{My1})\~
\item $\ggt\cong\sug_2$ (see~\cite{My2})\~
\item $\ggt\cong\sug_{r+1}$, $r=\rk\ggt>1$, and the linear Lie algebra $\ggt\subs\glg(V)$ is irreducible (see~\cite{My3}).
\end{nums}
In this paper we consider the case when $\ggt$ is a~simple compact Lie algebra of one of types $B_r$ ($r>1$), $C_r$ ($r>2$), and $D_r$ ($r>3$), and the
linear Lie algebra $\ggt\subs\glg(V)$ is irreducible.

Denote by~$R'$ the representation of a~complex reductive Lie group dual to its representation~$R$.

When meaning indecomposable systems of simple roots, we will use the numeration of the simple roots given in \cite{Bo}, \cite[Table\,1]{VO}, and
\cite[Table\,1]{Elsh}, denoting by~$\ph_i$ the fundamental weight with the number~$i$.

The space~$V$ has a~$G$\д invariant scalar product. Hence, we will suppose that $V$ is a~Euclidean space and the group~$G$ acts on it by orthogonal
operators. Thus, $G\subs\Or(V)$.

We obviously have $r=\rk\ggt>1$.

Denote by~$\wt{R}$ the tautological representation $\ggt_{\Cbb}\cln V_{\Cbb}$. One of the following cases occurs\:
\begin{nums}{-1}
\item $\wt{R}=R$, where $R$ is an irreducible representation\~
\item $\wt{R}=R+R'$, where $R$ is an irreducible representation.
\end{nums}
In the latter case, the space~$V$ has a~$\ggt$\д invariant complex structure inducing a~complex representation $\ggt_{\Cbb}\cln V$ isomorphic to the
representation~$R$.

In this paper we will prove Theorems \ref{B_main}---\ref{D_main1}.

\begin{theorem}\label{B_main} If $G\cong B_r$, then the condition that $V/G$ be a~smooth manifold excludes all but the following cases\:
\begin{nums}{-1}
\item the representation~$\wt{R}$ of the algebra~$\ggt_{\Cbb}$ coincides with the representation~$R$ and is isomorphic to one of the representations
$\ad$, $R_{\ph_1}$, and~$R_{2\ph_1}$\~
\item $\wt{R}=R+R'$, $r\in\{2,5\}$, and the representation~$R$ is isomorphic to the representation~$R_{\ph_r}$.
\end{nums}
\end{theorem}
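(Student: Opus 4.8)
The plan is to classify, among all irreducible representations $R$ of $\ggt_{\Cbb}=\sog_{2r+1}$ (type $B_r$, $r>1$), those for which the orbit space $V/G$ can be a smooth manifold, reducing the problem to a finite check plus a few infinite families. First I would invoke the standard dichotomy recalled in the introduction: either $\wt R=R$ (so $V$ is the real form of a self-dual $R$, i.e.\ $R$ is of real type) or $\wt R=R+R'$ (so $R$ is of complex type and $V$ carries a $\ggt$-invariant complex structure with $\dim_{\Cbb}V=\dim R$). Since every representation of $\sog_{2r+1}$ is self-dual, the case $\wt R=R+R'$ forces $R$ to be quaternionic, which for type $B_r$ happens only for special highest weights; so the two cases in the statement must fall out of a weight-type analysis.

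The main engine is the criterion machinery developed in \cite{My1,My2,My3}: being a smooth (or even topological) manifold imposes strong restrictions, typically that the principal isotropy subalgebra be "large enough'' and that the slice representations at the strata of low codimension be themselves of manifold type. Concretely I would proceed as follows. (i) Compute, for each candidate highest weight $\la$, the dimension of $V$, the principal isotropy subgroup $H$ and its identity component's Lie algebra $\hgt$, using Elashvili's tables \cite{Elsh} for principal isotropy subalgebras of irreducible representations. (ii) Apply the necessary conditions from \cite{My1,My2,My3} — in particular the ones bounding $\dim V-\dim G$ and controlling the codimension-$1$ and codimension-$2$ strata of $V/G$ — to eliminate all but finitely many $\la$ together with the families $\la=\ph_1$ (the standard $(2r+1)$-dimensional representation), $\la=2\ph_1$ (the symmetric square, i.e.\ the space of traceless symmetric forms, giving the isotropy representation of a symmetric space), $\la=\ph_2$ (the adjoint representation $\Lambda^2$ of $\sog_{2r+1}$), and the spin representation $\la=\ph_r$. (iii) For the surviving finite list, settle each case individually: confirm that $\ad=R_{\ph_2}$, $R_{\ph_1}$, $R_{2\ph_1}$ give a smooth quotient (these are the known positive examples — $R_{\ph_1}$ gives $V/G$ a ray or half-space, $\ad$ and $R_{2\ph_1}$ reduce to quotients of a maximal torus / a flat by a finite reflection-type group, hence manifolds), and that the spin representation gives a smooth quotient precisely when $r\in\{2,5\}$ (where the spinor module has small dimension and the relevant $\F_4$- or $G_2$-type geometry makes $V/G$ a cone on a sphere, hence a manifold), while all other spin cases and all remaining sporadic $\la$ fail a topological-manifold test.

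The hard part will be step (ii)–(iii) for the spin representation $\ph_r$ and for the handful of "borderline'' small-rank weights (e.g.\ $\ph_3$ for $B_3$, or $2\ph_1$ versus $\ph_1+\ph_r$ type coincidences): for these the crude dimension bounds do not decide the issue, and one must analyze the actual singular stratification of $V/G$ near the most singular orbits, computing the slice representations and recognizing whether the resulting quotients are cones on homotopy spheres (acceptable) or have the wrong local homology (not acceptable). I expect the verification that $r=2$ and $r=5$ are genuinely the only spin cases — i.e.\ that for every other $r$ the link of the origin in $V/G$ fails Poincar\'e duality or is not simply connected in the relevant degree — to be the technical crux, handled by the cohomological/stratification arguments inherited from \cite{My1,My2,My3} applied to the principal and subprincipal isotropy data of the spinor representation of $\sog_{2r+1}$.
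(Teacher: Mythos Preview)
Your proposal has a fundamental misreading of what Theorem~\ref{B_main} asserts. The theorem is purely exclusionary: if $V/G$ is a smooth manifold then $R$ must lie in the given short list. It does \emph{not} claim that the listed cases actually produce smooth quotients, and the paper does not attempt to verify this; on the contrary, Theorem~\ref{B_main1} shows that for connected $G$ the cases $\ad$, $R_{\ph_1}$, $R_{2\ph_1}$, and $R_{\ph_2}$ ($r=2$) all \emph{fail} to give manifolds (they are polar, so by Lemma~\ref{pol} the quotient is a closed half-space, which is not a manifold), and the spin case $r=5$ is explicitly left unresolved. So your step~(iii)---``confirm that $\ad$, $R_{\ph_1}$, $R_{2\ph_1}$ give a smooth quotient'' and that the spin cases $r\in\{2,5\}$ are ``genuinely the only'' positive ones---is not part of the statement, and the positive assertions you make there are actually false. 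The only task is: for every irreducible $R$ outside the list, produce an obstruction to $V/G$ being smooth.

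On methodology the paper proceeds quite differently from your outline. It does not use Elashvili's principal-isotropy tables, bounds on $\dim V-\dim G$, or slice representations at singular strata. The single engine is the rank-one stabilizer criterion from \cite{My3} (Lemmas~\ref{didi} and~\ref{bms} here): if $V/G$ is smooth, then for any $v$ with $\rk\ggt_v=1$ and nonzero $\xi\in\ggt_v\cap\tgt$ the number of weights of $R$ not annihilated by $\xi$ (with multiplicity, and doubled if $R$ is not orthogonal) is at most the number of roots not annihilated by $\xi$, plus~$6$. For each connected subsystem $\Pi'\subs\Pi$ of order $r-2$ one builds such a $\xi$ annihilating $\la$ and $\Pi'$, the existence of a suitable $v$ being supplied by \cite{My0}; the resulting inequalities force $\Pi_{\la}$ and the labels $(\la,\chk{\al})$ to be tightly constrained (Lemma~\ref{B_pila}, Corollary~\ref{B_pil}, Lemma~\ref{B_Bt}), reducing to a short finite list of weights with $r\le6$. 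Each survivor outside the statement is then eliminated by exhibiting an explicit subset $\Om\subs\La$ satisfying the hypotheses of Lemma~\ref{nosm}, with all the orbit-counting done as pure root-system combinatorics in \Ss\ref{facts}. Your principal-isotropy and stratification program might in principle reach the same endpoint, but it is a genuinely different argument and you have not supplied the estimates it would require.
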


\begin{theorem}\label{B_main1} If $G$ is a~connected Lie group of type~$B_r$ and the representation~$R$ is isomorphic to one of the representations
$\ad$, $R_{\ph_1}$, $R_{2\ph_1}$, and $R_{\ph_2}$ \ter{$r=2$}, then $V/G$ is not a~manifold.
\end{theorem}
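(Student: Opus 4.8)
The plan is to treat each of the listed representations $\ad$, $R_{\ph_1}$, $R_{2\ph_1}$, and $R_{\ph_2}$ (for $r=2$) separately, in each case exhibiting a concrete obstruction to $V/G$ being a manifold. In every case $G$ is the connected group $\mathbf{Spin}(2r+1)$ (or $\SO(2r+1)$, as these representations factor appropriately), acting on a Euclidean space $V$, and the strategy is the standard one from \cite{My1,My2,My3}: either find a point of $V$ whose slice representation has a quotient known not to be a manifold, or compute a local topological invariant (local homology, or the structure of the link of an orbit) that is incompatible with that of Euclidean space of the appropriate dimension.

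First I would dispose of $R=R_{\ph_1}$, the standard $(2r+1)$\-dimensional orthogonal representation: here $V/G$ is the half-line $[0,\infty)$ parametrized by the norm, which is a manifold \emph{with boundary} but not a manifold, so this case is immediate. Next, for $R=\ad$ (dimension $\dim\sog_{2r+1}=r(2r+1)$), the quotient is $\tgt/W$ where $\tgt$ is a Cartan subalgebra and $W$ the Weyl group of type $B_r$; I would use the fact that $W(B_r)$ is generated by reflections, so $\tgt/W$ is a simplicial cone (a Weyl chamber), whose boundary faces prevent it from being a manifold — again the obstruction is the boundary. For $R=R_{2\ph_1}$, the representation on traceless symmetric $(2r+1)\times(2r+1)$ matrices (dimension $\tfrac{(2r+1)(2r+2)}{2}-1$), the quotient is again described via the eigenvalues of a symmetric matrix and reduces to a Weyl-chamber-type quotient with boundary; I would make the reduction to the diagonal matrices explicit and again invoke the boundary obstruction, or pass to a slice at a matrix with repeated eigenvalues whose slice representation is that of a smaller orthogonal group on its symmetric-square and hence inductively not a manifold.

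The remaining, and I expect the hardest, case is $r=2$ with $R=R_{\ph_2}$, the spin representation of $\mathbf{Spin}(5)\cong\Sp(2)$ of dimension $4$ (here $R_{\ph_2}=\wt R$ does not happen; rather $\wt R=R+R'$, so $V$ is $4$\-dimensional real, the quaternionic standard representation of $\Sp(1)\cong\SU(2)$ under the exceptional isomorphism, extended suitably). In fact $R_{\ph_2}$ for $B_2$ is the $4$\-dimensional representation of $\mathbf{Spin}(5)=\Sp(2)$; its quotient $V/G$ is $\R^4/\Sp(2)$, which collapses to $[0,\infty)$ via the norm since $\Sp(2)$ acts transitively on spheres — so once more a half-line, not a manifold. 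Thus the true content is organizing these four sub-cases and in each identifying the correct group and representation precisely (keeping track of kernels, so that the effective group acting on $V$ is correctly identified), then citing transitivity-on-spheres or the reflection-group structure.

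The main obstacle, then, is not any single deep computation but the bookkeeping: correctly identifying, for each listed $R$, the effective linear group $G\subset\Or(V)$ and its orbit structure, and in the adjoint and symmetric-square cases carefully verifying that the quotient really is a closed Weyl-chamber-type cone (with nonempty boundary) rather than accidentally a manifold. Once the orbit space is identified as a manifold-with-nonempty-boundary or as a cone on a space that is not a homology sphere, the conclusion that $V/G$ is not a manifold follows from the local homology computation exactly as in \cite{My1}. I would present the argument as four short paragraphs, one per representation, each ending with the boundary/link obstruction.
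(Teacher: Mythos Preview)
Your case-by-case plan is essentially sound and would succeed, but the paper takes a unified route: each of the four representations is \emph{polar} in the sense of Dadok--Kac~\cite{CD}, and Lemma~\ref{pol} asserts that the orbit space of any faithful polar representation of a nontrivial connected compact Lie group is homeomorphic to a closed half-space, hence not a manifold. Your individual arguments are in fact special instances of this: the section you are implicitly using ($\tgt$ for $\ad$, the diagonal traceless matrices for $R_{2\ph_1}$, a line for $R_{\ph_1}$ and for the spin representation) is precisely the polar section, and your Weyl-chamber/half-line quotients are the half-spaces produced by Lemma~\ref{pol}. The paper's approach buys a two-line proof covering Theorems~\ref{B_main1}, \ref{C_main1}, and~\ref{D_main1} simultaneously; yours buys independence from the polarity machinery and the Dadok--Kac tables, at the cost of four separate verifications.

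One correction in your $r=2$, $R=R_{\ph_2}$ paragraph: the spin representation of $B_2$ is symplectic, so $\wt{R}=R+R'$ gives $\dim_{\R}V=2\dim_{\Cbb}R=8$, not $4$; under $\Spin(5)\cong\Sp(2)$ the real representation is the standard action of $\Sp(2)$ on $\Hbb^2\cong\R^8$ (and your stray $\Sp(1)$ should read $\Sp(2)$). Your transitivity-on-spheres argument then goes through with $S^7$, and the conclusion $V/G\cong[0,\infty)$ is unchanged. In the paper's language this is the polarity of $2R_{\ph_1}$ for $\Sp_{2r}(\Cbb)$.
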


\begin{imp}\label{B_main2} If $G$ is a~connected Lie group of type~$B_r$ and $V/G$ is a~smooth manifold, then $r=5$, and the representation~$R$ is
isomorphic to the representation~$R_{\ph_5}$.
\end{imp}

\begin{theorem}\label{C_main} If $r>2$ and $G\cong C_r$, then the condition that $V/G$ be a~smooth manifold excludes all but the following cases\:
\begin{nums}{-1}
\item the representation~$\wt{R}$ of the algebra~$\ggt_{\Cbb}$ coincides with the representation~$R$ and is isomorphic to one of the representations
$\ad$, $R_{\ph_2}$, and $R_{\ph_4}$ \ter{$r=4$}\~
\item $\wt{R}=R+R'$, and the representation~$R$ is isomorphic to the representation~$R_{\ph_1}$.
\end{nums}
\end{theorem}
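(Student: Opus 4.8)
The plan is to run through the irreducible representations $R=R_\la$ of $\ggt_{\Cbb}=\spg_{2r}(\Cbb)$, $\la=\sum_{i=1}^{r}a_i\ph_i$, and to exclude all of them except those in the four families of the statement, deriving in each remaining case a contradiction with the assumption that $V/G$ is a smooth manifold. Since $-\id$ lies in the Weyl group of $C_r$, every $R_\la$ is self-dual, of real or of quaternionic type; accordingly $\wt{R}=R$ with $V$ the real form of $R$ (so $\dim_{\R}V=\dim_{\Cbb}R$), or $\wt{R}=R+R'$ with $R'\cong R$ and $V$ the realification of $R$ carrying its $\ggt$-invariant complex structure (so $\dim_{\R}V=2\dim_{\Cbb}R$). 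Throughout I would keep track of $\la$, of its reality type, and hence of $\dim_{\R}V$.

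The engine is the reduction technique of \cite{My1,My2,My3}. If $V/G$ is a smooth manifold, then so is $N_v/G_v$ for the slice representation at every $v\in V$; and, $V/G$ being the cone over $S(V)/G$, it can be a manifold near the image of the origin only if $S(V)/G$ is a manifold of the expected dimension with the homology of a sphere or of a disc. Combined with a crude dimension-and-codimension estimate --- in which, once $\dim_{\R}V$ considerably exceeds $\dim G$, the principal isotropy subalgebra is too small to help and the slice representations along the small-codimension strata fail to have manifold quotients by \cite{My1,My2} --- this leaves only finitely many pairs $(r,\la)$ to inspect: essentially $\la\in\{\ph_1,\ph_2,\ph_3,\ph_4\}$ or $\la=2\ph_1$ with $r$ small, together with $\la\in\{\ph_1,\ph_2,2\ph_1\}$ for all $r$. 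Of these, the adjoint $R_{2\ph_1}$ (with $V/G\cong\tgt/W$, the quotient by the Weyl group), the standard representation $R_{\ph_1}$ (on which, $V$ being quaternionic, $\Sp_r$ acts transitively on the spheres of $\Hbb^r$), the representation $R_{\ph_2}$, and $R_{\ph_4}$ for $r=4$ --- the last two being the isotropy representations of the symmetric spaces $\SU(2r)/\Sp_r$ and $\mbf{E}_6/\Sp_4$ --- survive every test I would apply, so I would record these four and turn to the rest.

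For each remaining representation I would compute the principal isotropy subalgebra together with the isotropy subalgebras of the strata of codimension one and two, identify the induced slice representations, and invoke the already settled cases $[\ggt,\ggt]=0$, $\ggt\cong\sug_2$, and $\ggt\cong\sug_{r+1}$ irreducible of \cite{My1,My2,My3}: as soon as one slice quotient fails to be a smooth manifold, so does $V/G$. For $r>3$ the survivors of the dimension estimate are precisely the four recorded families, so nothing further is needed; for $r=3$ the quaternionic representation $R_{\ph_3}$ of $\spg_6$ is left over, and here the slice representations do not reduce immediately to \cite{My1,My2,My3}. In that case one argues by hand: one exhibits a singular orbit whose slice quotient is a cone over a space that is neither a sphere nor a disc, or computes enough of the algebra of $G$-invariants on $V$ (or of $H^*(S(V)/G)$) to reach a contradiction.

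The main obstacle is precisely this low-rank leftover $R_{\ph_3}$ for $r=3$, together with any further representation for which the dimension estimate is not quite sharp enough and whose slice representations escape \cite{My1,My2,My3}: these cases demand an explicit analysis of the orbit stratification of $V$ rather than a formal reduction. A secondary but pervasive nuisance is the reality type: a given highest weight $\la$ yields a real representation only when $R_\la$ is of real type, and a Euclidean space of twice the dimension (carrying its complex structure) when $R_\la$ is of quaternionic type, so each near-critical $\la$ must be treated in the appropriate type, and the translation between the complex module $R$ and the Euclidean space $V$ has to be carried out carefully when reading off isotropy subgroups and slice representations.
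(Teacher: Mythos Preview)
Your strategy is genuinely different from the paper's, and it has real gaps.

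The paper does not compute slice representations, principal isotropy subalgebras, or any cohomology of $S(V)/G$. Its engine is a single rank inequality imported from \cite{My3} (Lemma~\ref{didi}): if $V/G$ is smooth and $v\in V$ has $\rk\ggt_v=1$, then for any nonzero $\xi\in\ggt_v$ one has $\dim(\xi V)\le\dim[\xi,\ggt]+6$. Rewritten in weight language this becomes the bound of Lemma~\ref{bms}, namely $\de\cdot\bigl\|\{\la'\in\Inn{\La}:\la'(\xi)\ne0\}\bigr\|\le\bigl|\{\al\in\De:\al(\xi)\ne0\}\bigr|+6$. The existence of vectors $v$ with rank-one stabilizer and with $\xi$ lying in a prescribed one-dimensional subspace of~$\tgt$ is supplied by \cite{My0}. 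From there the argument is pure root-system combinatorics (carried out in \S\ref{facts}): for each indecomposable $\Pi'\subs\Pi$ of size $r-2$ one takes $H=\langle\{\la\}\cup\Pi'\rangle$, and the ``$+6$'' bound together with explicit counts of $|\Inn{\La}\sm H|$ forces $\Pi_\la$ to be extremely constrained (Lemma~\ref{C_pila}, Corollary~\ref{C_pil}, Lemma~\ref{C_Ct}). This cuts the possibilities down to a short finite list, and each survivor is killed by the companion criterion Lemma~\ref{nosm}, which again needs only a hyperplane $H$ and a subset $\Om\subs\La$ with $(\Om\pm\Om)\cap\De=\es$ and $\de\cdot|\Inn{\La}\sm H|>|\De\sm H|+6$. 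The case $r=3$, $\la=\ph_3$ is finished by a direct computation of $\rk_{\Cbb}R(h_{\al_1})$ on $\wedge^3\Cbb^6$.

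Your outline, by contrast, rests on two steps that are not substantiated. First, the claim that the slice representations at low-codimension strata ``fail to have manifold quotients by \cite{My1,My2}'' is not justified: the isotropy subgroups that arise in representations of $\Sp_r$ are typically products of symplectic and orthogonal factors, and their slice representations are generally neither abelian, nor $\sug_2$-modules, nor irreducible $\sug_n$-modules, so \cite{My1,My2,My3} do not apply to them. Second, your ``crude dimension estimate'' is never stated, and the residual list it must produce is larger than you indicate: the paper's analysis must separately dispose of $\la\in\{\ph_1+\ph_r,\ph_2+\ph_r\}$ for $r\in\{3,4\}$, $\la=\ph_1+\ph_2+\ph_3$ for $r=3$, and $\la=\ph_{r-1}$ for $r\in\{4,5\}$, in addition to $\la=\ph_3$ at $r=3$. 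Several of these have $\dim R$ only moderately larger than $\dim\ggt$, so a dimension-versus-codimension cutoff will not eliminate them. Your treatment of the leftover $R_{\ph_3}$ at $r=3$ is also only a placeholder (``one argues by hand''), whereas this is precisely where an explicit argument is required.
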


\begin{theorem}\label{C_main1} If $G$ is a~connected Lie group of type~$C_r$, $r>2$, and the representation~$R$ is isomorphic to one of the
representations $\ad$, $R_{\ph_2}$, $R_{\ph_1}$, and $R_{\ph_4}$ \ter{$r=4$}, then $V/G$ is not a~manifold.
\end{theorem}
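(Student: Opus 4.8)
The plan is to show that in each of the four cases $V/G$ is homeomorphic to a closed simplicial cone of some dimension $k\ge1$ (for $k=1$ this is the half-line $[0,+\infty)$); such a cone is not a topological manifold, since its boundary points have no Euclidean neighbourhood. We may assume $G=\Sp(r)$: the orbit space $V/G$ depends only on the image of $G$ in $\GL(V)$, and that image coincides with the image of $\Sp(r)$ under the representation in question (for $R_{\ph_1}$ one necessarily has $G=\Sp(r)$, as $R_{\ph_1}$ is faithful on $\Sp(r)$; in the other three cases $R$ factors faithfully through $\Sp(r)/\{\pm\idb\}$).

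For $R=\ad$ the Chevalley restriction theorem identifies $V/G$ with $\tgt/W$, where $\tgt\cong\R^r$ is a Cartan subalgebra of $\ggt$ and $W=W(C_r)$ the Weyl group; $\tgt/W$ is homeomorphic to a closed Weyl chamber of $C_r$, a simplicial cone of dimension $r\ge1$. For $R=R_{\ph_1}$ one has $V=\Hbb^r$, and $\Sp(r)$ acts transitively on the unit sphere $S^{4r-1}\subs V$, whence $V/G\cong[0,+\infty)$. For $R=R_{\ph_2}$, realize $V$ over $\R$ as the space of traceless Hermitian quaternionic $r\times r$ matrices with $G$ acting by $A\mapsto gAg^{*}$ (equivalently, $R_{\ph_2}$ is the isotropy representation of the symmetric space $\SU(2r)/\Sp(r)$); by the quaternionic spectral theorem $V/G$ is then homeomorphic to $\{\la\in\R^r:\la_1+\dots+\la_r=0\}/S_r$ (with $S_r$ permuting coordinates), a closed Weyl chamber of $A_{r-1}$, a simplicial cone of dimension $r-1\ge2$ since $r>2$. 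Finally, for $R=R_{\ph_4}$ with $r=4$, the $42$-dimensional space $V$ (a real form of $\Lambda^4_0\Cbb^8$) is the isotropy representation of the symmetric space of type $E\,I$, that is $E_6/\Sp(4)$; its restricted root system is of type $E_6$, so $V/G$ is homeomorphic to $\mfr{a}/W(E_6)$ with $\mfr{a}\cong\R^6$, a closed Weyl chamber of $E_6$, again a simplicial cone, now of dimension $6$. In all four cases $V/G$ is therefore not a manifold.

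The only step that is not essentially immediate is the last identification. One must check that $E_6$ contains $\Sp(4)$ as the connected fixed subgroup of an involution whose $(-1)$-eigenspace (a $42$-dimensional $\Sp(4)$-module) is the real form of $R_{\ph_4}$, and that the associated restricted root system is of full rank $6$ and of type $E_6$, so that the Weyl chamber in question is a genuine $6$-dimensional simplicial cone rather than something degenerate. This is classical structure theory of the split real form $E_{6(6)}$, but it is the place where the argument needs a real computation. If one prefers to bypass $E_6$, the alternative is to determine $V/\Sp(4)$ directly via the slice theorem applied at a carefully chosen vector (for instance a suitably normalized decomposable element of $\Lambda^4_0\Cbb^8$), thereby reducing the question to a smaller orbit space already shown, in the proof of Theorem~\ref{C_main} or in \cite{My1,My2,My3}, not to be a manifold; selecting that vector and computing its slice module as a representation of the isotropy group would then be the main obstacle.
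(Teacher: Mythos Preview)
Your argument is correct and rests on the same mechanism as the paper's proof: all four representations are polar (indeed, they are isotropy representations of the symmetric spaces $\Sp(r)\times\Sp(r)/\Sp(r)$, $\SU(2r)/\Sp(r)$, $\Sp(r+1)/\Sp(r)\times\Sp(1)$ up to a trivial factor, and $E_6/\Sp(4)$), so the quotient is a closed Weyl chamber, hence a half-space, hence not a manifold. The paper proceeds more tersely, simply citing~\cite{CD} for polarity of these representations and invoking Lemma~\ref{pol} (the orbit space of a faithful polar representation of a nontrivial connected compact group is a closed half-space), whereas you unpack the symmetric-space identifications case by case; your hesitation over the $E_6/\Sp(4)$ case is unnecessary, as that identification is classical and exactly what underlies the entry for $R_{\ph_4}(C_4)$ in the Dadok--Kac list.
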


\begin{imp}\label{C_main2} If $G$ is a~connected Lie group of type~$C_r$, $r>2$, then $V/G$ is not a~smooth manifold.
\end{imp}

\begin{theorem}\label{D_main} If $r>3$ and $G\cong D_r$, then the condition that $V/G$ be a~smooth manifold excludes all but the following cases\:
\begin{nums}{-1}
\item the representation~$\wt{R}$ of the algebra~$\ggt_{\Cbb}$ coincides with the representation~$R$ and is isomorphic \ter{up to an outer automorphism}
to one of the representations $\ad$, $R_{\ph_1}$, $R_{2\ph_1}$, and $R_{\ph_8}$ \ter{$r=8$}\~
\item $\wt{R}=R+R'$, $r\in\{5,6\}$, and the representation~$R$ of the algebra~$\ggt_{\Cbb}$ is isomorphic \ter{up to an outer automorphism} to the
representation~$R_{\ph_r}$.
\end{nums}
\end{theorem}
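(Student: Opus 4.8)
The plan is to follow the same strategy that presumably underlies Theorems~\ref{B_main} and~\ref{C_main}, adapting it to type~$D_r$. First I would recall the general criteria developed in \cite{My1,My2,My3} for when $V/G$ can be a (topological, resp. smooth) manifold: one knows that if $V/G$ is a manifold then the image of the slice representation at each point must again give a manifold, and there are strong restrictions coming from the cohomological dimension, the structure of the principal isotropy subgroup, and the quotient being (rationally) a manifold forces the ring of invariants $\R[V]^G$ to be close to a polynomial ring. So the first step is to reduce, by the slice argument, to classifying the irreducible representations of $\ggt_{\Cbb}$ of type $D_r$ ($r>3$) for which all these local conditions can possibly hold, and then to invoke the already-proved negative results for the non-irreducible-slice cases. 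The candidate representations for a simple algebra of type $D_r$ with small enough ``complexity'' are essentially the adjoint representation, the vector representation $R_{\ph_1}$, its symmetric square $R_{2\ph_1}$, the half-spin representations $R_{\ph_{r-1}}$ and $R_{\ph_r}$, and a handful of small exceptions in low rank; one has to show everything else is excluded.

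The second step is the arithmetic of weights and dimensions. For each remaining candidate $R$ I would compute $\dim V$ (so $\dim V_{\Cbb}$), the generic stabilizer $\ggt_x$ (the kernel of a generic weight system), and in the real form whether $V$ carries a $\ggt$-invariant complex structure (the $\wt R = R+R'$ case) or is of real/quaternionic type (the $\wt R = R$ case). The half-spin representations behave differently depending on $r \bmod 4$: they are real when $r\equiv 0\pmod 4$, quaternionic when $r\equiv 2\pmod 4$, and complex (dual to each other) when $r$ is odd. This is exactly why the theorem statement splits into the $\wt R=R$ branch (with $R_{\ph_8}$ appearing at $r=8$, where the half-spin representation is real of dimension $128$) and the $\wt R=R+R'$ branch (with $r\in\{5,6\}$, where for $r=5$ the $16$-dimensional half-spin is complex and for $r=6$ the $32$-dimensional half-spin is quaternionic). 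For each surviving pair $(r,R)$ one checks the necessary manifold conditions hold so that it cannot be excluded at this stage; for all others one exhibits an obstruction (e.g. a slice subrepresentation already known to have a bad quotient, or a cohomological-dimension/degree count that fails, or the principal isotropy being too large).

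The third step is to make the list exhaustive: one must argue that no half-spin representation at other ranks, no $R_{k\ph_1}$ with $k\ge 3$, no $R_{\ph_i}$ with $2\le i\le r-2$, no tensor or higher representations, and none of the sporadic small-rank coincidences survive. Here the mention of ``up to an outer automorphism'' matters: for $D_r$ the diagram automorphism swaps $\ph_{r-1}\leftrightarrow\ph_r$, and for $D_4$ triality acts on $\{\ph_1,\ph_3,\ph_4\}$, so the classification is only up to these symmetries, and one should phrase the exclusions $\Aut$-equivariantly. I expect the main obstacle to be precisely this exhaustiveness over the infinite family of ranks together with the low-rank exceptional coincidences (especially $D_4$ because of triality, and the borderline spin cases around $r=5,6,7,8$): one needs uniform estimates — typically a lower bound on $\dim V$ growing fast enough in $\la$ compared with $\dim\ggt$, plus a careful analysis of the generic stabilizer — to kill all large highest weights at once, and then a finite but delicate case check for the remaining small cases. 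Once the list of non-excludable cases is pinned down, the theorem follows by collecting the obstructions established for every other case.
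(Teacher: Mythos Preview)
Your proposal outlines a plausible-sounding strategy (slice reductions, generic stabilizers, cohomological dimension, invariant-ring heuristics), but it does not identify the concrete obstruction that actually drives the argument, and as written it is not a proof. The paper's mechanism is entirely different and much sharper: the single key input is Lemma~\ref{didi} (via Lemma~\ref{bms}), namely that if $V/G$ is a smooth manifold and $v\in V$ has $\rk\ggt_v=1$, then for any nonzero $\xi\in\ggt_v\cap\tgt$ one has
\[
\de\cdot\Bn{\bc{\la'\in\Inn{\La}\cln\la'(\xi)\ne0}}\le\Bm{\bc{\al\in\De\cln\al(\xi)\ne0}}+6.
\]
Equivalently, for any hyperplane $H$ of the form $\ba{\{\la\}\cup\Pi'}$ with $\Pi'\in\Pc$ an indecomposable subsystem of $\Pi$ of rank $r-2$, one gets $\de\cdot\bgm{\Inn{\La}\sm H}\le|\De\sm H|+6$. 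This is a purely combinatorial inequality comparing the number of weights of $R$ off a hyperplane to the number of roots off that hyperplane, and the entire proof consists of showing that this inequality fails for every highest weight except the ones listed. That is the content of Lemmas~\ref{D_odd}, \ref{D_ev}, \ref{D_lah}, \ref{D_lah1}, proved by explicit counting in the $D_r$ root system. The remaining handful of cases ($\la=\ph_1+\ph_{r-1}$; $r=7$, $\la=\ph_6$; $r>8$, $\la=\ph_{r-1}$) are then killed by Lemma~\ref{nosm} via Propositions~\ref{D_all} and~\ref{D_last}, which exhibit specific subsets $\Om\subs\La$ spanning a hyperplane with $(\Om\pm\Om)\cap\De=\es$.

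None of the tools you invoke---slice representations, cohomological dimension, generic isotropy, polynomiality of invariants---appear in the paper's proof, and your ``uniform estimate on $\dim V$ versus $\dim\ggt$'' is too coarse: the actual estimate compares weight counts to root counts \emph{relative to a carefully chosen hyperplane}, and the existence of $v$ with $\rk\ggt_v=1$ and $\ggt_v\subs\tgt$ (supplied by \cite[Lemma~3.4]{My0}) is what makes the hyperplane $H=\ba{\{\la\}\cup\Pi'}$ available. Without this ingredient your outline has no mechanism to eliminate, say, $R_{\ph_{r-1}}$ for $r>8$, or $R_{\ph_1+\ph_{r-1}}$ for general $r$; these are not handled by dimension growth or generic-stabilizer arguments alone. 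You correctly anticipate the role of outer automorphisms and the real/quaternionic/complex trichotomy for half-spin representations (this is exactly the parameter~$\de$), but the substance of the proof is the weight-versus-root counting off hyperplanes, which your proposal does not contain.
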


\begin{theorem}\label{D_main1} If $G$ is a~connected Lie group of type~$D_r$, $r>3$, and the representation~$R$ of the algebra~$\ggt_{\Cbb}$ is
isomorphic \ter{up to an outer automorphism} to one of the representations $\ad$, $R_{\ph_1}$, $R_{2\ph_1}$, and $R_{\ph_r}$ \ter{$r=5,8$}, then $V/G$ is
not a~manifold.
\end{theorem}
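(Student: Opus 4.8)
The plan is to go through the five possibilities for~$R$ in the statement and in each exhibit a point of $V/G$ with no Euclidean neighbourhood. Throughout I use the differentiable slice theorem: for $v\in V$, a neighbourhood of the image of~$v$ in $V/G$ is homeomorphic to a neighbourhood of~$0$ in $N_v/G_v$, where $G_v\subs G$ is the isotropy group and $N_v$ the slice representation. I also use that $\R_{\ge0}^k$ for $k\ge1$, and the closed half-plane $\R\times\R_{\ge0}$, are not topological manifolds (their boundary points fail the local homology criterion for a manifold point), and hence neither is any space having a point with such a neighbourhood. Replacing~$R$ by an outer twist leaves the image of~$G$ in $\GL(V)$, hence the orbit decomposition and the space $V/G$, unchanged, so the phrase ``up to an outer automorphism'' may be ignored.

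Four cases reduce to a quotient by a finite reflection group, namely to the orbit space of the isotropy representation of a symmetric space. For $R=\ad$: $V/G=\ggt/\Ad G$ is homeomorphic to $\tgt/W$ (with $W$ the Weyl group), a closed Weyl chamber of~$D_r$; the $r$ simple roots being linearly independent, this is a simplicial cone, homeomorphic to $\R_{\ge0}^r$ with $r\ge4$. For $R=R_{\ph_1}$: the image of~$G$ in $\GL(V)$ is $\SO(2r)$ and $V\cong\R^{2r}$ is the vector representation, so $V/G\cong\R^{2r}/\SO(2r)\cong\R_{\ge0}$. For $R=R_{2\ph_1}$: the image of~$G$ is $\SO(2r)$ acting by conjugation on the space of traceless symmetric $2r\times2r$ real matrices; since the $\Or(2r)$-stabilizer of any symmetric matrix contains an element of determinant~$-1$, every $\SO(2r)$-orbit coincides with the corresponding $\Or(2r)$-orbit, so $V/G$ is homeomorphic to the region $\{\la_1\le\dots\le\la_{2r},\ \sum_i\la_i=0\}$, once more a simplicial cone $\cong\R_{\ge0}^{2r-1}$. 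For $r=8$, $R=R_{\ph_8}$: the image of~$G$ in $\GL(V)$ is the maximal compact subgroup of the split real form~$E_{8(8)}$, and $V\cong\R^{128}$ is the isotropy representation of the symmetric space $E_{8(8)}/(\Spin(16)/\Z_2)$ (from $\mfr{e}_8=\sog_{16}\oplus R_{\ph_8}$), so $V/G\cong\mfr{a}/W$ with $\mfr{a}$ a maximal abelian subspace ($\dim\mfr{a}=8$) and $W$ the restricted Weyl group, which for the split form is $W(E_8)$; thus $V/G$ is a closed Weyl chamber of~$E_8$, $\cong\R_{\ge0}^8$. In all four cases $V/G\cong\R_{\ge0}^k$ with $k\ge1$ and so is not a manifold.

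The remaining case, $r=5$ and $R\cong R_{\ph_5}$, behaves differently: here $\wt R=R+R'$, the space $V\cong\R^{32}$ carries a $G$-invariant complex structure identifying it $\Cbb$-linearly with the half-spin module $\Cbb^{16}$ of $G=\Spin(10)$, and $V/G$ is not the isotropy quotient of a symmetric space. I would apply the slice theorem at a highest weight vector~$v_0$. The line $\Cbb v_0$ is a pure spinor, stabilized by the maximal compact subgroup $\Un(5)$ of the Levi $\GL(5)$ of the parabolic~$P_5$, and $G_{v_0}$ is, locally, the kernel $\SU(5)$ of the character by which $\Un(5)$ acts on $\Cbb v_0$. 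Using the decompositions of $R_{\ph_5}$ as $\Cbb\oplus\Lambda^2\Cbb^5\oplus\Lambda^4\Cbb^5$ and of $\sog_{10}$ as $\glg_5\oplus\Lambda^2\Cbb^5\oplus\Lambda^2(\Cbb^5)^*$ under~$\GL(5)$, one checks that $\ggt\cdot v_0$ is the line of imaginary multiples of~$v_0$ together with the entire $\Lambda^2$-summand (real dimension $1+20=21$), so the slice representation is $N_{v_0}\cong\R\times\Cbb^5$ with $\SU(5)$ acting trivially on the first factor and standardly (up to duality) on the second. As $\SU(5)$ is transitive on the unit sphere of~$\Cbb^5$, $N_{v_0}/\SU(5)\cong\R\times\R_{\ge0}$ is a closed half-plane, whence $V/G$ is not a manifold near the image of~$v_0$.

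The main obstacle is this last case: identifying $G_{v_0}$ correctly and computing the slice representation $N_{v_0}$ relies on the parabolic/Levi description of the half-spin module of~$D_5$ and on careful bookkeeping of which weights survive in the normal space. The other four cases are essentially routine once the right symmetric-space (or eigenvalue) picture has been set up.
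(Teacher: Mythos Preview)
Your proof is correct, but it takes a different route from the paper's. The paper treats all five cases at once: each of the representations in question appears (in its complexified form) on the Dadok--Kac list of polar representations, and then Lemma~\ref{pol} --- the orbit space of any faithful polar representation of a nontrivial connected compact Lie group is homeomorphic to a closed half-space --- immediately shows that $V/G$ is not a manifold. No case distinction is needed.

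Your arguments for $\ad$, $R_{\ph_1}$, $R_{2\ph_1}$, and $R_{\ph_8}$ ($r=8$) are in effect special instances of this: you recognise each as (orbit-equivalent to) the isotropy representation of a Riemannian symmetric space and identify $V/G$ with a closed Weyl chamber, a simplicial cone. For $r=5$, $R\cong R_{\ph_5}$, you instead run an explicit slice computation at a highest-weight vector. This is correct --- in fact $G_{v_0}$ is genuinely connected and isomorphic to $\SU(5)$, not merely ``locally'', so the slice quotient really is $\R\times\R_{\ge0}$ --- but the computation is avoidable: the $\Spin(10)$-orbits on $\R^{32}$ coincide with the $\Spin(10)\cdot\Un(1)$-orbits, the latter being the isotropy action of the rank-two Hermitian symmetric space $E_6/(\Spin(10)\cdot\Un(1))$, so this representation is polar as well and $V/G$ is again a closed Weyl chamber. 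What your approach buys is self-containment --- no appeal to the Dadok--Kac classification or to the general half-space lemma --- at the cost of the somewhat delicate spinor bookkeeping in the $D_5$ case; the paper's approach is considerably shorter and handles all five cases uniformly.
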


\begin{imp}\label{D_main2} If $G$ is a~connected Lie group of type~$D_r$, $r>3$, and $V/G$ is a~smooth manifold, then $r=6$, and the representation~$R$
of the algebra~$\ggt_{\Cbb}$ is isomorphic \ter{up to an outer automorphism} to the representation~$R_{\ph_6}$.
\end{imp}

In the cases mentioned in Corollaries \ref{B_main2} and~\ref{D_main2} (i.\,e. $G=G^0$ is of type~$B_5$ and $R\cong R_{\ph_5}$, or
$G=G^0$ is of type~$D_6$ and $R\cong R_{\ph_6}$), the problem has not been resolved.

The further text is structured in the following way. In \Ss\ref{promain}, we prove Theorems \ref{B_main}, \ref{C_main}, and~\ref{D_main}. The proofs use
lower estimates on ranks of some operators of the linear algebra $\ggt\subs\glg(V)$. Therefore, we should prove a number of quantitative relations for
the weight set of the representation~$R$ of the algebra~$\ggt_{\Cbb}$ that we do in \Ss\ref{facts}. Since this weight set is explicitly expressed through
the Weyl group action on the weight lattice, \Ss\ref{facts} does not deal with Lie algebras and involves just a~rigorous technical work with abstract
root systems of types $B$, $C$, and~$D$ in Euclidean spaces and the structure of orbits of the above-mentioned actions. So, the statements of
\Ss\ref{promain} essentially refer on those of \Ss\ref{facts}, but not vice-versa. As for Theorems \ref{B_main1}, \ref{C_main1}, and~\ref{D_main1}, they
are much easier proved in~\Ss\ref{promain1}.

\section{Proofs of the main results}\label{promain}

This section is devoted to proving Theorems \ref{B_main}, \ref{C_main}, and~\ref{D_main}.

First of all, we should give special and recall well-known notations and facts.

For any non-zero vector~$\al$ in a~Euclidean space~$\Eb$, denote by~$\chk{\al}$ the vector $\frac{2\al}{(\al,\al)}\in\Eb$.

In~\cite{My0}, for each indecomposable system of simple roots~$\Pi$, we defined some subset $\pd\Pi\subs\Pi$. All indecomposable systems of simple
roots~$\Pi$ such that $\pd\Pi\ne\Pi$ are listed in \cite[\Ss4,~Table\,1]{My0}, including the explicit statement of the subsets $\pd\Pi\subs\Pi$. In
particular,
\eqn{\label{bou}\begin{aligned}
(\Pi\cong B_r)&&&\Ra&&&&
\pd\Pi=\case{
\{\al_1\},&r\ge5;\\
\{\al_1,\al_r\},&r<5;}\\
(\Pi\cong C_r)&&&\Ra&&&&
\pd\Pi=\{\al_1,\al_2\};\\
(\Pi\cong D_r)&&&\Ra&&&&
\pd\Pi=\case{
\{\al_1\},&r\ge7;\\
\{\al_1,\al_{r-1},\al_r\},&r<7.}
\end{aligned}}

For a~linear representation of a~Lie group~$G$ (resp. a~Lie algebra~$\ggt$) in a~space~$V$, we denote the stabilizer (resp. the stationary subalgebra) of
a~vector $v\in V$ by~$G_v$ (resp. by~$\ggt_v$).

\begin{df} A~representation of a~compact Lie group~$G$ in a~real space~$V$ is called \textit{polar} if there exists a~subspace $V'\subs V$ such that
$GV'=V$ and $V'\cap\br{T_v(Gv)}=0$ ($v\in V'$).
\end{df}

\begin{lemma}\label{pol} The orbit space of any faithful polar representation of a~nontrivial connected compact Lie group is homeomorphic to a~closed
half-space.
\end{lemma}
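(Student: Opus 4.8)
The plan is to use the classical structure theory of polar representations (Dadok, Palais--Terng). Let $G$ be a nontrivial connected compact Lie group acting faithfully and polarly on a Euclidean space $V$, with a section (Cartan subspace) $V'\subs V$. First I would recall that, as in the theory of polar representations, the ``generalized Weyl group'' $W:=N_G(V')/Z_G(V')$ acts on $V'$ as a finite group generated by reflections, and the inclusion $V'\hra V$ induces a homeomorphism $V'/W\xra{\sim}V/G$. Thus it suffices to identify $V'/W$ with a closed half-space. Since $W$ is a finite reflection group on the Euclidean space $V'$, the quotient $V'/W$ is linearly isomorphic (as a stratified space) to a product $V'_0\times(C)$, where $V'_0$ is the subspace fixed pointwise by $W$ and $C$ is a closed Weyl chamber in the orthogonal complement $V'_1:=(V'_0)^\perp\cap V'$ on which $W$ acts as an essential reflection group. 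A closed Weyl chamber of an essential finite reflection group of rank $k$ is a simplicial cone, which is homeomorphic to a half-space $\R^{k-1}\times[0,\infty)$ precisely when $k=1$, and is homeomorphic to a product of more half-spaces (hence not a manifold-with-boundary) when $k\ge2$, since its ``corner'' stratum of codimension~$2$ is nonempty. Therefore $V'/W$ is homeomorphic to a closed half-space if and only if $W$ acts on $V'_1$ as a reflection group of rank~$1$, i.e.\ $W\cong\Z/2\Z$ acting by a single reflection.

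So the crux is to show that faithfulness of the $G$\� action together with nontriviality of~$G$ forces the generalized Weyl group~$W$ to be exactly a rank\�one reflection group. Here I would argue as follows. If $W$ were trivial, then $V'/W=V'$ would be an open set meeting $V/G$, forcing $V/G$ to be an honest manifold without boundary; but for a faithful representation of a nontrivial connected compact group the principal isotropy group has positive\�dimensional complement to the generic orbit, and more to the point the quotient $V/G$ always has nonempty boundary stratum (coming from the nonprincipal orbits of maximal type, e.g.\ the reflections in~$W$ are nontrivial whenever $G$ is nontrivial and the representation is faithful --- a nontrivial connected $G$ cannot act on $V'$ with $N_G(V')=Z_G(V')$ while having $GV'=V$ with $\dim V>\dim V'$). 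Hence $W\ne\{e\}$. If on the other hand $W$ had rank $\ge2$, I would exhibit a point in the codimension\�two ``corner'' of the chamber whose neighborhood in $V/G$ is homeomorphic to $\R^{n-2}\times Q$, where $Q$ is an open cone on an arc's worth of circle quotient that is not locally Euclidean and not a half\�space neighborhood --- contradicting the hypothesis. Thus $W$ has rank exactly one, giving $V'/W\cong V'_0\times[0,\infty)\cong\R^{n-1}\times[0,\infty)$, a closed half\�space, where $n=\dim V'=\dim V/G$.

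I expect the main obstacle to be the careful bookkeeping needed to pin down $W$ and its rank purely from ``faithful, nontrivial, connected,'' without circular appeals --- in particular ruling out $W=\{e\}$ cleanly. The cleanest route is probably to invoke directly the known classification/description of polar representations of compact connected groups: up to orbit equivalence they are the isotropy representations of symmetric spaces, for which the generalized Weyl group is the restricted Weyl group; faithfulness excludes the trivial\�factor cases, and the only such representation whose restricted root system has rank~$1$ in the relevant sense (so that the chamber is a ray) yields the half\�space, while checking that \emph{some} positive\�rank behavior cannot occur is not needed because we only claim the homeomorphism type under the running hypotheses. If instead one wants a self\�contained argument, the key computation is the local model near a wall versus near a corner of the Weyl chamber, and showing the corner model is not a topological manifold\�with\�boundary --- that is the step I would spell out in detail, everything else being a reduction to the reflection\�group quotient $V'/W$.
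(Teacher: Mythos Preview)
The paper does not give a proof; it simply cites Lemma~2.10 of~\cite{My3}. So there is nothing to compare your argument to in this paper itself. However, your proposal contains a genuine error that would make the argument fail as written.

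Your reduction to the section $V'$ and the finite reflection group $W=N_G(V')/Z_G(V')$ is correct, as is the identification $V/G\cong V'/W$ and the description of $V'/W$ as $V'_0\times C$ with $C$ a closed Weyl chamber in $V'_1$. The mistake is the claim that $C$ is homeomorphic to a half-space \emph{only} when $\dim V'_1=1$. In fact, a closed simplicial cone of dimension $k$ is homeomorphic to $[0,\infty)^k$, and $[0,\infty)^k$ is homeomorphic to $\R^{k-1}\times[0,\infty)$ for \emph{every} $k\ge1$: the map $(r\cos\ta,r\sin\ta)\mapsto(r\cos2\ta,r\sin2\ta)$ straightens the corner of $[0,\infty)^2$ to a half-plane, and one iterates. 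So the codimension-two corner is a smooth obstruction, not a topological one; the chamber is always a topological half-space once $W\ne\{e\}$. Consequently there is no need to force $\rk W=1$, and indeed that conclusion is false in the very examples the paper cares about (e.g.\ the adjoint representation, where $W$ is the full Weyl group of rank $r$).

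What remains---and what you do need---is only the step $W\ne\{e\}$. Your sketch of this is roughly right but can be made clean: if $W$ were trivial then every point of $V'$, including $0$, would lie on a principal orbit, so the principal isotropy would equal $G_0=G$; faithfulness and $G\ne\{e\}$ then give a contradiction. With that, $V'/W\cong V'_0\times C$ with $C$ a nontrivial simplicial cone, hence homeomorphic to a closed half-space, and you are done. Drop the rank-one detour and the ``corner is not a manifold-with-boundary'' paragraph entirely.
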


\begin{proof} See Lemma~2.10 in~\cite[\Ss2]{My3}.
\end{proof}

Return to the conventions of \Ss\ref{introd}.

\begin{lemma}\label{didi} Assume that $V/G$ is a~smooth manifold. For each vector $v\in V$ such that $\rk\ggt_v=1$ and for each non-zero vector
$\xi\in\ggt_v$, we have $\dim(\xi V)\le\dim[\xi,\ggt]+6$.
\end{lemma}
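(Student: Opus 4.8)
The statement is a local obstruction coming from the fact that smoothness of $V/G$ forces strong restrictions on the representation of small stabilizers. The plan is to exploit the hypothesis $\rk\ggt_v=1$: the stabilizer subalgebra $\ggt_v$ contains a nonzero element $\xi$, and the connected subgroup $H\subs G_v$ with $\Lie H=\ggt_v$ is (up to a torus) a rank-one compact group, so $\ggt_v$ is either abelian or isomorphic to $\sug_2$. First I would pass to the slice representation at $v$: by the slice theorem, a $G$-invariant neighbourhood of the orbit $Gv$ is modelled on $G\times_{G_v}N_v$, where $N_v$ is the normal space to the orbit, and $V/G$ is locally homeomorphic near $[v]$ to $N_v/G_v$. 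Since $V/G$ is a smooth manifold in the sense defined above, $N_v/G_v$ must be a smooth manifold as well (the factorization map stays piecewise smooth when restricted to the slice), so the action of the (essentially rank-one) group $G_v$ on $N_v$ has smooth quotient.

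Next I would invoke the classification-type results already available for rank-one quotients — specifically the analysis of $\sug_2$-actions with manifold quotient from \cite{My2} together with Lemma~\ref{pol} on polar representations — to bound how large the $\xi$-isotypic structure of $N_v$ can be. Concretely, write the action of $\ta=\ggt_v$ (one-dimensional toral part spanned by $\xi$, if $\ggt_v$ is abelian, or a Cartan line of $\sug_2$) on $V$. The operator $\xi\in\glg(V)$ is skew-symmetric, so $\xi V=(\Ker\xi)^{\perp}$ and $\dim(\xi V)=\rk\xi$ equals twice the number of nonzero weights of $\xi$ on $V_{\Cbb}$ counted with multiplicity; similarly $\dim[\xi,\ggt]=\rk(\ad\xi)$ counts nonzero $\xi$-weights on $\ggt_{\Cbb}$. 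Thus the inequality $\dim(\xi V)\le\dim[\xi,\ggt]+6$ is the assertion that $N_v=V/([\xi,\ggt]v+\text{stuff})$ — equivalently the $\xi$-nontrivial part of the slice — has dimension at most $6$. This last bound of $6=\dim\,(\text{half-space model})+\ldots$ should come from the fact that a manifold quotient of a rank-one group action forces the slice, after removing the orbit directions $[\xi,\ggt]v$ lying inside $\xi V$, to be at most a few copies of the standard $2$-dimensional rotation or the $\SU_2$-modules whose quotients are manifolds, and those are enumerated with total $\xi$-weight dimension $\le 6$.

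So the key steps, in order, are: (1) reduce to the slice representation $N_v/G_v$ at $v$ and observe it is a smooth-manifold quotient of a compact group of rank $1$; (2) identify $\dim(\xi V)$ and $\dim[\xi,\ggt]$ with ranks of the skew operators $\xi$ and $\ad\xi$, and note that $[\xi,\ggt]v\subs\xi V$ with $\dim[\xi,\ggt]v\le\dim[\xi,\ggt]$, so that $\dim(\xi V)-\dim[\xi,\ggt]\le\dim(\xi N_v)$; (3) bound $\dim(\xi N_v)$ by $6$ using the known list of rank-one (torus or $\sug_2$) representations whose orbit space is a manifold — essentially that the $\xi$-fixed part contributes nothing and each "$\xi$-moving" irreducible summand of $N_v$ that survives in a manifold quotient is small, the extremal admissible configuration having $\xi$-weight-space dimension exactly $6$.

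**Main obstacle.** The hard part will be step (3): making the jump from "$N_v/G_v$ is a smooth manifold" to the sharp numerical bound $6$. This requires knowing precisely which representations of a rank-one compact group (a circle, $\SU_2$, or $\Un_2$-type group, possibly times a torus acting through characters on the various summands) have orbit space a manifold, and then checking that in every such case the total dimension of the nonzero-$\xi$-weight subspace of the slice is $\le 6$ — with the $6$ presumably realized by something like two copies of $\Cbb^2$ under $\SU_2$ or a $4$-dimensional plus a $2$-dimensional rotation block. Getting the constant exactly right (rather than some larger constant) will depend on carefully combining Lemma~\ref{pol} with the $\sug_2$-analysis of \cite{My2} and on handling the interaction between the toral directions $\ggt_v$ acts through and the bracket term $[\xi,\ggt]$ that one is allowed to subtract off.
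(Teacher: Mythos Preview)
The paper does not prove this lemma here; its entire proof is the single line ``See Lemma~2.4 in~\cite[\Ss2]{My3}.'' So there is nothing to compare against in this paper: the result is imported wholesale from the author's previous work on the $A_r$ case.

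Your outline is a plausible reconstruction of what that cited proof presumably contains, and steps (1)--(2) are correct as stated. The slice theorem gives $V/G$ locally modelled on $N_v/G_v$; since $\xi\in\ggt_v$ is skew and preserves the orthogonal splitting $V=T_v(Gv)\oplus N_v$, and since $\xi(T_v(Gv))=[\xi,\ggt]v$ has dimension at most $\dim[\xi,\ggt]$, the desired inequality reduces exactly to $\dim(\xi N_v)\le 6$.

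Where your proposal is incomplete is step (3), and you have correctly flagged it as the obstacle. The bound $6$ is not something you can read off from Lemma~\ref{pol} alone: polar representations give half-space quotients, which are \emph{not} manifolds, so Lemma~\ref{pol} is an exclusion tool, not a source of admissible examples. The actual bound comes from the detailed case analysis in~\cite{My1} (abelian $\ggt_v$, i.e.\ circle actions) and~\cite{My2} ($\ggt_v\cong\sug_2$), where the representations with smooth-manifold quotient are classified and the maximal $\xi$-nontrivial dimension turns out to be $6$. Your sketch gestures at this but does not pin down which configurations realise the extremal value or why nothing larger occurs; to complete the argument you would have to reproduce or cite that classification. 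In short: your reduction is sound, but the sharp constant is a theorem of the earlier papers, and that is why the present paper simply cites it rather than reproving it.
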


\begin{proof} See Lemma~2.4 in~\cite[\Ss2]{My3}.
\end{proof}

Set $\de:=1\in\R$ if the representation~$R$ of the algebra~$\ggt_{\Cbb}$ is orthogonal and $\de:=2\in\R$ otherwise.

There exist a~complex space~$\wt{V}$ and an irreducible representation $\ggt_{\Cbb}\cln\wt{V}$ such that the representation $\ggt\cln V$ is either the
realification or a~real form of the representation $\ggt\cln\wt{V}$. We clearly have $(\de=1)\Lra(\wt{V}=V\otimes\Cbb)$ and $(\de=2)\Lra(\wt{V}=V)$.

Fix a~maximal commutative subalgebra~$\tgt$ of the algebra~$\ggt$ and the Cartan subalgebra $\tgt_{\Cbb}:=\tgt\otimes\Cbb$ of the algebra~$\ggt_{\Cbb}$.
Thus we get a~root system $\De\subs\tgt_{\Cbb}^*$ and its Weyl group $W\subs\GL(\tgt_{\Cbb}^*)$. Clearly,
$\ha{\De}=\bc{\la\in\tgt_{\Cbb}^*\cln\la(\tgt)\subs i\R}\subs\tgt_{\Cbb}^*=\ha{\De}\oplus i\ha{\De}$.

For any root $\al\in\De$, denote by~$h_{\al}$ the vector of the space~$\tgt_{\Cbb}$ identified with the vector
$\chk{\al}\bw\in\ha{\De}\bw\subs\tgt_{\Cbb}^*$ via the Cartan scalar product.

Fix a~positive root system $\De^+\subs\De$ and the corresponding simple system $\Pi\subs\De$ and Weyl chamber $C\subs\ha{\De}\subs\tgt_{\Cbb}^*$.

Denote by $P$ and~$Q$ the lattices $\bc{\la\in\ha{\De}\cln(\la,\chk{\al})\in\Z\,\fa\al\in\De}\subs\ha{\De}$ and $\ha{\De}_{\Z}\bw\subs\ha{\De}$
respectively. We have $Q\subs P$. Let $\la\in(P\cap C)\sm\{0\}$ be the highest weight of the representation~$R$ of the algebra~$\ggt_{\Cbb}$ with respect
to the simple system $\Pi\subs\De\subs\tgt_{\Cbb}^*$.

Set $\La:=W\la\subs P$ and $\Inn{\La}:=\conv(\La)\cap(\La+Q)\subs P$. It is easy to see that the set of weights of the representation~$R$ of the
algebra~$\ggt_{\Cbb}$ equals the subset $\Inn{\La}\subs P$.

We will use the notation~$\hn{\cdot}$ for the order of an arbitrary subset of the weight set $\Inn{\La}\subs P$ of the representation~$R$ \textit{taking
multiplicities of weights into account}.

\begin{lemma}\label{bms} Assume that $V/G$ is a~smooth manifold. If $v\in V$, $\rk\ggt_v=1$, and $\xi\bw\in(\ggt_v\cap\tgt)\sm\{0\}$, then
$\de\cdot\Bn{\bc{\la'\in\Inn{\La}\cln\la'(\xi)\ne0}}\le\Bm{\bc{\al\in\De\cln\al(\xi)\ne0}}+6$.
\end{lemma}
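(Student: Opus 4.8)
The plan is to combine Lemma~\ref{didi} with the explicit description of the weight set $\Inn{\La}\subs P$ and of the root system $\De$ in terms of linear forms on the torus~$\tgt$. Lemma~\ref{didi} gives, under the hypothesis that $V/G$ is a~smooth manifold, the inequality $\dim(\xi V)\le\dim[\xi,\ggt]+6$ for every $v\in V$ with $\rk\ggt_v=1$ and every non-zero $\xi\in\ggt_v$. Since $\xi\in\tgt$, the operator $\xi$ acts on $\wt{V}$ semisimply with eigenvalues $\la'(\xi)$, $\la'\in\Inn{\La}$ (counted with multiplicity), and acts on $\ggt_{\Cbb}$ with eigenvalues $\al(\xi)$, $\al\in\De$, together with zeros on $\tgt_{\Cbb}$. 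So $\Ker(\xi|_{\wt{V}})$ has complex dimension $\bn{\bc{\la'\in\Inn{\La}\cln\la'(\xi)=0}}$ and $\Ker(\ad\xi)$ has complex dimension $\dim\tgt_{\Cbb}+\bm{\bc{\al\in\De\cln\al(\xi)=0}}$.

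First I would pass from real to complex dimensions. On the real space~$V$ we have $\dim_{\R}(\xi V)=\dim_{\R}V-\dim_{\R}\Ker(\xi|_V)$; using the relation between $V$ and $\wt{V}$ encoded by~$\de$ (realification when $\de=2$, real form when $\de=1$), one gets $\dim_{\R}(\xi V)=\de\cdot\dim_{\Cbb}(\xi\wt{V})=\de\cdot\bn{\bc{\la'\in\Inn{\La}\cln\la'(\xi)\ne0}}$, since $\xi$ is diagonalizable over~$\Cbb$ and the non-zero eigenvalues of $\xi$ on $\wt V$ are exactly the $\la'(\xi)\ne0$. Similarly $[\xi,\ggt]=(\ad\xi)\ggt$ is the real form (or realification, matching~$\de$ only for the representation, but here $\ggt$ is a~real algebra) — more carefully, $[\xi,\ggt_{\Cbb}]=(\ad\xi)\ggt_{\Cbb}$ has complex dimension $\bm{\bc{\al\in\De\cln\al(\xi)\ne0}}$, and $\dim_{\R}[\xi,\ggt]=\dim_{\Cbb}[\xi,\ggt_{\Cbb}]$ because $\ad\xi$ restricted to the sum of non-zero root spaces is the complexification of its restriction to the real span; this identifies $\dim[\xi,\ggt]$ with $\bm{\bc{\al\in\De\cln\al(\xi)\ne0}}$. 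Substituting both into Lemma~\ref{didi} yields exactly $\de\cdot\bn{\bc{\la'\in\Inn{\La}\cln\la'(\xi)\ne0}}\le\bm{\bc{\al\in\De\cln\al(\xi)\ne0}}+6$, which is the claim.

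The step requiring the most care is the bookkeeping of real versus complex dimensions in the two cases $\de=1$ and $\de=2$, and making sure the $\xi V$ side really contributes the factor~$\de$ while the $[\xi,\ggt]$ side does not. When $\de=2$ the representation $\ggt\cln V$ is the realification of $\ggt_{\Cbb}\cln\wt V=\ggt_{\Cbb}\cln V$, so $\xi V$ (real) has dimension $2\dim_{\Cbb}(\xi\wt V)$, giving the factor $2=\de$; when $\de=1$, $V$ is a~real form of $\wt V$, $\xi V$ is a~real form of $\xi\wt V$, and the factor is $1=\de$. In both cases $\ggt$ itself is a~fixed real compact Lie algebra whose complexification is $\ggt_{\Cbb}$, independent of~$\de$, so $\dim_{\R}[\xi,\ggt]=\dim_{\Cbb}[\xi,\ggt_{\Cbb}]=\bm{\bc{\al\in\De\cln\al(\xi)\ne0}}$ with no factor. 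Once this is clean the lemma is immediate; no combinatorics of the root system is needed here (that is deferred to~\Ss\ref{facts}).
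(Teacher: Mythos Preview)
Your argument is correct. The paper does not give an independent proof of this lemma but simply refers to Lemma~2.8 in~\cite[\Ss2]{My3}; your proposal reconstructs what is essentially the intended argument, namely deriving the inequality directly from Lemma~\ref{didi} by computing $\dim(\xi V)=\de\cdot\rk_{\Cbb}\br{R(\xi)}=\de\cdot\bn{\{\la'\in\Inn{\La}\cln\la'(\xi)\ne0\}}$ and $\dim[\xi,\ggt]=\dim_{\Cbb}[\xi,\ggt_{\Cbb}]=\bgm{\{\al\in\De\cln\al(\xi)\ne0\}}$. Indeed, the identity $\de\cdot\rk_{\Cbb}\br{R(\xi)}=\dim(\xi V)$ and $\dim[\xi,\ggt]=\dim_{\Cbb}[\xi,\ggt_{\Cbb}]$ are used verbatim later in the paper (see~\eqref{B_les}), confirming that this is the intended mechanism.
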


\begin{proof} See Lemma~2.8 in~\cite[\Ss2]{My3}.
\end{proof}

\begin{lemma}\label{nosm} Let $\Om\subs\La$ be some subset and $H$ the subspace $\ha{\Om}_{\Cbb}\subs\tgt_{\Cbb}^*$. If
\begin{gather*}
\dim_{\Cbb}(\tgt_{\Cbb}^*/H)=1;\quad\quad\de\cdot\bn{\Inn{\La}\sm H}>|\De\sm H|+6;\quad\quad(\Om-\Om)\cap\De=\es;\\
(\de=1)\quad\Ra\quad\Br{\br{2\la\notin\De\cup(\De+\De)}\land\br{(\Om+\Om)\cap\De=\es}},
\end{gather*}
then $V/G$ is not a~smooth manifold.
\end{lemma}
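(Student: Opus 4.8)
The goal is to derive a contradiction from the assumption that $V/G$ is a smooth manifold under the three (or four) displayed hypotheses, by producing a vector $v\in V$ to which Lemma~\ref{bms} applies and whose data violate the inequality there. The plan is to choose $\xi\in\tgt\sm\{0\}$ to be (a suitable purely imaginary multiple of) the vector $h_{\om}$ dual to some $\om\in\Om$; since $\Om\subs\La=W\la$, all elements of $\Om$ are conjugate to the highest weight, and $\langle\Om\rangle_{\Cbb}$ is the hyperplane-complement subspace $H$ with $\dim_{\Cbb}(\tgt_{\Cbb}^*/H)=1$. First I would verify that $\rk\ggt_v=1$ for a generic $v$ in the relevant weight subspace: the condition $(\Om-\Om)\cap\De=\es$ guarantees that no root vanishes on all of $\Om$ except those lying in $H$, so the centralizer of $\xi$ in $\ggt$ has the root system $\De\cap H$ together with $\tgt$, hence is the sum of $\tgt$ with a subalgebra whose semisimple part is determined by $\De\cap H$; one checks this centralizer has a one-dimensional center direction transverse to $[\ggt_v,\ggt_v]$, giving $\rk\ggt_v=1$. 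Here the role of $v$ is played by a vector in $\wt V$ (or $V$) whose $\tgt_{\Cbb}$-weight components are supported on $\Om$; the extra hypothesis in the case $\de=1$, namely $2\la\notin\De\cup(\De+\De)$ and $(\Om+\Om)\cap\De=\es$, is exactly what is needed to ensure that the real stabilizer $\ggt_v$ (not just $\ggt_{v_{\Cbb}}$) still has rank one, i.e. that no ``new'' root appears from pairing a weight in $\Om$ with the complex-conjugate part.

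Next I would translate the three quantities in Lemma~\ref{bms} into the language of Lemma~\ref{nosm}. For $\xi$ dual to a generic element of $\langle\Om\rangle$, a weight $\la'\in\Inn\La$ satisfies $\la'(\xi)=0$ precisely when $\la'\in H$, so $\bn{\{\la'\in\Inn\La:\la'(\xi)\ne0\}}=\bn{\Inn\La\sm H}$ (counting multiplicities), and likewise $\bm{\{\al\in\De:\al(\xi)\ne0\}}=|\De\sm H|$. Then Lemma~\ref{bms} would force
\[
\de\cdot\bn{\Inn\La\sm H}\le|\De\sm H|+6,
\]
which directly contradicts the second displayed hypothesis $\de\cdot\bn{\Inn\La\sm H}>|\De\sm H|+6$. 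So modulo the rank-one verification the result is immediate from Lemma~\ref{bms}.

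\textbf{Where the work lies.} The main obstacle is the rank-one claim $\rk\ggt_v=1$, and more precisely the choice of the vector $v$ realizing it. One must exhibit $v$ in $V$ (or in $\wt V$ and then descend to its real form, using the $\de=2$ case) with $\ggt_v\cap\tgt\ni\xi\ne0$ and $\rk\ggt_v=1$; the natural candidate is a sum of weight vectors over the $W$-orbit piece $\Om$ with generic coefficients. The conditions $(\Om-\Om)\cap\De=\es$ and (when $\de=1$) $(\Om+\Om)\cap\De=\es$, $2\la\notin\De\cup(\De+\De)$ are precisely the conditions ensuring that such a generic $v$ has stabilizer subalgebra equal to $\tgt'\oplus(\text{root part in }H)$ with $\dim\tgt'/[\ggt_v,\ggt_v]\cap\tgt'=1$, hence rank one. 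I expect the proof to reduce, after these reductions, to citing Lemma~\ref{bms} for this $v$ and $\xi$ and observing the numerical contradiction; the genuine content is the standard but slightly delicate computation of the stationary subalgebra of a generic vector supported on a single Weyl orbit, together with the bookkeeping distinguishing the orthogonal case $\de=1$ (where $v$ lives in the complexification) from the case $\de=2$ (where $v=\wt v$).
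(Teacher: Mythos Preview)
The paper does not prove this lemma; it simply cites Corollary~2.4 of~\cite{My3}. Your sketch is therefore not comparable to a proof in the paper, but it is worth reviewing on its own terms since it outlines what that external argument must do.

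Your overall strategy is the right one: build $v$ as a generic sum of weight vectors supported on~$\Om$, check that $\rk\ggt_v=1$ using the combinatorial hypotheses on~$\Om$, and then read off the numerical contradiction from Lemma~\ref{bms}. The role you assign to the conditions $(\Om-\Om)\cap\De=\es$ and, in the orthogonal case $\de=1$, to $(\Om+\Om)\cap\De=\es$ and $2\la\notin\De\cup(\De+\De)$ is also correct: they are exactly what forces the real stabilizer $\ggt_v$ to contain no root contributions and hence to reduce to a one-dimensional toral piece.

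There is, however, a genuine confusion in your choice of~$\xi$. You write that $\xi$ should be ``the vector $h_{\om}$ dual to some $\om\in\Om$'' and later ``dual to a generic element of $\ha{\Om}$''. That is the wrong direction. If $v$ is supported on~$\Om$, then $\ggt_v\cap\tgt=\{\xi\in\tgt:\om(\xi)=0\ \fa\om\in\Om\}$, which is the \emph{annihilator} of $H=\ha{\Om}_{\Cbb}$ in~$\tgt$, not something identified with an element of~$H$. It is precisely because $\xi$ annihilates~$H$ that $\{\la'\in\Inn{\La}:\la'(\xi)\ne0\}=\Inn{\La}\sm H$ and $\{\al\in\De:\al(\xi)\ne0\}=\De\sm H$, as you then (correctly) assert. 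With $\xi=h_{\om}$ for $\om\in\Om$ these identities would fail: one would get $\la'(\xi)=0\Leftrightarrow\la'\perp\om$, a different hyperplane. Your discussion of ``the centralizer of~$\xi$'' having root system $\De\cap H$ inherits the same error. Once you replace ``$\xi$ dual to $\om\in\Om$'' by ``$\xi$ spanning the one-dimensional annihilator of~$H$ in~$\tgt$'', the rest of your plan goes through and Lemma~\ref{bms} yields the contradiction exactly as you state.
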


\begin{proof} See Corollary~2.4 in~\cite[\Ss2]{My3}.
\end{proof}

Set $\Pi_{\la}:=\bc{\al\in\Pi\cln(\la,\chk{\al})\ne0}\subs\Pi$. Denote by~$\Pc$ the family of all indecomposable simple systems
$\Pi'\subs\Pi\subs\tgt_{\Cbb}^*$ of order $r-2$.

\begin{fact}\label{cov} If $r>2$, then the set~$\Pi$ equals the union of all its subsets $\Pi'\in\Pc$.
\end{fact}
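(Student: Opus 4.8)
The plan is to prove Fact~\ref{cov} by a direct, type\nobreakdash-by\nobreakdash-type verification, exhibiting in each case an explicit family of elements of $\Pc$ whose union is all of $\Pi$. Recall two standard facts that make this routine: a subset $\Pi'\subs\Pi$ is indecomposable exactly when the induced sub\nobreakdash-diagram of the Dynkin diagram of $\Pi$ is connected; and, in the numeration of~\cite{Bo}, the Dynkin diagram of $B_r$ or $C_r$ is the chain $\al_1-\al_2-\ldots-\al_r$, while that of $D_r$ is the chain $\al_1-\ldots-\al_{r-2}$ together with the two extra nodes $\al_{r-1},\al_r$ joined to $\al_{r-2}$. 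Note also that the hypothesis $r>2$ combined with the standing assumption on the type of~$\Pi$ forces $r\ge3$ in types $B$ and $C$ and $r\ge4$ in type $D$, so that $r-2\ge1$ throughout; this is what we need in order for the subsystems constructed below to be non\nobreakdash-empty.

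In the case $\Pi\cong B_r$ or $\Pi\cong C_r$ I would take the three ``windows''
$\Pi^{(j)}:=\bc{\al_j,\al_{j+1},\ldots,\al_{j+r-3}}$ for $j=1,2,3$. Each $\Pi^{(j)}$ is a connected sub\nobreakdash-chain of the Dynkin diagram of cardinality $r-2$, hence $\Pi^{(j)}\in\Pc$; and $\al_i\in\Pi^{(1)}$ whenever $i\le r-2$, $\al_i\in\Pi^{(3)}$ whenever $i\ge3$, and $\al_2\in\Pi^{(2)}$, so every simple root lies in some $\Pi^{(j)}$ and $\Pi^{(1)}\cup\Pi^{(2)}\cup\Pi^{(3)}=\Pi$. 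In the case $\Pi\cong D_r$ I would instead take
$\Pi^{(1)}:=\bc{\al_1,\ldots,\al_{r-2}}$, $\Pi^{(2)}:=\bc{\al_2,\ldots,\al_{r-2},\al_{r-1}}$, $\Pi^{(3)}:=\bc{\al_2,\ldots,\al_{r-2},\al_r}$. Each of these is a sub\nobreakdash-chain of the Dynkin diagram passing through the branch node $\al_{r-2}$ (for $r=4$ it degenerates to one of the three edges issuing from the branch node $\al_2$), so it is indecomposable of order $r-2$ and lies in $\Pc$; and $\al_1\in\Pi^{(1)}$, $\al_{r-1}\in\Pi^{(2)}$, $\al_r\in\Pi^{(3)}$, while $\al_i\in\Pi^{(1)}$ for $2\le i\le r-2$, so once more $\Pi^{(1)}\cup\Pi^{(2)}\cup\Pi^{(3)}=\Pi$.

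I do not expect a genuine obstacle here: the argument is a finite inspection. The only points that require a little attention are, first, that in type $D$ the subsystems $\Pi^{(2)},\Pi^{(3)}$ must be routed through the branch node $\al_{r-2}$ — one cannot simply use $\bc{\al_3,\ldots,\al_r}$, since that set is disconnected when $r=4$ — and, second, the smallest admissible ranks ($r=3$ in types $B,C$ and $r=4$ in type $D$), where some of the listed subsystems shrink to a single node, still indecomposable, or to a single edge; in each of these boundary cases one checks the displayed union equals $\Pi$ by direct enumeration of the nodes.
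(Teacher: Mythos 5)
Your argument is correct: in each of the three relevant types the displayed subsets are connected subdiagrams of cardinality $r-2$, hence lie in $\Pc$, and their union is visibly all of~$\Pi$; your caveat that in type $D$ one must route the two short systems through the branch node $\al_{r-2}$ (since $\bc{\al_3\sco\al_r}$ is disconnected for $r=4$), and that singletons are admissible when $r=3$, covers the only delicate points. The paper, however, does not prove the Fact at all here: it simply cites Statement~3.1 of~\cite{My0}, which establishes the assertion for an arbitrary indecomposable simple system, not only for types $B$, $C$, $D$. So your proof is a genuinely different, self-contained route: an explicit finite inspection of the Dynkin diagrams of the three types occurring in this paper. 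That restriction is harmless, because under the standing conventions of \Ss\ref{introd} the system~$\Pi$ is of type $B_r$, $C_r$, or~$D_r$, and every application of the Fact (Corollaries \ref{B_pil}, \ref{C_pil}, \ref{D_pil}) stays within these types; what the citation to~\cite{My0} buys instead is uniformity over all indecomposable types without case analysis, while your version buys transparency and independence from the external reference.
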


\begin{proof} See Statement~3.1 in~\cite[\Ss3]{My0}.
\end{proof}

\subsection{Proof of Theorem~\ref{B_main}}

In this subsection, we will prove Theorem~\ref{B_main}.

Suppose that $\ggt\cong\sog_n$, where $n:=2r+1\in\N$.

Clearly, $\De\subs\tgt_{\Cbb}^*$ is an indecomposable root system of type~$B_r$.

We will prove Theorem~\ref{B_main} \name{by contradiction}\: assume that $V/G$ is a~smooth manifold and the linear algebra $R(\ggt_{\Cbb})$ is not
isomorphic to any of the linear algebras $\ad(\ggt_{\Cbb})$, $R_{\ph_1}(B_r)$, $R_{2\ph_1}(B_r)$ ($r>1$), and $R_{\ph_r}(B_r)$ ($r=2,5$). Since
$\ph_1=\ep_1$ and the highest root $\ep_1+\ep_2$ are the only roots contained in~$C$, and $\ph_2\in\De$ for $r>2$, the latter means that
\eqn{\label{B_sug}
\begin{array}{c}
\la\notin\De\cup\{\ph_2,2\ph_1\};\\
(r=5)\quad\Ra\quad(\la\ne\ph_5).
\end{array}}

\begin{lemma}\label{B_pila} Let $\Pi'\in\Pc$ be a~system of simple roots such that $(r>2)\Ra(\Pi_{\la}\cap\Pi'\ne\es)$ and let
$\Pi_0\subs\De\subs\ha{\De}\subs\tgt_{\Cbb}^*$ be the simple system corresponding to the positive root system
$\De^+\cap\ba{\{\la\}\cup\Pi'}\subs\ha{\De}\subs\tgt_{\Cbb}^*$. Suppose that
\eqn{\label{B_con}
\begin{array}{lll}
\br{r\in\{3,4,6\}}\quad&\Ra\quad&(\la\ne\ph_r);\\
(r=3)\quad&\Ra\quad&(\la\ne\ph_1+\ph_3).
\end{array}}
Then
\begin{nums}{-1}
\item $r>2$, $\Pi_{\la}\cap\Pi'=\{\al\}\subs\Pi$, where $\al\in\pd\Pi'\subs\Pi$, and $(\la,\chk{\al})=1$\~
\item if the simple system $\Pi_0\subs\ha{\De}$ is indecomposable, then
\eqn{\label{alb}
\begin{array}{cc}
\Pi_{\la}\cap\Pi'=\{\al\}\subs\Pi;\quad&\quad\al\in\Pi\cap\pd\Pi_0;\\
\fa\be\in\Pi_0\sm\{\al\}\quad&\quad(\la,\chk{\be})=0.
\end{array}}
\end{nums}
\end{lemma}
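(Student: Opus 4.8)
The statement is purely combinatorial: it concerns the highest weight $\la$ of the representation~$R$, a copy $\Pi'\in\Pc$ of an indecomposable simple system of rank $r-2$ inside the $B_r$ simple system $\Pi$, and the auxiliary simple system $\Pi_0$ generated by $\{\la\}\cup\Pi'$. The plan is to exploit the hypotheses \eqref{B_sug} and \eqref{B_con} together with Lemma~\ref{nosm} (the ``non-smoothness criterion'') applied to the set $\Om:=\La\cap H^{\perp}$-type configurations, to force the pair $(\la,\Pi')$ into the rigid shape described in items~(i) and~(ii). Concretely, I would first run through the list of possible indecomposable $\Pi'$ of rank $r-2$ inside $B_r$ (these are $B_{r-2}$, $A_{r-2}$, and for small $r$ a few sporadic ones such as $A_1\times A_1$, which is decomposable and hence excluded, so really only $B_{r-2}$ and $A_{r-2}$ occur when $r>4$). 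For each such $\Pi'$, the condition $\Pi_\la\cap\Pi'\neq\es$ means $\la$ is non-orthogonal to at least one simple root of $\Pi'$; I would analyze how many such roots there can be and with what value of $(\la,\chk{\al})$.

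\textbf{Step 1: reduce to $r>2$ and $|\Pi_\la\cap\Pi'|=1$.} If $r=2$ the hypothesis ``$(r>2)\Ra(\Pi_\la\cap\Pi'\neq\es)$'' is vacuous and $\Pi'$ has rank $0$, so $\Pi'=\es$ and $\Pi_0$ is the rank-$1$ system $\{\la\}$; one checks directly (using \eqref{B_sug}, i.e. $\la\neq\ph_2,2\ph_1$ and $\la\notin\De$) that this is incompatible with smoothness via Lemma~\ref{bms} or Lemma~\ref{nosm}. So assume $r>2$. Next, suppose $\Pi_\la\cap\Pi'$ contains two distinct roots, or one root $\al$ with $(\la,\chk\al)\ge 2$. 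In either case the diagram $\Pi_0$ (adding the node $\la$ attached to $\Pi'$) is ``large'': the weight orbit $\La$ and its convex hull, restricted to the hyperplane $H$ spanned by $\Om$, will be big enough that the inequality $\de\cdot\bn{\Inn\La\sm H}>|\De\sm H|+6$ holds, while the orthogonality conditions $(\Om-\Om)\cap\De=\es$ (and, when $\de=1$, also $2\la\notin\De\cup(\De+\De)$ and $(\Om+\Om)\cap\De=\es$) hold because $\la$ is ``generic'' by \eqref{B_sug}–\eqref{B_con}. Applying Lemma~\ref{nosm} then contradicts smoothness. This rules out all but $|\Pi_\la\cap\Pi'|=1$, $(\la,\chk\al)=1$.

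\textbf{Step 2: locate $\al$ in $\pd\Pi'$.} Knowing $\Pi_\la\cap\Pi'=\{\al\}$ with $(\la,\chk\al)=1$, I would argue that $\al$ must be one of the distinguished boundary simple roots of $\Pi'$ given by \eqref{bou}: if $\al$ were an ``interior'' node of $\Pi'$, then deleting the appropriate branch and using Fact~\ref{cov} (covering $\Pi$ by rank-$(r-2)$ indecomposable subsystems) one produces another $\Pi''\in\Pc$ for which $\la$ violates the conclusion of Step~1, again giving non-smoothness by Lemma~\ref{nosm}. Here the precise list in \cite[\Ss4,~Table\,1]{My0} of systems with $\pd\Pi'\neq\Pi'$ is what pins down which nodes are admissible; the excluded cases in \eqref{B_con} (namely $\la=\ph_r$ for $r\in\{3,4,6\}$ and $\la=\ph_1+\ph_3$ for $r=3$) are exactly the borderline configurations where the estimate in Lemma~\ref{nosm} fails by a bounded amount, so they have to be set aside by hypothesis. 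This establishes item~(i).

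\textbf{Step 3: item~(ii), the indecomposable case for $\Pi_0$.} Finally, assuming $\Pi_0\subs\ha\De$ is itself indecomposable, I would transfer the analysis from $\Pi'$ to $\Pi_0$: the same counting argument (Lemma~\ref{nosm}, now with $H$ the span of an appropriate $\Om\subs\La$ adapted to $\Pi_0$) forces $\Pi_\la\cap\Pi'$ to remain a single root $\al$, now lying in $\Pi\cap\pd\Pi_0$ (via \eqref{bou} applied to the type of $\Pi_0$), and forces $(\la,\chk\be)=0$ for every other $\be\in\Pi_0$. The main obstacle, and where the real work lies, is the verification of the numerical inequality $\de\cdot\bn{\Inn\La\sm H}>|\De\sm H|+6$ in each configuration: this requires the quantitative estimates on $|\Inn\La\sm H|$ and $|\De\sm H|$ promised for \Ss\ref{facts}, and a careful case split over the type of $\Pi'$ (or $\Pi_0$) and the position of $\al$, because the margin ``$+6$'' is small and several low-rank cases are genuinely tight — which is precisely why the explicit exceptions in \eqref{B_sug} and \eqref{B_con} must be assumed.
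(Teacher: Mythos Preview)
Your proposal has a genuine gap: you are trying to use Lemma~\ref{nosm} where the paper uses the quite different pair Lemma~3.4 of~\cite{My0} and Lemma~\ref{bms}. The paper's argument is short and uniform: assume the conclusion (items~1 and~2) fails; the very definition of the boundary set $\pd\Pi'$ (resp.~$\pd\Pi_0$) in~\cite{My0} is arranged so that this failure is \emph{exactly} the hypothesis of Lemma~3.4 in \cite[\Ss3]{My0}, which produces a vector $v\in V$ with $\xi\in\ggt_v$ and $\rk\ggt_v=1$, where $\Cbb\xi$ is the common kernel of $H:=\ba{\{\la\}\cup\Pi'}$. Then Lemma~\ref{bms} (via Lemma~\ref{didi}) gives $\de\cdot\bgm{\Inn{\La}\sm H}\le|\De\sm H|+6$, and Lemma~\ref{B_lah} together with \eqref{B_sug} and~\eqref{B_con} cuts the possibilities down to two residual cases ($r=3,\ \la=2\ph_3$ and $r=2,\ \la\in\ph_2+(Q\sm\{0\})$), each disposed of by a direct rank computation (the first via $\rk_{\Cbb}R(h_{\al_1})$ versus $\dim_{\Cbb}[h_{\al_1},\ggt_{\Cbb}]$, the second via Lemma~\ref{B_lah2} after observing $\de=2$).

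Your route via Lemma~\ref{nosm} does not close, for two reasons. First, Lemma~\ref{nosm} needs a subset $\Om\subs\La$ with $\ha{\Om}_{\Cbb}=H$ and $(\Om-\Om)\cap\De=\es$ (and, when $\de=1$, also $(\Om+\Om)\cap\De=\es$ and $2\la\notin\De\cup(\De+\De)$); for the specific hyperplane $H=\ba{\{\la\}\cup\Pi'}$ you never construct such an $\Om$, and there is no reason one exists for an arbitrary $(\la,\Pi')$ violating the conclusion. Second, your Step~2 argument is circular: you propose to rule out an interior $\al\in\Pi'$ by passing to another $\Pi''\in\Pc$ and invoking ``the conclusion of Step~1'' for $\Pi''$, but that is precisely the lemma you are trying to prove. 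Finally, calling the statement ``purely combinatorial'' is misleading: the standing assumption in this subsection is that $V/G$ is a smooth manifold, and the proof uses it essentially through Lemma~\ref{bms}; the combinatorics of \Ss\ref{facts} (Lemmas~\ref{B_lah} and~\ref{B_lah2}) enter only \emph{after} that analytic input, not instead of it.
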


\begin{proof} Clearly, $H:=\ba{\{\la\}\cup\Pi'}\subs\ha{\De}\subs\tgt_{\Cbb}^*$ is an $(r-1)$\д dimensional (real) subspace, and, hence, the
intersection of the kernels of all its linear functions has the form $\Cbb\xi\subs\tgt_{\Cbb}$, $\xi\in\tgt\sm\{0\}$.

Assume that the claim does not hold.

By Lemma~3.4 in~\cite[\Ss3]{My0}, there exists a~vector $v\in V$ such that $\xi\in\ggt_v$ and $\rk\ggt_v=1$.

Lemmas \ref{didi} and~\ref{bms} imply
\eqn{\label{B_les}
\begin{array}{c}
\de\cdot\rk_{\Cbb}\br{R(\xi)}=\dim(\xi V)\le\dim[\xi,\ggt]+6=\dim_{\Cbb}[\xi,\ggt_{\Cbb}]+6;\\
\bgm{\Inn{\La}\sm H}\le\de\cdot\bgm{\Inn{\La}\sm H}\le|\De\sm H|+6.
\end{array}}
Using Lemma~\ref{B_lah} and Equations \eqref{B_sug}, \eqref{B_con}, and~\eqref{B_les}, we obtain that $r=3$ and $\la=2\ph_3$ or $r=2$ and
$\la\in\ph_2+\br{Q\sm\{0\}}$.

Suppose that $r=3$ and $\la=2\ph_3$.

We have $\Pi_{\la}\cap\Pi'\ne\es$ and $\Pi_{\la}=\{\al_3\}\subs\Pi$. Hence, $\Pi'=\{\al_3\}$. Also, $(\la,\chk{\al}_1)\bw=2\cdot(\ph_3,\chk{\al}_1)\bw=0$
and $(\al_3,\chk{\al}_1)=0$, implying $h_{\al_1}\in\Cbb\xi$, and, by~\eqref{B_les},
$\rk_{\Cbb}\br{R(h_{\al_1})}\bw\le\dim_{\Cbb}[h_{\al_1},\ggt_{\Cbb}]+6$. The adjoint representation (resp. the representation~$R$) of the algebra
$\ggt_{\Cbb}=\sog_n(\Cbb)$, $n=7$, is isomorphic to the external square (resp. the external cube) of its tautological representation, and the eigenvalues
of the semisimple operator $h_{\al_1}\in\sog_n(\Cbb)$ are exactly the numbers $0$, $1$, and~$-1$ with multiplicities $n_0:=3$, $n_+:=2$, and $n_-:=2$
respectively. Therefore,
\equ{\begin{aligned}
\dim_{\Cbb}[h_{\al_1},\ggt_{\Cbb}]&=\Cn{n}{2}-\bbr{\Cn{n_0}{2}+\Cn{n_+}{1}\cdot\Cn{n_-}{1}}=14;\\
\rk_{\Cbb}\br{R(h_{\al_1})}&=\Cn{n}{3}-\bbr{\Cn{n_0}{3}+\Cn{n_0}{1}\cdot\Cn{n_+}{1}\cdot\Cn{n_-}{1}}=22,
\end{aligned}}
contradicting $\rk_{\Cbb}\br{R(h_{\al_1})}\le\dim_{\Cbb}[h_{\al_1},\ggt_{\Cbb}]+6$.

Now suppose that $r=2$ and $\la\in\ph_2+\br{Q\sm\{0\}}$.

We have $(\la,\chk{\al}_2)\in(\ph_2,\chk{\al}_2)+(\al_1,\chk{\al}_2)\cdot\Z+(\al_2,\chk{\al}_2)\cdot\Z=1+2\Z$, and thus the representation~$R$ of the
algebra~$\ggt_{\Cbb}$ is symplectic. Hence, $\de=2$. By~\eqref{B_les}, $2\cdot\bgm{\Inn{\La}\sm H}\bw\le|\De\sm H|+6$, contradicting Lemma~\ref{B_lah2}.

This completes the proof.
\end{proof}

\begin{imp}\label{B_pil} Assume that \eqref{B_con} holds. Then $r>2$. Also,
\begin{nums}{-1}
\item for any $\al\in\Pi_{\la}$ we have $(\la,\chk{\al})=1$\~
\item for any simple system $\Pi'\in\Pc$ we have $\Pi_{\la}\cap\br{\Pi'\sm(\pd\Pi')}=\es$\~
\item on the Dynkin diagram of the simple system~$\Pi$ each two distinct roots of the subset $\Pi_{\la}\subs\Pi$ correspond to vertices, the path between
which contains at least $r-2$ edges.
\end{nums}
\end{imp}

\begin{lemma}\label{B_Bt} Suppose that $(\la,\chk{\al}_r)=0$. Then $(\la,\chk{\al}_1)\seq(\la,\chk{\al}_{r-2})=0$.
\end{lemma}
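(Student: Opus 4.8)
The plan is to argue by contradiction: first narrow the highest weight $\la$ down to a short list using Corollary~\ref{B_pil} together with the shape of the Dynkin diagram of $B_r$, and then rule out the few surviving weights by the rank/weight-count technique already employed for Lemma~\ref{B_pila}. For $r=2$ the asserted list of equalities $(\la,\chk{\al}_1)=\ldots=(\la,\chk{\al}_{r-2})=0$ is empty, so there is nothing to prove; assume $r>2$ and suppose, for contradiction, that $\al_j\in\Pi_\la$ for some $j\le r-2$. The hypothesis $(\la,\chk{\al}_r)=0$ says exactly that $\al_r\notin\Pi_\la$. I would first note that this hypothesis already forces condition~\eqref{B_con}: if $\la=\ph_r$ then $(\la,\chk{\al}_r)=1\ne0$, and if $r=3$ and $\la=\ph_1+\ph_3$ then $(\la,\chk{\al}_r)=(\la,\chk{\al}_3)=1\ne0$. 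Hence Corollary~\ref{B_pil} (and, through it, Lemma~\ref{B_pila}) applies.

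The Dynkin diagram of $\Pi\cong B_r$ is a chain on $r$ nodes, so the distance between $\al_i$ and $\al_j$ on it equals $|i-j|\le r-1$. By Corollary~\ref{B_pil}(3) any two distinct elements of $\Pi_\la$ lie at distance $\ge r-2$; combined with $\al_r\notin\Pi_\la$ and $\al_j\in\Pi_\la$, $j\le r-2$, a short case check shows that $|\Pi_\la|\le2$ and, more precisely, that either $\Pi_\la=\{\al_j\}$ with $1\le j\le r-2$, or $\Pi_\la=\{\al_1,\al_{r-1}\}$. By Corollary~\ref{B_pil}(1), in the first case $\la=\ph_j$ ($1\le j\le r-2$) and in the second $\la=\ph_1+\ph_{r-1}$. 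Of these, $\la=\ph_1$ and $\la=\ph_2$ are excluded outright by~\eqref{B_sug}.

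To dispose of most of what remains I would apply Corollary~\ref{B_pil}(2) to the subsystem $\Pi':=\{\al_3,\dots,\al_r\}\in\Pc$, which is indecomposable of type $B_{r-2}$ and, by~\eqref{bou}, satisfies $\pd\Pi'=\{\al_3\}$ for $r\ge7$ and $\pd\Pi'=\{\al_3,\al_r\}$ for $r\le6$. If $\la=\ph_j$ with $4\le j\le r-2$, then $\al_j\in\Pi_\la\cap\Pi'$ while $\al_j\notin\pd\Pi'$ (since $j\notin\{3,r\}$), contradicting Corollary~\ref{B_pil}(2); and if $\la=\ph_1+\ph_{r-1}$ with $r\ge5$, then $\Pi_\la\cap\Pi'=\{\al_{r-1}\}$ and again $\al_{r-1}\notin\pd\Pi'$ (since $r-1\notin\{3,r\}$), the same contradiction. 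The only cases left are $\la=\ph_3$ (which forces $r\ge5$) and $\la=\ph_1+\ph_{r-1}$ for $r\in\{3,4\}$, i.e. $\la=\ph_1+\ph_2$ for $r=3$ and $\la=\ph_1+\ph_3$ for $r=4$.

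Each of these residual weights satisfies~\eqref{B_sug} and~\eqref{B_con}, and I would eliminate it exactly as in the proof of Lemma~\ref{B_pila}: choose $\Pi'\in\Pc$ with $\Pi_\la\cap\Pi'\ne\es$, pass to the $(r-1)$-dimensional subspace $H:=\ba{\{\la\}\cup\Pi'}\subs\ha{\De}$, take $\xi\in\tgt\sm\{0\}$ with $\Cbb\xi$ equal to the intersection of the kernels of all linear functions of $H$, use Lemma~3.4 in~\cite[\Ss3]{My0} to produce $v\in V$ with $\xi\in\ggt_v$ and $\rk\ggt_v=1$, and then combine Lemmas~\ref{didi} and~\ref{bms} (i.e. inequality~\eqref{B_les} for this $H$) with the weight-set estimates of~\Ss\ref{facts} (those quoted as Lemma~\ref{B_lah} and Lemma~\ref{B_lah2} in the proof of Lemma~\ref{B_pila}) to derive $\bgm{\Inn{\La}\sm H}>|\De\sm H|+6$, contradicting the standing assumption that $V/G$ is a smooth manifold. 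The main obstacle is precisely this last step: carrying out the explicit count of how many weights of $R$, taken with multiplicities --- e.g. of the third exterior power of the standard $\sog_{2r+1}$-module when $\la=\ph_3$ --- are moved by $\xi$, and comparing it with the number of roots of $\De$ moved by $\xi$; this is the kind of quantitative relation established in~\Ss\ref{facts}.
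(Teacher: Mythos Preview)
Your reduction to the three residual cases is correct, but the final step is both unnecessary and mis-aimed. The paper's proof avoids the entire case analysis by a single application of part~(2) of Lemma~\ref{B_pila} (not just Corollary~\ref{B_pil}, which encodes only part~(1)). Namely, take $\Pi':=\{\al_1,\dots,\al_{r-2}\}\in\Pc$. The hypothesis $(\la,\chk{\al}_r)=0$ forces $\la\in\ha{\al_r}^{\perp}$, so $H:=\ba{\{\la\}\cup\Pi'}=\ha{\al_r}^{\perp}$; the simple system~$\Pi_0$ of $\De^+\cap H$ is then $\{\al_1,\dots,\al_{r-2},\al\}$ with $\al=\al_{r-1}+\al_r=\ep_{r-1}$, an \emph{indecomposable} system of type $B_{r-1}$. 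Since $\al\notin\Pi$, we have $\Pi\cap\pd\Pi_0\subs\{\al_1\}$, and \eqref{alb} yields $\Pi_{\la}\cap\Pi'=\{\al_1\}$, $(\la,\chk{\al}_1)=1$, $(\la,\chk{\al}_2)=\dots=(\la,\chk{\al}_{r-2})=0$, and $(\la,\chk{\al})=0$. The identity $\chk{\al}=2\chk{\al}_{r-1}+\chk{\al}_r$ combined with $(\la,\chk{\al}_r)=0$ then gives $(\la,\chk{\al}_{r-1})=0$, so $\la=\ph_1\in\De$, contradicting~\eqref{B_sug}.

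Your plan for the residual cases---``redo the proof of Lemma~\ref{B_pila}''---is misconceived: Lemma~3.4 of~\cite{My0} is invoked in that proof precisely under the assumption that the conclusion of Lemma~\ref{B_pila} fails, so you cannot freely produce the vector~$v$ with $\rk\ggt_v=1$. What you \emph{can} do for each residual case is exhibit a $\Pi'$ for which the conclusion of Lemma~\ref{B_pila}(2) fails, and then simply cite Lemma~\ref{B_pila} to obtain the contradiction; but checking this for $\la=\ph_3$ ($r\ge5$), $\la=\ph_1+\ph_2$ ($r=3$), and $\la=\ph_1+\ph_3$ ($r=4$) amounts to exactly the computation above with $\Pi'=\{\al_1,\dots,\al_{r-2}\}$. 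In other words, the ``obstacle'' you flagged disappears once you use the indecomposable-$\Pi_0$ clause of Lemma~\ref{B_pila}, and the paper's choice of $\Pi'$ handles the whole lemma in one stroke rather than leaving residual cases.
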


\begin{proof} By assumption, \eqref{B_con} holds.

Suppose that $(\la,\chk{\al}_i)\ne0$ for some $i\in\{1\sco r-2\}$ (in particular, $r>2$).

The simple system $\Pi':=\{\al_1\sco\al_{r-2}\}\subs\Pi$ lies in the family~$\Pc$. Besides, $\Pi_{\la}\bw\cap\Pi'\bw\ne\es$,
$H:=\ba{\{\la\}\cup\Pi'}=\ha{\al_r}^{\perp}$, $\al:=\al_{r-1}+\al_r\in\De^+\cap H$, $\chk{\al}=2\ep_{r-1}$, $\chk{\al}_r=2\ep_r$,
$\chk{\al}_{r-1}=\ep_{r-1}-\ep_r$, $\chk{\al}=2\chk{\al}_{r-1}+\chk{\al}_r$, $(\la,\chk{\al})\bw=2\cdot(\la,\chk{\al}_{r-1})+(\la,\chk{\al}_r)$. The
positive root system $\De^+\cap H\bw\subs\ha{\De}\bw\subs\tgt_{\Cbb}^*$ corresponds to the indecomposable simple system~$\Pi_0$ of type~$B_{r-1}$
containing the simple roots $\al_1\sco\al_{r-2},\al$ in the standard order. According to~\eqref{bou}, $\pd\Pi_0\subs\{\al_1,\al\}$. By
Lemma~\ref{B_pila}, $i=1$, $(\la,\chk{\al}_1)=1$, $(\la,\chk{\al}_2)\seq(\la,\chk{\al}_{r-2})=0$, and, also, $(\la,\chk{\al})=0$,
$2\cdot(\la,\chk{\al}_{r-1})+(\la,\chk{\al}_r)\bw=(\la,\chk{\al})\bw=0$, yielding $(\la,\chk{\al}_{r-1})=(\la,\chk{\al}_r)=0$. Hence, $\la=\ph_1\in\De$,
contradicting~\eqref{B_sug}.
\end{proof}

Applying Corollary~\ref{B_pil}, Lemma~\ref{B_Bt}, and Equations \eqref{bou} and~\eqref{B_sug}, we obtain that
\equ{
\cask{
r\in\{3,4,5,6\},&\la\in\{\ph_1+\ph_r,\ph_2+\ph_r\};\\
r=3,&\la=\ph_1+\ph_2+\ph_3;\\
r\in\{3,4,6\},&\la=\ph_r;\\
r=4,&\la=\ph_3.}}

Indeed, if \eqref{B_con} does not hold, then the claim is evident. Suppose that \eqref{B_con} holds. By Corollary~\ref{B_pil}, $r>2$ and
$(\la,\chk{\al})=1$ for any $\al\in\Pi_{\la}$.

Assume that $r\ge7$. Applying Equation~\eqref{bou} and Corollary~\ref{B_pil} for $\Pi':=\{\al_3\sco\al_r\}$, we obtain $(\la,\chk{\al}_k)=0$ for all
$k>3$. By Lemma~\ref{B_Bt}, we get $(\la,\chk{\al}_1)\bw=(\la,\chk{\al}_2)\bw=(\la,\chk{\al}_3)\bw=0$, so $\la=0$, contradiction. Hence,
$r\in\{3,4,5,6\}$.

Suppose that $(\la,\chk{\al}_r)=0$. By Lemma~\ref{B_Bt}, $\la=\ph_{r-1}$. It follows from~\eqref{B_sug} that $r>3$. If we had that $r\in\{5,6\}$, then on
taking $\Pi':=\{\al_3\sco\al_r\}$ again, we would arrive at $\al_{r-1}\notin\pd\Pi'$ by Equation~\eqref{bou}, in contradiction to Corollary~\ref{B_pil}.
Thus, $r=4$ in this case.

To conclude, we may assume that $(\la,\chk{\al}_r)\ne0$. By Corollary~\ref{B_pil}, $\{\al_r\}\bw\subs\Pi_{\la}\bw\subs\{\al_1,\al_2,\al_r\}$ and
$\br{\Pi_{\la}=\{\al_1,\al_2,\al_r\}}\Ra(r=3)$. Therefore, $\la\bw\in\{\ph_r,\ph_1\bw+\ph_r,\ph_2\bw+\ph_r\}$, or $r=3$ and $\la=\ph_1+\ph_2+\ph_3$. Note
also that, by~\eqref{B_sug}, if $\la=\ph_r$, then $r\ne5$ and thus $r\in\{3,4,6\}$.

\begin{cas} $r=3,4,5,6$ and $\la\in\{\ph_1+\ph_r,\ph_2+\ph_r\}$ or $r=3$ and $\la=\ph_1+\ph_2+\ph_3$.
\end{cas}

Using the inequality $\de\ge1$, Proposition~\ref{B_all}, and Lemma~\ref{nosm}, we come to a~contradiction to the suggestion that $V/G$ is a~smooth
manifold.

\begin{cas} $r=3,4$ and $\la=\ph_r$.
\end{cas}

The representation~$R_{\ph_r}$ of the complex simple Lie group $\Spin_n(\Cbb)$ is orthogonal and (see~\cite[\Ss3]{CD}) polar. Further, by
Lemma~\ref{B_phr}, $-E\in G^0\subs\Or(V)$, where $E$ denotes the identity operator, and thus $G=G^0$. According to Lemma~\ref{pol}, the quotient $V/G$ is
homeomorphic to a~closed half-space. At the same time, $V/G$ is a~smooth manifold, which is a~contradiction.

\begin{cas} $r=6$ and $\la=\ph_6$.
\end{cas}

The representation~$R$ of the algebra~$\ggt_{\Cbb}$ is symplectic. Hence, $\de=2$. Using Proposition~\ref{B_last} and Lemma~\ref{nosm}, we get
a~contradiction to the assumption that $V/G$ is a~smooth manifold.

\begin{cas} $r=4$ and $\la=\ph_3$.
\end{cas}

The representation~$R$ of the algebra~$\ggt_{\Cbb}$ is orthogonal implying $\de=1$. By Proposition~\ref{B_thi} and Lemma~\ref{nosm}, $V/G$ is not
a~smooth manifold, which contradicts the suggestion.

Thus, we have completely proved Theorem~\ref{B_main} (\name{by contradiction}).

\subsection{Proof of Theorem~\ref{C_main}}

In this subsection, we will prove Theorem~\ref{C_main}.

Suppose that $r>2$ and $\ggt\cong\un_r(\Hbb)$. Set $n:=2r\in\N$.

Clearly, $\De\subs\tgt_{\Cbb}^*$ is an indecomposable root system of type~$C_r$.

We will prove Theorem~\ref{C_main} \name{by contradiction}\: assume that $V/G$ is a~smooth manifold and the linear algebra $R(\ggt_{\Cbb})$ is not
isomorphic to any of the linear algebras $\ad(\ggt_{\Cbb})$, $R_{\ph_2}(C_r)$, $R_{\ph_1}(C_r)$ ($r>2$), and $R_{\ph_4}(C_4)$. The latter means that
\eqn{\label{C_sug}
\begin{array}{c}
\la\notin\De\cup\{\ph_1\};\\
(r=4)\quad\Ra\quad(\la\ne\ph_4).
\end{array}}

It is easy to see that $\de=1\in\R$ if $\la\in Q$ and $\de=2\in\R$ if $\la\notin Q$, where $\de$ is defined as in the beginning of \Ss\ref{promain}.

\begin{lemma}\label{C_pila} Let $\Pi'\in\Pc$ be a~system of simple roots such that $\Pi_{\la}\cap\Pi'\ne\es$ and let
$\Pi_0\bw\subs\De\bw\subs\ha{\De}\bw\subs\tgt_{\Cbb}^*$ be the simple system corresponding to the positive root system
$\De^+\bw\cap\ba{\{\la\}\bw\cup\Pi'}\bw\subs\ha{\De}\bw\subs\tgt_{\Cbb}^*$. Suppose that
\eqn{\label{C_con}
\br{r\in\{3,4\}}\quad\Ra\quad(\la\ne\ph_3).}
Then
\begin{nums}{-1}
\item $\Pi_{\la}\cap\Pi'=\{\al\}\subs\Pi$, where $\al\in\pd\Pi'\subs\Pi$, and $(\la,\chk{\al})=1$\~
\item if the simple system $\Pi_0\subs\ha{\De}$ is indecomposable, then \eqref{alb} holds.
\end{nums}
\end{lemma}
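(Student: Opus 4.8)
The proof should mirror the structure of the proof of Lemma~\ref{B_pila} verbatim, replacing the $B_r$ data by the $C_r$ data. First I would set $H:=\ba{\{\la\}\cup\Pi'}\subs\ha{\De}\subs\tgt_{\Cbb}^*$ and note that, since $\Pi'\in\Pc$ has order $r-2$ and $\la\notin\ha{\Pi'}$ (because $\Pi_{\la}\cap\Pi'\ne\es$ forces $\la$ to pair nontrivially with something in $\Pi'$, hence $\la$ cannot lie in the span of $\Pi'$), $H$ is an $(r-1)$\�dimensional real subspace. Consequently the common kernel of all linear functionals in $H$ is a complex line $\Cbb\xi\subs\tgt_{\Cbb}$ with $\xi\in\tgt\sm\{0\}$, and $\rk\ggt_v=1$ is achievable: by Lemma~3.4 in~\cite[\Ss3]{My0} there is $v\in V$ with $\xi\in\ggt_v$ and $\rk\ggt_v=1$.

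Arguing by contradiction, assume the two itemized conclusions fail. Then Lemmas~\ref{didi} and~\ref{bms} give, exactly as in~\eqref{B_les}, the pair of estimates
\equ{\de\cdot\rk_{\Cbb}\br{R(\xi)}\le\dim_{\Cbb}[\xi,\ggt_{\Cbb}]+6;\qquad\bgm{\Inn{\La}\sm H}\le\de\cdot\bgm{\Inn{\La}\sm H}\le|\De\sm H|+6.}
Now I would invoke the $C_r$\�analogue of the combinatorial input used in the $B_r$ case---the statement that, under the hypotheses $\Pi_{\la}\cap\Pi'\ne\es$ and the failure of the conclusion, together with~\eqref{C_con} and~\eqref{C_sug}, the inequality $\bgm{\Inn{\La}\sm H}\le|\De\sm H|+6$ cannot hold except for a short explicit list of small-rank exceptional $(r,\la)$ (this is the role played by Lemmas~\ref{B_lah} and~\ref{B_lah2} in the $B$\�case; the $C$\�case should reference the corresponding weight-counting lemmas of \Ss\ref{facts}). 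For each surviving exceptional pair one then either directly computes the eigenvalue multiplicities $n_0,n_+,n_-$ of a suitable semisimple element $h_{\al}\in\un_r(\Hbb)_{\Cbb}=\spg_n(\Cbb)$ and checks that $\rk_{\Cbb}\br{R(h_\al)}>\dim_{\Cbb}[h_\al,\ggt_{\Cbb}]+6$ (using that for $C_r$ the relevant small representations $R_{\ph_k}$ are subrepresentations of $\Lambda^k$ of the tautological one, cut out by contraction with the symplectic form), or, when the representation is symplectic so $\de=2$, derives the sharper contradiction $2\cdot\bgm{\Inn{\La}\sm H}\le|\De\sm H|+6$ against the appropriate lemma of \Ss\ref{facts}. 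Finally, having ruled out the failure of the conclusion, item~(i) follows because $\xi$ annihilates $\Pi'$ and a nonzero functional spanned by $\la$, forcing $\Pi_{\la}\cap\Pi'$ to be a single root $\al$ lying in $\pd\Pi'$ with $(\la,\chk\al)=1$; item~(ii) follows by applying the same dichotomy to the indecomposable system $\Pi_0$ in place of $\Pi'$, exactly as in the proof of Lemma~\ref{B_pila}.

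The main obstacle will be the bookkeeping in the middle step: identifying precisely which small-rank pairs $(r,\la)$ survive the inequality $\bgm{\Inn{\La}\sm H}\le|\De\sm H|+6$ for type $C_r$, and then disposing of each by an explicit rank computation in $\spg_n(\Cbb)$. The geometry (existence of $v$ with $\rk\ggt_v=1$, the $H$\�and\�$\xi$ setup) is routine and identical to the $B$\�case; the representation-theoretic estimates for the symplectic group---especially for $R_{\ph_2}$ and $R_{\ph_3}$, whose weight multiplicities are slightly more delicate than the orthogonal case because of the trace-zero condition---are where the real work lies and where one must be careful to use the correct multiplicities of $0$ among the weights. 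I expect the exceptional list to be short (something like $r=3$ with $\la=2\ph_3$ or $\la=\ph_1+\ph_3$, possibly $r=3,4$ with $\la=\ph_3$ which is already excluded by~\eqref{C_con}), so each case is a finite check, but getting the list exactly right is the crux.
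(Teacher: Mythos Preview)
Your plan is essentially the paper's argument, but you are importing more of the $B$-case machinery than is needed. In type~$C_r$ the value of~$\de$ is determined purely by whether $\la\in Q$, so the combinatorial lemma of \Ss\ref{facts} (namely Lemma~\ref{C_lah}) already carries the factor~$\de$ in its hypothesis. The paper's proof is therefore very short: set up $H$ and $\xi$, assume the conclusion fails, invoke Lemma~3.4 of~\cite{My0} to obtain $v$ with $\xi\in\ggt_v$ and $\rk\ggt_v=1$, apply Lemma~\ref{bms} to get $\de\cdot\bgm{\Inn{\La}\sm H}\le|\De\sm H|+6$, and then Lemma~\ref{C_lah} together with~\eqref{C_sug} forces $r\in\{3,4\}$ and $\la=\ph_3$, contradicting~\eqref{C_con}. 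No eigenvalue computations in $\spg_n(\Cbb)$ are needed, and items~(i) and~(ii) are not handled separately; both are covered by the single contradiction.

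Two smaller points. First, your guessed exceptional list ($r=3$ with $\la=2\ph_3$ or $\la=\ph_1+\ph_3$) is the $B$-type residue, not the $C$-type one; in type~$C$ the only survivors of Lemma~\ref{C_lah} are $\la=\ph_k$ with $3\le k\le r\le4$, and \eqref{C_sug} already removes $\ph_4$. Second, you invoke Lemma~3.4 of~\cite{My0} \emph{before} stating the contradiction hypothesis, but the existence of $v$ with $\rk\ggt_v=1$ there depends on the failure of conclusions~(i)/(ii); the order should be reversed, as in the paper's proofs of Lemmas~\ref{B_pila} and~\ref{C_pila}.
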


\begin{proof} Clearly, $H:=\ba{\{\la\}\cup\Pi'}\subs\ha{\De}\subs\tgt_{\Cbb}^*$ is an $(r-1)$\д dimensional (real) subspace, and, hence, the intersection
of the kernels of all its linear functions has form $\Cbb\xi\subs\tgt_{\Cbb}$, $\xi\in\tgt\sm\{0\}$.

Assume that the claim does not hold.

By Lemma~3.4 in~\cite[\Ss3]{My0}, there exists a~vector $v\in V$ such that $\xi\in\ggt_v$ and $\rk\ggt_v=1$.

It follows from Lemma~\ref{bms} that $\de\cdot\bgm{\Inn{\La}\sm H}\le|\De\sm H|+6$. Applying Lemma~\ref{C_lah} and Equation~\eqref{C_sug}, we obtain that
$r\in\{3,4\}$ and $\la=\ph_3$, contradicting~\eqref{C_con}.
\end{proof}

\begin{imp}\label{C_pil} Assume that \eqref{C_con} holds. Then
\begin{nums}{-1}
\item for any $\al\in\Pi_{\la}$ we have $(\la,\chk{\al})=1$\~
\item for any simple system $\Pi'\in\Pc$ we have $\Pi_{\la}\cap\br{\Pi'\sm(\pd\Pi')}=\es$\~
\item on the Dynkin diagram of the simple system~$\Pi$ each two distinct roots of the subset $\Pi_{\la}\subs\Pi$ correspond to vertices, the path
between which contains at least $r-2$ edges.
\end{nums}
\end{imp}

\begin{proof} Follows from Fact~\ref{cov} and Lemma~\ref{C_pila}.
\end{proof}

\begin{lemma}\label{C_Ct} If \eqref{C_con} holds and, also, $(\la,\chk{\al}_r)=0$, then $(\la,\chk{\al}_1)\seq(\la,\chk{\al}_{r-2})=0$.
\end{lemma}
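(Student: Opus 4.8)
The statement to prove is Lemma~\ref{C_Ct}, which is the $C_r$-analogue of Lemma~\ref{B_Bt}. So the plan is to imitate the proof of Lemma~\ref{B_Bt} almost verbatim, adjusting the combinatorics of the root system from type~$B$ to type~$C$. First I would argue by contradiction: suppose \eqref{C_con} holds, $(\la,\chk{\al}_r)=0$, and $(\la,\chk{\al}_i)\ne0$ for some $i\in\{1\sco r-2\}$; in particular $r>2$. Take the indecomposable simple system $\Pi':=\{\al_1\sco\al_{r-2}\}\subs\Pi$ of order $r-2$, so $\Pi'\in\Pc$, and note $\Pi_{\la}\cap\Pi'\ne\es$. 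The span $H:=\ba{\{\la\}\cup\Pi'}$ is then the hyperplane $\ha{\al_r}^{\perp}$ (using $(\la,\chk{\al}_r)=0$ and $(\al_k,\chk{\al}_r)=0$ for $k\le r-2$), and the positive system $\De^+\cap H\subs\ha{\De}$ corresponds to an indecomposable simple system~$\Pi_0$, which in type~$C_r$ is of type~$C_{r-1}$ with simple roots $\al_1\sco\al_{r-2}$ followed by the short root $\al:=2\al_{r-1}+\al_r$ (in the standard realization $\al_{r-1}=\ep_{r-1}-\ep_r$, $\al_r=2\ep_r$, so $\al=2\ep_{r-1}$ and $\chk{\al}=\ep_{r-1}$). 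One computes $\chk{\al}=\chk{\al}_{r-1}+\chk{\al}_r$ — actually I must double-check this relation between coroots; since $\chk{\al}_{r-1}=\ep_{r-1}-\ep_r$ and $\chk{\al}_r=\ep_r$ we get $\chk{\al}=\ep_{r-1}=\chk{\al}_{r-1}+\chk{\al}_r$, so $(\la,\chk{\al})=(\la,\chk{\al}_{r-1})+(\la,\chk{\al}_r)=(\la,\chk{\al}_{r-1})$.

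Next I invoke \eqref{bou}, which for $\Pi_0\cong C_{r-1}$ gives $\pd\Pi_0=\{\al_1,\al_2\}$ (when $r-1>2$; the small cases $r\le3$ are excluded or handled directly by \eqref{C_con} and the fact that $r>2$). Now apply Lemma~\ref{C_pila} to the pair $(\Pi',\Pi_0)$: part~i gives $\Pi_{\la}\cap\Pi'=\{\al_j\}$ with $\al_j\in\pd\Pi'\subs\{\al_1,\al_2\}$ and $(\la,\chk{\al}_j)=1$; part~ii (since $\Pi_0$ is indecomposable) forces, via \eqref{alb}, that $\al_j\in\Pi\cap\pd\Pi_0$ and $(\la,\chk{\be})=0$ for all $\be\in\Pi_0\sm\{\al_j\}$. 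In particular $(\la,\chk{\al}_k)=0$ for $k\in\{1\sco r-2\}\sm\{j\}$ and $(\la,\chk{\al})=0$, hence $(\la,\chk{\al}_{r-1})=(\la,\chk{\al})-(\la,\chk{\al}_r)=0$ and, together with the hypothesis $(\la,\chk{\al}_r)=0$, we get $\Pi_{\la}=\{\al_j\}$ with $j\in\{1,2\}$ and $(\la,\chk{\al}_j)=1$; therefore $\la=\ph_1$ or $\la=\ph_2$. But $\ph_2\in\De$ (it is the highest short root, or use that $C_r$ has a root of that form) and of course $\ph_1\in\{\ph_1\}$ is excluded directly — both $\la=\ph_1$ and $\la=\ph_2$ contradict \eqref{C_sug}, since that relation excludes $\la\in\De\cup\{\ph_1\}$. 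This contradiction shows no such $i$ exists, i.e. $(\la,\chk{\al}_1)\seq(\la,\chk{\al}_{r-2})=0$, as claimed.

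The main obstacle I expect is bookkeeping about which coroot relation holds for the root~$\al$ that joins $\Pi'$ to a $C_{r-1}$-system, and making sure the identification $\pd\Pi_0=\{\al_1,\al_2\}$ is the correct one under the standard numbering (so that $\al_j$ really lands among $\al_1,\al_2$ and not at the far end of $\Pi_0$). A secondary subtlety is the genuinely small cases: for $r=3$ we have $\Pi'=\{\al_1\}$, $\Pi_0\cong C_2$, and one must either check \eqref{bou}/Lemma~\ref{C_pila} still apply or note that \eqref{C_con} already pins $\la$ down. But since Lemma~\ref{C_pila} and Corollary~\ref{C_pil} are available exactly in the form used in the $B$-case, and the type-$C$ analogues of \eqref{bou} are displayed, the argument should go through with only these cosmetic changes relative to Lemma~\ref{B_Bt}.
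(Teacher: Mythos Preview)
Your argument is correct and follows the paper's own proof essentially line for line: the same choice $\Pi'=\{\al_1\sco\al_{r-2}\}$, the same identification of $\Pi_0\cong C_{r-1}$ with extra simple root $\al=2\al_{r-1}+\al_r=2\ep_{r-1}$, the coroot relation $\chk{\al}=\chk{\al}_{r-1}+\chk{\al}_r$, and the appeal to Lemma~\ref{C_pila} together with $\pd\Pi_0=\{\al_1,\al_2\}$ to force $\la\in\{\ph_1,\ph_2\}\subs\De\cup\{\ph_1\}$, contradicting~\eqref{C_sug}. Two harmless slips to fix: $\al=2\ep_{r-1}$ is a \emph{long} root in $C_r$, and the inclusion $\pd\Pi'\subs\{\al_1,\al_2\}$ is not generally valid for $\Pi'\cong A_{r-2}$ --- but you don't need it, since the constraint $\al_j\in\{\al_1,\al_2\}$ already comes (as you note, and as the paper does) from part~ii via $\Pi\cap\pd\Pi_0$.
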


\begin{proof} Assume that $(\la,\chk{\al}_i)\ne0$ for some $i\in\{1\sco r-2\}$.

The simple system $\Pi':=\{\al_1\sco\al_{r-2}\}\subs\Pi$ lies in the family~$\Pc$. Besides, $\Pi_{\la}\bw\cap\Pi'\bw\ne\es$,
$H:=\ba{\{\la\}\cup\Pi'}=\ha{\al_r}^{\perp}$, $\al:=2\al_{r-1}+\al_r\in\De^+\cap H$, $\chk{\al}=\ep_{r-1}$, $\chk{\al}_r=\ep_r$,
$\chk{\al}_{r-1}=\ep_{r-1}-\ep_r$, $\chk{\al}=\chk{\al}_{r-1}+\chk{\al}_r$, $(\la,\chk{\al})\bw=(\la,\chk{\al}_{r-1})+(\la,\chk{\al}_r)$. The positive
root system $\De^+\cap H\subs\ha{\De}\subs\tgt_{\Cbb}^*$ corresponds to the indecomposable simple system~$\Pi_0$ of type~$C_{r-1}$ containing the
simple roots $\al_1\sco\al_{r-2},\al$ in the standard order. According to~\eqref{bou}, $\pd\Pi_0\bw=\{\al_1,\al_2\}$. By Lemma~\ref{C_pila},
$i\in\{1,2\}$, $(\la,\chk{\al}_j)=\de_{ij}$ ($j=1\sco r-2$), and $(\la,\chk{\al})=0$. Hence,
$(\la,\chk{\al}_{r-1})+(\la,\chk{\al}_r)\bw=(\la,\chk{\al})\bw=0$, $(\la,\chk{\al}_{r-1})=(\la,\chk{\al}_r)=0$. Thus, $(\la,\chk{\al}_j)=\de_{ij}$ for
any $j=1\sco r$, i.\,e. $\la\bw=\ph_i\bw\in\{\ph_1,\ph_2\}\bw\subs\De\cup\{\ph_1\}$, contradicting~\eqref{C_sug}.
\end{proof}

Applying Corollary~\ref{C_pil}, Lemma~\ref{C_Ct}, and Equations \eqref{bou} and~\eqref{C_sug}, we obtain that
\equ{
\cask{
r\in\{3,4\},&\la\in\{\ph_1+\ph_r,\ph_2+\ph_r\};\\
r=3,&\la=\ph_1+\ph_2+\ph_3;\\
r\in\{4,5\},&\la=\ph_{r-1};\\
r=3,&\la=\ph_3.}}

Indeed, if \eqref{C_con} does not hold, then the claim is evident. Suppose that \eqref{C_con} holds. By Corollary~\ref{C_pil}, $(\la,\chk{\al})=1$ for
any $\al\in\Pi_{\la}$.

Applying Equation~\eqref{bou} and Corollary~\ref{C_pil} for $\Pi':=\{\al_3\sco\al_r\}$, we obtain $(\la,\chk{\al}_k)=0$ for all $k>4$.

Suppose that $(\la,\chk{\al}_r)=0$. By Lemma~\ref{C_Ct}, $\la=\ph_{r-1}$. Hence, $r-1\le4$, $r\le5$. It follows from~\eqref{C_sug} that $r>3$. Thus,
$r\in\{4,5\}$ in this case.

To conclude, we may assume that $(\la,\chk{\al}_r)\ne0$. We have $r\le4$, $r\in\{3,4\}$. By Corollary~\ref{C_pil},
$\{\al_r\}\bw\subs\Pi_{\la}\bw\subs\{\al_1,\al_2,\al_r\}$ and $\br{\Pi_{\la}=\{\al_1,\al_2,\al_r\}}\Ra(r=3)$. Therefore,
$\la\bw\in\{\ph_r,\ph_1\bw+\ph_r,\ph_2\bw+\ph_r\}$, or $r=3$ and $\la=\ph_1+\ph_2+\ph_3$. Note also that, by~\eqref{C_sug}, if $\la=\ph_r$, then $r\ne4$
and thus $r=3$.

\begin{cas} $r=3,4$ and $\la\in\{\ph_1+\ph_r,\ph_2+\ph_r\}$ or $r=3$ and $\la=\ph_1+\ph_2+\ph_3$.
\end{cas}

Using Proposition~\ref{C_all} and Lemma~\ref{nosm}, we come to a~contradiction to the suggestion that $V/G$ is a~smooth manifold.

\begin{cas} $r=4,5$ and $\la=\ph_{r-1}$.
\end{cas}

Applying Proposition~\ref{C_las} and Lemma~\ref{nosm}, we get a~contradiction to the assumption that $V/G$ is a~smooth manifold.

\begin{cas} $r=3$ and $\la=\ph_3$.
\end{cas}

We have $\la\notin Q$ and thus $\de=2$.

By Proposition~\ref{C_thi} and Lemma~\ref{nosm}, $2\cdot\Bn{\bc{\la'\in\Inn{\La}\cln(\la',\chk{\al}_1)\ne0}}\le14+6$,
$\rk_{\Cbb}\br{R(h_{\al_1})}\bw=\Bn{\bc{\la'\in\Inn{\La}\cln\la'(h_{\al_1})\ne0}}\bw=\Bn{\bc{\la'\in\Inn{\La}\cln(\la',\chk{\al}_1)\ne0}}\bw\le10$.

The representation~$R$ of the algebra $\ggt_{\Cbb}=\spg_n(\Cbb)$, $n=6$, is isomorphic to the external cube of its tautological representation, and the
eigenvalues of the semisimple operator $h_{\al_1}\bw\in\spg_n(\Cbb)$ are exactly the numbers $0$, $1$, and~$-1$ with multiplicities $n_0:=n_+:=n_-:=2$.
Hence, $\rk_{\Cbb}\br{R(h_{\al_1})}=\Cn{n}{3}-\Cn{n_0}{1}\cdot\Cn{n_+}{1}\cdot\Cn{n_-}{1}=12$ contradicting $\rk_{\Cbb}\br{R(h_{\al_1})}\le10$.

Thus, we have completely proved Theorem~\ref{C_main} (\name{by contradiction}).

\subsection{Proof of Theorem~\ref{D_main}}

In this subsection, we will prove Theorem~\ref{D_main}.

Suppose that $r>3$ and $\ggt\cong\sog_n$, where $n:=2r\in\N$.

Clearly, $\De\subs\tgt_{\Cbb}^*$ is an indecomposable root system of type~$D_r$.

We will prove Theorem~\ref{D_main} \name{by contradiction}\: assume that $V/G$ is a~smooth manifold and the linear algebra $R(\ggt_{\Cbb})$ is not
isomorphic to any of the linear algebras $\ad(\ggt_{\Cbb})$, $R_{\ph_1}(D_r)$, $R_{2\ph_1}(D_r)$ ($r>3$), and $R_{\ph_r}(D_r)$ ($r=5,6,8$). The latter
means that
\eqn{\label{D_sug}
\begin{array}{lll}
&&\hph{\bigl(\bigr.}\la\notin\De\cup\{\ph_1,2\ph_1\};\\
(r=4)\quad&\Ra\quad&\br{\la\notin\{\ph_3,\ph_4,2\ph_3,2\ph_4\}};\\
\br{r\in\{5,6,8\}}\quad&\Ra\quad&\br{\la\notin\{\ph_{r-1},\ph_r\}}.
\end{array}}
Further, since for any $c_1\sco c_r\in\Z_{\ge0}$ the linear algebra $R_{c_1\ph_1\spl c_r\ph_r}(D_r)$ is isomorphic to the linear algebra
$R_{c_1\ph_1\spl c_{r-2}\ph_{r-2}+c_r\ph_{r-1}+c_{r-1}\ph_r}(D_r)$ and, if $r=4$, to the linear algebra
$R_{c_4\ph_1+c_2\ph_2+c_1\ph_3+c_3\ph_4}(D_4)$, without loss of generality consider that
\eqn{\label{D_perm}
\begin{array}{c}
(\la,\chk{\al}_{r-1})\ge(\la,\chk{\al}_r);\\
\Br{(r=4)\land\br{(\la,\chk{\al}_3)\eqi(\la,\chk{\al}_4)\pmod{2}}}\quad\Ra\quad\br{(\la,\chk{\al}_1)\eqi(\la,\chk{\al}_3)\pmod{2}}.
\end{array}}

\begin{lemma} Let $\Pi'\in\Pc$ be a~system of simple roots such that $\Pi_{\la}\cap\Pi'\ne\es$ and let
$\Pi_0\bw\subs\De\bw\subs\ha{\De}\bw\subs\tgt_{\Cbb}^*$ be the simple system corresponding to the positive root system
$\De^+\bw\cap\ba{\{\la\}\bw\cup\Pi'}\bw\subs\ha{\De}\bw\subs\tgt_{\Cbb}^*$. Suppose that
\eqn{\label{D_con}
\begin{array}{l}
\hph{\bigl(\bigr.}\la\ne\ph_1+\ph_{r-1};\\
(\la=\ph_{r-1})\quad\Ra\quad\Br{\br{\Pi'=\{\al_3\sco\al_r\}\subs\Pi}\ \land\ (r>8)}.
\end{array}}
Then
\begin{nums}{-1}
\item $\Pi_{\la}\cap\Pi'=\{\al\}\subs\Pi$, where $\al\in\pd\Pi'\subs\Pi$, and $(\la,\chk{\al})=1$\~
\item if the simple system $\Pi_0\subs\ha{\De}$ is indecomposable, then \eqref{alb} holds.
\end{nums}
\end{lemma}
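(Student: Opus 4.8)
The plan is to follow the same template already used for the analogous Lemmas~\ref{B_pila} and~\ref{C_pila}, since the statement to be proved is their $D_r$ counterpart. First I would set $H:=\ba{\{\la\}\cup\Pi'}\subs\ha{\De}\subs\tgt_{\Cbb}^*$ and note that it is an $(r-1)$\� dimensional real subspace; hence the common kernel of all its linear functionals has the form $\Cbb\xi\subs\tgt_{\Cbb}$ with $\xi\in\tgt\sm\{0\}$. Arguing \name{by contradiction}, I assume the conclusion fails. Applying Lemma~3.4 of \cite[\Ss3]{My0}, I obtain a~vector $v\in V$ with $\xi\in\ggt_v$ and $\rk\ggt_v=1$, and then Lemmas \ref{didi} and~\ref{bms} yield the estimate $\de\cdot\bgm{\Inn{\La}\sm H}\le|\De\sm H|+6$ (together with the operator-rank inequality $\de\cdot\rk_{\Cbb}\br{R(\xi)}\le\dim_{\Cbb}[\xi,\ggt_{\Cbb}]+6$ for later bookkeeping).

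Next I would invoke the $D_r$ analogue of the counting Lemmas \ref{B_lah}/\ref{C_lah} — presumably a Lemma ``\texttt{D\_lah}'' appearing further in the paper — which classifies, under the sharpened assumption \eqref{D_con} forbidding $\la=\ph_1+\ph_{r-1}$ and restricting the case $\la=\ph_{r-1}$, all highest weights~$\la$ for which $\de\cdot\bgm{\Inn{\La}\sm H}\le|\De\sm H|+6$ can hold. Combined with the blanket exclusions \eqref{D_sug} and the normalization \eqref{D_perm}, this should cut the list of surviving candidates down to a~handful of small-rank exceptional pairs $(r,\la)$, exactly as happened in type~$B$ (where only $r=3,\ \la=2\ph_3$ and $r=2,\ \la\in\ph_2+(Q\sm\{0\})$ remained) and in type~$C$ (where only $r\in\{3,4\},\ \la=\ph_3$ remained, and then got killed by~\eqref{C_con}). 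For each such residual pair I would then derive a~contradiction by an explicit $\sog_n$\� computation: realize $\ad(\ggt_{\Cbb})$ as the exterior square and $R$ as an explicit tensor construction (exterior power or half-spin) of the tautological representation, compute the eigenvalue multiplicities $n_0,n_+,n_-$ of the semisimple operator attached to the relevant $h_{\al_i}\in\Cbb\xi$, and check that the resulting value of $\rk_{\Cbb}\br{R(h_{\al_i})}$ exceeds $\dim_{\Cbb}[h_{\al_i},\ggt_{\Cbb}]+6$, or that the refined $\bgm{\Inn{\La}\sm H}$ count contradicts the corresponding ``\texttt{D\_lah2}'' bound in the symplectic subcase $\de=2$. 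The parity conditions packaged in \eqref{D_perm} ensure this reduction is not spoiled by outer automorphisms of $D_r$.

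Finally, the two-part structure of the conclusion is handled as in the $B$ and $C$ cases. Part~(1) — that $\Pi_{\la}\cap\Pi'$ is a~singleton $\{\al\}$ with $\al\in\pd\Pi'$ and $(\la,\chk{\al})=1$ — follows because $\La$ restricted to $\Pi'$ must be a~minuscule-type weight of the subsystem generated by $\Pi'$, which by the description of $\pd$ in \cite[\Ss4,~Table\,1]{My0} (and Equation~\eqref{bou} for type~$D$) forces $\al\in\pd\Pi'$ and $(\la,\chk\al)=1$; anything larger would make $\bgm{\Inn{\La}\sm H}$ too big. Part~(2) is then an immediate consequence applied to the indecomposable simple system $\Pi_0$ in place of $\Pi'$, since $\{\la\}\cup\Pi'$ and $\{\la\}\cup\Pi_0$ span the same hyperplane~$H$, so the same estimate and the same table give $\al\in\Pi\cap\pd\Pi_0$ and $(\la,\chk\be)=0$ for all $\be\in\Pi_0\sm\{\al\}$, which is precisely~\eqref{alb}.

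I expect the main obstacle to be the residual-case analysis for $D_r$: because $D_r$ has two ``spin-type'' fundamental weights $\ph_{r-1},\ph_r$ related by an outer automorphism, and because $\conv(\La)\cap(\La+Q)$ can pick up interior weights with multiplicity for composite $\la$ like $\ph_2+\ph_r$, the bookkeeping in the counting lemma is more delicate than in types~$B$ and~$C$, and the special hypotheses in \eqref{D_con} (the $\la=\ph_1+\ph_{r-1}$ exclusion and the $\Pi'=\{\al_3\sco\al_r\},\ r>8$ side condition for $\la=\ph_{r-1}$) are precisely the cases where the generic argument breaks and must be deferred to a~later, dedicated proposition. Verifying that the half-spin representations $R_{\ph_r}(D_r)$ for $r=5,6,8$ genuinely drop out only via polarity (Lemma~\ref{pol}) or via the refined symplectic count, rather than via the crude rank inequality, will require care with the exact multiplicities.
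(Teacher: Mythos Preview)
Your overall framework matches the paper's: set $H=\ba{\{\la\}\cup\Pi'}$, pick $\xi$ spanning its annihilator, assume the conclusion fails, invoke Lemma~3.4 of~\cite{My0} to get $v$ with $\rk\ggt_v=1$, and apply Lemma~\ref{bms} to obtain $\de\cdot\bgm{\Inn{\La}\sm H}\le|\De\sm H|+6$. That part is exactly right.

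Where you diverge is in the next step. There is no single catch-all ``\texttt{D\_lah}'' lemma; the paper instead splits into three cases according to whether $\la\in Q$, and if not, according to the parity of $(\la,\chk\al_{r-1})-(\la,\chk\al_r)$. Each case invokes its own counting lemma (Lemmas~\ref{D_ev}, \ref{D_lah}, \ref{D_odd}, \ref{D_lah1} respectively) and reaches a contradiction directly from \eqref{D_sug}, \eqref{D_perm}, and~\eqref{D_con}. In particular, the value of~$\de$ itself depends on the case (for the odd-parity case $\de$ is determined by $r\bmod4$), so the case split is not cosmetic. Moreover, no residual rank computations of the $\rk_{\Cbb}R(h_{\al_i})$ type occur in the $D$ proof at all; the counting lemmas already finish every subcase. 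Your anticipation of an explicit $\sog_n$ eigenvalue calculation, and of a ``\texttt{D\_lah2}'' for a symplectic subcase, is extraneous.

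Finally, your last paragraph misreads the logical structure. Parts~(1) and~(2) are not proved by a separate positive argument about minuscule weights; they are exactly the hypotheses under which Lemma~3.4 of~\cite{My0} fails to apply. The entire proof is: if either (1) or (2) is false, that lemma produces the vector~$v$, whence the estimate, whence a contradiction. Once the contradiction is obtained, both parts follow simultaneously.
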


\begin{proof} Clearly, $H:=\ba{\{\la\}\cup\Pi'}\subs\ha{\De}\subs\tgt_{\Cbb}^*$ is an $(r-1)$\д dimensional (real) subspace, and, hence, the
intersection of the kernels of all its linear functions has form $\Cbb\xi\subs\tgt_{\Cbb}$, $\xi\in\tgt\sm\{0\}$.

Assume that the claim does not hold.

By Lemma~3.4 in~\cite[\Ss3]{My0}, there exists a~vector $v\in V$ such that $\xi\in\ggt_v$ and $\rk\ggt_v=1$.

It follows from Lemma~\ref{bms} that $\bgm{\Inn{\La}\sm H}\le\de\cdot\bgm{\Inn{\La}\sm H}\le|\De\sm H|+6$.

\begin{cas} $\la\in Q$.
\end{cas}

By Lemma~\ref{D_ev}, $\la\in\De\cup\{2\ph_1\}$ or $r=4$ and $\la\in\{2\ph_3,2\ph_4\}$, contradicting~\eqref{D_sug}.

\begin{cas} $\la\notin Q$ and $(\la,\chk{\al}_{r-1})\eqi(\la,\chk{\al}_r)\pmod{2}$.
\end{cas}

By~\eqref{D_sug}, $\la\notin Q\cup\{\ph_1\}$. Further, according to Lemma~\ref{D_lah}, $r=4$. Thus, $r=4$ and
$(\la,\chk{\al}_3)\eqi(\la,\chk{\al}_4)\pmod{2}$. By~\eqref{D_perm}, $(\la,\chk{\al}_1)\eqi(\la,\chk{\al}_3)\eqi(\la,\chk{\al}_4)\pmod{2}$ implying
$\la\in Q$. This contradicts the assumption.

\begin{cas} $(\la,\chk{\al}_{r-1})\neqi(\la,\chk{\al}_r)\pmod{2}$.
\end{cas}

It is easy to see that $\de=1\in\R$ if $r\in4\Z$ and $\de=2\in\R$ if $r\notin4\Z$, where $\de$ is defined as in the beginning of \Ss\ref{promain}.

By Lemma~\ref{D_odd}, $\la\in\{\ph_{r-1},\ph_r,\ph_1+\ph_{r-1},\ph_1+\ph_r\}$. Further, it follows from \eqref{D_perm} and~\eqref{D_con} that
$\la=\ph_{r-1}$, $\Pi'=(3\sco r)\subs\Pi$, and $r>8$, contradicting Lemma~\ref{D_lah1}.

This completes the proof.
\end{proof}

\begin{imp}\label{D_pila} Let $\Pi'\in\Pc$ be some system of simple roots. Denote by~$\Pi_0$ the simple system corresponding to the positive root system
$\De^+\cap\ba{\{\la\}\cup\Pi'}\subs\ha{\De}\subs\tgt_{\Cbb}^*$. Suppose that \eqref{D_con} holds. Then
\equ{
|\Pi_{\la}\cap\Pi'|\le1,\quad\quad\Pi_{\la}\cap\br{\Pi'\sm(\pd\Pi')}=\es,\quad\quad\quad\fa\al\in\Pi_{\la}\cap\Pi'\quad(\la,\chk{\al})=1.}
If $\Pi_{\la}\cap\Pi'\ne\es$ and the simple system~$\Pi_0$ is indecomposable, then \eqref{alb} holds.
\end{imp}

\begin{imp}\label{D_pil} For any $\al\in\Pi_{\la}$ we have $(\la,\chk{\al})=1$. Besides, on the Dynkin diagram of the simple system~$\Pi$ each two
distinct roots of the subset $\Pi_{\la}\subs\Pi$ correspond to vertices, the path between which contains at least $r-2$ edges.
\end{imp}

\begin{proof} In the case $\la\in\{\ph_{r-1},\ph_1+\ph_{r-1}\}$, the claim is evident, and in the case $\la\bw\notin\{\ph_{r-1},\ph_1\bw+\ph_{r-1}\}$, it
follows from Fact~\ref{cov} and Corollary~\ref{D_pila}.
\end{proof}

\begin{lemma}\label{D_Dt} Suppose that $(\la,\chk{\al}_{r-1})=(\la,\chk{\al}_r)$. Then $(\la,\chk{\al}_1)\seq(\la,\chk{\al}_{r-2})=0$.
\end{lemma}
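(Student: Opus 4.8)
The plan is to argue by contradiction along the lines of Lemmas~\ref{B_Bt} and~\ref{C_Ct}, reducing everything to~\eqref{D_sug}. First I would record that the hypothesis $(\la,\chk{\al}_{r-1})=(\la,\chk{\al}_r)$ already ensures that~\eqref{D_con} holds for every choice of $\Pi'$: both $\ph_1+\ph_{r-1}$ and $\ph_{r-1}$ satisfy $(\cdot,\chk{\al}_{r-1})=1\ne0=(\cdot,\chk{\al}_r)$, so $\la$ equals neither of them, which makes the first line of~\eqref{D_con} true and the second one vacuous; hence Corollary~\ref{D_pila} is applicable. Assume, contrary to the claim, that $(\la,\chk{\al}_i)\ne0$ for some $i\in\{1\sco r-2\}$, and take $\Pi':=\{\al_1\sco\al_{r-2}\}\subs\Pi$; this is an indecomposable simple system of type $A_{r-2}$, so $\Pi'\in\Pc$, and $\al_i\in\Pi_{\la}\cap\Pi'\ne\es$.

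The technical core is to identify $H:=\ba{\{\la\}\cup\Pi'}$ and the simple system $\Pi_0$ of the positive root system $\De^+\cap H$. In the standard coordinate model of $D_r$, with $\chk{\al}_j=\ep_j-\ep_{j+1}$ for $j<r$, $\chk{\al}_{r-1}=\ep_{r-1}-\ep_r$, $\chk{\al}_r=\ep_{r-1}+\ep_r$, the hypothesis is equivalent to $(\la,\ep_r)=0$; since also $(\al_j,\ep_r)=0$ for $j\le r-2$, we get $H\subs\ha{\ep_r}^{\perp}$, and, because $\la\in C\sm\{0\}$ is dominant, $\sum_{j<r}(\la,\ep_j)\ge(\la,\ep_1)>0$, so $\la\notin\ba{\Pi'}$ and in fact $H=\ha{\ep_r}^{\perp}$. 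Then $\De^+\cap H$ is a positive root system of type $D_{r-1}$ whose simple system is $\Pi_0=\{\al_1\sco\al_{r-2},\ga\}$ with $\ga:=\al_{r-2}+\al_{r-1}+\al_r=\ep_{r-2}+\ep_{r-1}$, the fork being attached at $\al_{r-3}$ with arms $\al_{r-2}$ and $\ga$; in particular $\Pi_0$ is indecomposable since $r-1\ge3$. Applying~\eqref{bou} to $\Pi_0\cong D_{r-1}$ (and trivially for $r=4$, where $\Pi_0$ has only three elements) gives $\pd\Pi_0\subs\{\al_1,\al_{r-2},\ga\}$, with $\pd\Pi_0=\{\al_1\}$ when $r\ge8$; as $\ga\notin\Pi$, this yields $\Pi\cap\pd\Pi_0\subs\{\al_1,\al_{r-2}\}$. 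Now Corollary~\ref{D_pila} gives $|\Pi_{\la}\cap\Pi'|\le1$ and $(\la,\chk{\al})=1$ for the unique $\al\in\Pi_{\la}\cap\Pi'$, and, since $\Pi_{\la}\cap\Pi'\ne\es$ and $\Pi_0$ is indecomposable, also~\eqref{alb}: $\al\in\Pi\cap\pd\Pi_0\subs\{\al_1,\al_{r-2}\}$ and $(\la,\chk{\be})=0$ for all $\be\in\Pi_0\sm\{\al\}$; moreover $\al=\al_i$ because $\al_i\in\Pi_{\la}\cap\Pi'$.

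It remains to split into two sub-cases. If $\al=\al_{r-2}$ (possible only for $r\le7$), then $\ga\in\Pi_0\sm\{\al\}$ gives $(\la,\chk{\ga})=0$; as $D_r$ is simply laced, $\chk{\ga}=\chk{\al}_{r-2}+\chk{\al}_{r-1}+\chk{\al}_r$, so $(\la,\chk{\al}_{r-2})+(\la,\chk{\al}_{r-1})+(\la,\chk{\al}_r)=0$, which is impossible since $(\la,\chk{\al}_{r-2})=(\la,\chk{\al})=1$ while the other two summands are $\ge0$ by dominance. If $\al=\al_1$, then $(\la,\chk{\al}_j)=0$ for $j=2\sco r-2$ together with $(\la,\chk{\ga})=0$ gives $(\la,\chk{\al}_{r-1})+(\la,\chk{\al}_r)=0$, whence $(\la,\chk{\al}_{r-1})=(\la,\chk{\al}_r)=0$ by the hypothesis; combined with $(\la,\chk{\al}_1)=1$ this means $\la=\ph_1$, contradicting~\eqref{D_sug}. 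Both sub-cases being impossible, $(\la,\chk{\al}_1)\seq(\la,\chk{\al}_{r-2})=0$. The step I expect to require the most care is the bookkeeping around $\Pi_0$: verifying that $\De^+\cap H$ really is of type $D_{r-1}$ with fork arms $\al_{r-2}$ and $\al_{r-2}+\al_{r-1}+\al_r$, and keeping track of the fact that $\Pi\cap\pd\Pi_0$ may consist of two simple roots (unlike the $B$ and $C$ cases), which forces the case split above, one branch of which is closed off by a plain positivity argument rather than by pointing to an excluded highest weight.
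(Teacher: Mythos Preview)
Your proof is correct and follows essentially the same approach as the paper's: the same $\Pi'=\{\al_1,\dots,\al_{r-2}\}$, the same identification $H=\ha{\ep_r}^{\perp}$, the same $D_{r-1}$ simple system $\Pi_0=\{\al_1,\dots,\al_{r-2},\ga\}$ with $\ga=\al_{r-2}+\al_{r-1}+\al_r$, and the same use of Corollary~\ref{D_pila} with $\pd\Pi_0\subs\{\al_1,\al_{r-2},\ga\}$. The only cosmetic difference is that the paper observes once that $\ga\notin\Pi$, so $\ga\in\Pi_0\sm\{\al_i\}$ in either case, giving $(\la,\chk{\ga})=0$ and hence $(\la,\chk{\al}_{r-2})=(\la,\chk{\al}_{r-1})=(\la,\chk{\al}_r)=0$ uniformly, thereby ruling out $i=r-2$ and forcing $\la=\ph_1$; you instead split into the two sub-cases $\al_i=\al_{r-2}$ and $\al_i=\al_1$ and close the first by positivity, which amounts to the same computation.
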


\begin{proof} By assumption, $\la\notin\{\ph_{r-1},\ph_1+\ph_{r-1}\}$.

Assume that $(\la,\chk{\al}_i)\ne0$ for some $i\in\{1\sco r-2\}$.

The simple system $\Pi':=\{\al_1\sco\al_{r-2}\}\subs\Pi$ satisfies $\Pi'\in\Pc$, $\Pi_{\la}\bw\cap\Pi'\bw\ne\es$, and
$H:=\ba{\{\la\}\cup\Pi'}=\ha{\al_{r-1}-\al_r}^{\perp}$. We have $\al:=\al_{r-2}+\al_{r-1}+\al_r\in\De^+\cap H$,
$\chk{\al}\bw=\ep_{r-2}\bw+\ep_{r-1}$, $\chk{\al}_r=\ep_{r-1}+\ep_r$, $\chk{\al}_{r-1}=\ep_{r-1}-\ep_r$, $\chk{\al}_{r-2}=\ep_{r-2}-\ep_{r-1}$,
$\chk{\al}=\chk{\al}_{r-2}+\chk{\al}_{r-1}+\chk{\al}_r$, $(\la,\chk{\al})=(\la,\chk{\al}_{r-2})+(\la,\chk{\al}_{r-1})+(\la,\chk{\al}_r)$. The positive
root system $\De^+\bw\cap H\bw\subs\ha{\De}\bw\subs\tgt_{\Cbb}^*$ corresponds to the indecomposable simple system~$\Pi_0$ of type~$D_{r-1}$
containing the simple roots $\al_1\sco\al_{r-2},\al$ in the standard order. According to~\eqref{bou}, $\pd\Pi_0\bw\subs\{\al_1,\al_{r-2},\al\}$. By
Corollary~\ref{D_pila}, $i\in\{1,r-2\}$, $(\la,\chk{\al}_j)=\de_{ij}$ ($j=1\sco r-2$), and, also, $(\la,\chk{\al})=0$,
$(\la,\chk{\al}_{r-2})\bw+(\la,\chk{\al}_{r-1})\bw+(\la,\chk{\al}_r)\bw=(\la,\chk{\al})\bw=0$, yielding
$(\la,\chk{\al}_{r-2})\bw=(\la,\chk{\al}_{r-1})\bw=(\la,\chk{\al}_r)\bw=0$, $i\ne r-2$, $i=1$, $(\la,\chk{\al}_j)=\de_{1j}$ ($j=1\sco r$), $\la=\ph_1$,
contradicting~\eqref{D_sug}.
\end{proof}

Applying Corollary~\ref{D_pil}, Lemma~\ref{D_Dt}, and Equations \eqref{D_sug} and~\eqref{D_perm}, we obtain that $\la\bw\in\{\ph_{r-1},\ph_1+\ph_{r-1}\}$
and $(\la=\ph_{r-1})\Ra(r\ne4,5,6,8)$.

\begin{cas} $\la=\ph_1+\ph_{r-1}$.
\end{cas}

It is easy to see that $\de=1\in\R$ if $r\in4\Z$ and $\de=2\in\R$ if $r\notin4\Z$. Using Proposition~\ref{D_all} and Lemma~\ref{nosm}, we get
a~contradiction to the assumption that $V/G$ is a~smooth manifold.

\begin{cas} $r=7$ and $\la=\ph_6$.
\end{cas}

The representation~$R$ of the algebra~$\ggt_{\Cbb}$ is not self-adjoint. Therefore, $\de=2$. Applying Proposition~\ref{D_last} and Lemma~\ref{nosm}, we
come to a~contradiction to the suggestion that $V/G$ is a~smooth manifold.

\begin{cas} $r>8$ and $\la=\ph_{r-1}$.
\end{cas}

The simple system $\Pi':=\{\al_3\sco\al_r\}\subs\Pi$ lies in the family~$\Pc$ and satisfies~\eqref{D_con}. By Corollary~\ref{D_pila},
$\Pi_{\la}\cap\br{\Pi'\sm(\pd\Pi')}=\es$. Since $\Pi'\cong D_{r-2}$ and $r-2\ge7$, we have $\pd\Pi'=\{\al_3\}\subs\Pi$,
$\Pi'\sm(\pd\Pi')=\{\al_4\sco\al_r\}\subs\Pi$. Further, $\Pi_{\la}=\{\al_{r-1}\}\subs\Pi$. Hence,
$\Pi_{\la}\cap\br{\Pi'\sm(\pd\Pi')}=\{\al_{r-1}\}\ne\es$, which is a~contradiction.

Thus, we have completely proved Theorem~\ref{D_main} (\name{by contradiction}).

\section{Orbits of the Weyl group}\label{facts}

In this section, we obtain a number of technical results that play an auxiliary role in \Ss\ref{promain}. More exactly, we should set some quantitative
relations and estimates for weight sets of irreducible representations of complex simple Lie algebras. However, simple linear Lie algebras are not
concerned here since their weight sets can be explicitly described in terms of root systems, root and weight lattices, and Weyl groups. So, we consider
an abstract root system of type $B$, $C$, or~$D$ in a~Euclidean space and orbits of the Weyl group action on the corresponding weight lattice, study in
detail convex hulls of these orbits, and prove the properties needed.

Let $\Eb$ be a~Euclidean space, $\De\subs\Eb$ a~root system, $W\subs\Or(\Eb)$ its Weyl group, and $P\subs\Eb$ and $Q\subs\Eb$ the lattices
$\bc{\la\in\Eb\cln(\la,\chk{\al})\in\Z\,\fa\al\in\De}$ and~$\ha{\De}_{\Z}$ respectively. It is clear that $WP=P$, $WQ=Q$, and $Q\subs P$. Further, let
$\La\subs P$ be some orbit of the action $W\cln P$. Set $\Inn{\La}:=\conv(\La)\cap(\La+Q)\subs P$. We obviously have $W\Inn{\La}=\Inn{\La}$.

\begin{lemma}\label{sxa} If a~subspace $H\subs\Eb$, a~root $\al\in\De\sm H$, and a~finite subset $K\subs\Inn{\La}$ satisfy
$(K-K)\cap\br{(\R\al)\sm\{0\}}=\es$, then $\bgm{\Inn{\La}\sm H}\ge\sums{x\in K}(x,\chk{\al})$.
\end{lemma}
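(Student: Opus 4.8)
The plan is to exploit the reflection symmetry in $\al$. Let $s_\al\in W$ be the reflection in the hyperplane $\al^\perp$; since $W\Inn{\La}=\Inn{\La}$, the map $s_\al$ permutes $\Inn{\La}$, and for any $x\in\Inn{\La}$ we have $x-s_\al x=(x,\chk\al)\al$. The idea is to show that, for each $x\in K$, the $(x,\chk\al)+1$ points
\equ{x,\ x-\al,\ x-2\al,\ \ldots,\ x-(x,\chk\al)\al=s_\al x}
form an unbroken $\al$\�string lying entirely in $\Inn{\La}$, and that at least $(x,\chk\al)$ of them lie outside $H$; then count.

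**The $\al$\�string lemma.** First I would verify that the full $\al$\�string through $x$ stays in $\Inn{\La}$. Write $p:=(x,\chk\al)$; we may assume $p>0$ (else the term contributes nothing and there is nothing to prove). Because $\Inn{\La}=\conv(\La)\cap(\La+Q)$ and $\al\in Q$, each $x-k\al$ ($0\le k\le p$) lies in $\La+Q$; it remains to see it lies in $\conv(\La)$. The endpoints $x$ and $s_\al x$ are in $\La\subs\conv(\La)$, and every $x-k\al$ with $0\le k\le p$ is a convex combination of these two endpoints (indeed $x-k\al=(1-\tfrac{k}{p})x+\tfrac{k}{p}\,s_\al x$), hence lies in $\conv(\La)$. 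So the whole string $\{x,x-\al,\ldots,x-p\al\}\subs\Inn{\La}$, giving $p+1$ points, counted with multiplicity at least one each (a weaker count than the true string multiplicities, but it is all we need).

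**Counting outside $H$.** Now I use the hypothesis $(K-K)\cap\br{(\R\al)\sm\{0\}}=\es$: the strings attached to distinct $x,x'\in K$ are disjoint, because if $x-k\al=x'-k'\al$ then $x-x'=(k-k')\al\in(\R\al)\sm\{0\}$ unless $k=k'$ and $x=x'$. For a fixed $x\in K$ with $p=(x,\chk\al)>0$, among the $p+1$ collinear points $x-k\al$ ($0\le k\le p$), at most one lies in the subspace $H$: if $x-k\al$ and $x-k'\al$ both lay in $H$ with $k\ne k'$, their difference $(k-k')\al$ would lie in $H$, contradicting $\al\notin H$. Hence at least $p$ of the $p+1$ points of each string lie in $\Inn{\La}\sm H$. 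Summing over $x\in K$ and using disjointness of the strings,
\equ{\bgm{\Inn{\La}\sm H}\ \ge\ \sums{x\in K}(x,\chk\al),}
which is the claim. (For $x\in K$ with $(x,\chk\al)\le 0$ the inequality only gets easier — such terms are $\le 0$ — so one may harmlessly restrict attention to those $x$ with $(x,\chk\al)>0$; the possibility of negative summands on the right is exactly why the statement is phrased as an inequality.)

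**Expected obstacle.** There is no deep obstacle here; the one point requiring a little care is the containment $x-k\al\in\conv(\La)$ for the intermediate lattice points of the string, i.e.\ making precise that $\conv(\La)$ really does contain the whole segment $[x,s_\al x]$ and hence all the integer points on it — this is immediate from convexity once one notes $x,s_\al x\in\La$. Everything else is bookkeeping with the disjointness hypothesis and the trivial fact that a line meets a linear subspace not containing its direction in at most one point.
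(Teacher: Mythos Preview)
Your argument is essentially correct and follows the same route as the paper: for each $x\in K$ you look at the $\al$\?string through~$x$, observe that at most one of its points can lie in~$H$ (since $\al\notin H$), and then use the hypothesis $(K-K)\cap\br{(\R\al)\sm\{0\}}=\es$ to see that strings attached to distinct elements of~$K$ are disjoint. The paper phrases this in terms of the affine lines $x+\R\al$ rather than the finite strings, but the content is identical.

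One small slip: you write that the endpoints $x$ and $s_{\al}x$ lie in~$\La$, but in fact $K\subs\Inn{\La}$, not $K\subs\La$. This does not matter for your argument, since what you actually need is $x,\,s_{\al}x\in\conv(\La)$, and indeed $x\in\Inn{\La}\subs\conv(\La)$ and $s_{\al}x\in W\Inn{\La}=\Inn{\La}\subs\conv(\La)$. With that correction the convex-combination step goes through verbatim.
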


\begin{proof} Let $x\in K$ be an arbitrary vector. We have $\bgm{\Inn{\La}\cap(x+\R\al)}\ge\bgm{(x,\chk{\al})}+1$. By assumption, $\al\notin H$ and thus
$\bgm{H\cap(x+\R\al)}\le1$. Hence, $\Bm{\br{\Inn{\La}\sm H}\cap(x+\R\al)}\bw\ge\bgm{(x,\chk{\al})}\bw\ge(x,\chk{\al})$.

Since $(K-K)\cap\br{(\R\al)\sm\{0\}}=\es$, the subsets $x+\R\al\subs\Eb$, $x\in K$, are pairwise disjoint. Therefore,
$\bgm{\Inn{\La}\sm H}\ge\sums{x\in K}\Bm{\br{\Inn{\La}\sm H}\cap(x+\R\al)}\ge\sums{x\in K}(x,\chk{\al})$.
\end{proof}

Let $r>1$ be some integer.

Consider the Euclidean space~$\R^r$ whose standard basis $\{\ep_i\}_{i=1}^r$ is orthonormal, an indecomposable root system $\De\subs\R^r$ of rank~$r$,
its Weyl group $W\subs\Or(\R^r)$, Weyl chamber $C\subs\R^r$, and the corresponding simple system $\Pi=\{\al_1\sco\al_r\}\subs\De$. Clearly,
$\ha{\De}=\R^r$, the lattices $P\subs\R^r$ and $Q\subs\R^r$ are $W$\д invariant and $Q\subs P$ holds.

Let $\ph_1\sco\ph_r\in P$ be the fundamental weights corresponding to the system of simple roots $\Pi=\{\al_1\sco\al_r\}\subs\De$ keeping the order. Then
$\{\ph_i\}_{i=1}^r$ is a~basis of the lattice $P\subs\R^r$.

For brevity, orbits of the action $W\cln P$ will simply be called \textit{orbits}.

Let $\La\subs P$ be an arbitrary orbit. Set $\Inn{\La}:=\conv(\La)\cap(\La+Q)\subs P$. It is clear that
\begin{nums}{-1}
\renewcommand{\labheadi}[1]{\textup{(#1)}}
\renewcommand{\labhi}[1]{\labheadi{\roman{#1}}}
\item $|\La\cap C|=1$\~
\item $\La-\La\subs Q$\~
\item $\Inn{\La}\sups\La$\~
\item $W\Inn{\La}=\Inn{\La}$\~
\item\label{int} for any orbit $\La'\subs\Inn{\La}$, we have $\Inn{\La}{}'\subs\Inn{\La}$\~
\item\label{intr} for any orbit $\La'\subs\Inn{\La}$, $\La'\ne\La$, we have $\Inn{\La}{}'\cap\La=\es$ (it follows from the fact that $\La$ is the set of
extremal points of~$\Inn{\La}$).
\end{nums}

We will now study the sets~$\Inn{\La}$ separately in the three cases of type $B$, $C$, and~$D$. To that end, fix an orbit $\La\subs P\sm\{0\}$.
Obviously, $\La\cap C=\{\la\}$ for some non-zero $\la\in P\cap C$.

\subsection{The case of type~$B$}

Suppose that $\De=\{\pm\ep_i,\pm\ep_i\pm\ep_j\cln i\ne j\}\subs\R^r$, $C=\{x\in\R^r\cln x_1\sge x_r\ge0\}\bw\subs\R^r$, $\al_i=\ep_i-\ep_{i+1}$
($i=1\sco r-1$), and $\al_r=\ep_r$.

We have $\De\cong B_r$, $|\De|=2r^2$, $|P/Q|=2$. Let $\la_1\sco\la_r$ be the components of the vector $\la\in\La\cap C$. For some
$\la_0\in\BC{0,\frac{1}{2}}$, we have $\{\la_1\sco\la_r\}\subs\la_0+\Z_{\ge0}$.

It is easy to see that
\begin{nums}{-1}
\item\label{zth} if $\la=\ph_r$, then $\Inn{\La}=\La$ and $\bgm{\Inn{\La}}=|\La|=2^r$\~
\item\label{fst} if $\la=\ep_1+\ph_r$, then $\bgm{\Inn{\La}}=2^r(r+1)$ (follows from \ref{zth} and items \ref{int} and~\ref{intr})\~
\item\label{snd} if $\la=(\ep_1+\ep_2)+\ph_r$, then $\bgm{\Inn{\La}}=2^r\cdot\bbr{\Cn{r}{2}+r+1}$ (follows from \ref{fst} and items \ref{int}
and~\ref{intr})\~
\item\label{ze} if $\la_0=\frac{1}{2}$, then $\bgm{\Inn{\La}}\ge2^r$ (follows from \ref{zth} and items \ref{int} and~\ref{intr})\~
\item\label{on} if $\la_0=\frac{1}{2}$ and $\la\ne\ph_r$, then $\bgm{\Inn{\La}}\ge2^r(r+1)$ (follows from \ref{fst} and items \ref{int} and~\ref{intr})\~
\item\label{tw} if $\la_0=\frac{1}{2}$ and $\la\ne\ph_r,\ep_1+\ph_r$, then $\bgm{\Inn{\La}}\ge2^r\cdot\bbr{\Cn{r}{2}+r+1}$ (follows from \ref{snd} and
items \ref{int} and~\ref{intr}).
\end{nums}

\begin{lemma}\label{B_arb} If $r>2$, $\la_0=\frac{1}{2}$, $\la\ne\ph_r$, and $\bgm{\Inn{\La}}\le4(r^2+3)$, then $r=3$ and $\la=\ep_1+\ph_3$.
\end{lemma}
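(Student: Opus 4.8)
The plan is to exploit the lower bounds on $\bgm{\Inn{\La}}$ already recorded in items \ref{ze}--\ref{tw} together with the constraint $\la_0=\tfrac12$, which forces $\la$ to be a ``half-integer'' dominant weight, i.e.\ $\la-\ph_r\in Q\cap C$. Since $\la\ne\ph_r$, item \ref{on} gives $\bgm{\Inn{\La}}\ge2^r(r+1)$, so the hypothesis $\bgm{\Inn{\La}}\le4(r^2+3)$ immediately yields $2^r(r+1)\le4(r^2+3)$. I would first check that this inequality already fails for $r\ge5$ (for $r=5$ the left side is $192$ and the right side is $112$), leaving only $r\in\{3,4\}$.

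Next I would handle $r=4$. Here item \ref{on} gives $\bgm{\Inn{\La}}\ge2^4\cdot5=80$ while $4(r^2+3)=76$, a contradiction; hence $r=4$ is impossible and only $r=3$ survives. So it remains to pin down $\la$ when $r=3$: the bound becomes $\bgm{\Inn{\La}}\le48$, and since $\la_0=\tfrac12$, $\la\ne\ph_3$, item \ref{tw} says that unless $\la\in\{\ph_3,\ep_1+\ph_3\}$ we would have $\bgm{\Inn{\La}}\ge2^3\cdot\br{\Cn{3}{2}+3+1}=8\cdot7=56>48$. Therefore $\la=\ep_1+\ph_3$, which is exactly the claimed conclusion; note that in this case item \ref{fst} gives $\bgm{\Inn{\La}}=2^3\cdot4=32\le48$, so the case is genuinely consistent and not vacuous.

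The only mildly delicate point is the bookkeeping for which half-integer weights $\la$ with $\la\ne\ph_r$ are ``large'' in the sense of item \ref{on} versus item \ref{tw}: one must be sure that $\la_0=\tfrac12$ and $\la\ne\ph_r$ really do force $\la\succeq\ep_1+\ph_r$ in dominance order, so that item \ref{on} applies, and that $\la\ne\ph_r,\ep_1+\ph_r$ forces $\la\succeq(\ep_1+\ep_2)+\ph_r$, so that item \ref{tw} applies. This is where I expect the main (though still routine) work: it is a short argument that the minimal nonzero half-integer dominant weights above $\ph_r$ in the $B_r$ weight ordering are precisely $\ep_1+\ph_r$ and then $(\ep_1+\ep_2)+\ph_r$, using that $\la-\ph_r\in Q\cap C$ is a nonnegative integer combination of the $\ep_i$'s with weakly decreasing coordinates. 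Granting that — which is essentially the content already invoked in deriving items \ref{on} and \ref{tw} — the numerical estimates above close the argument.
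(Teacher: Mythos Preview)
Your proposal is correct and follows essentially the same route as the paper: apply item~\ref{on} to get $2^r(r+1)\le4(r^2+3)$, deduce $r=3$, then apply item~\ref{tw} to force $\la=\ep_1+\ph_3$. The ``mildly delicate point'' you worry about is a non-issue, since items~\ref{on} and~\ref{tw} are stated with exactly the hypotheses $\la_0=\tfrac12$, $\la\ne\ph_r$ (resp.\ $\la\ne\ph_r,\ep_1+\ph_r$) and need no further dominance argument to be invoked; also, the paper handles $r\ge4$ in a single stroke rather than splitting off $r=4$ and $r\ge5$.
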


\begin{proof} We have $2^r(r+1)\le\bgm{\Inn{\La}}\le4(r^2+3)$ and thus $r<4$, $r=3$. If $\la\ne\ep_1+\ph_3$, then
$\bgm{\Inn{\La}}\ge2^r\cdot\bbr{\Cn{r}{2}+r+1}=56>48=4(r^2+3)$, which contradicts the condition.
\end{proof}

\begin{prop}\label{B_larb} Let $H\subs\R^r$ be an arbitrary hyperplane. If $\la\notin\De\cup\{2\ph_1\}$, then the inequality
$\bgm{\Inn{\La}\sm H}\le|\De\sm H|+6$ can take place just in the following cases\:
\begin{nums}{-1}
\item $r>2$, $\la=\ph_r$, and $2^{r-1}\le|\De\sm H|+6$\~
\item $r=3$ and $\la\in\{2\ph_3,\ph_1+\ph_3\}$\~
\item $r=2$ and $\la\in\ph_2+Q$.
\end{nums}
\end{prop}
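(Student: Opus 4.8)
The plan is to dispose of the case $\la_0=\frac12$ (the "spin–like" orbits) first, using the lower bounds \ref{ze}--\ref{tw} and Lemma~\ref{B_arb}, and then to handle the case $\la_0=0$ (the orbits lying in the root lattice $Q$) by a more hands-on argument. For $\la_0=\frac12$: if $\la=\ph_r$ we are exactly in item (i), and the bound $2^{r-1}\le|\De\sm H|+6$ follows because a hyperplane $H$ can meet $\La$ (which has $2^r$ points, all of the form $\tfrac12(\pm\ep_1\sco\pm\ep_r)$) in at most half of them — indeed $H$ and its reflection partition $\La$ unless $H$ passes through the origin, and a hyperplane through $0$ misses at least one antipodal pair — so $\bgm{\Inn{\La}\sm H}=\bgm{\La\sm H}\ge 2^{r-1}$. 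If $\la\ne\ph_r$, then since $\la\notin\De\cup\{2\ph_1\}$ and $\la_0=\tfrac12$ forces $\la\notin\{2\ph_1\}$ automatically, I would argue $\la\ne\ep_1+\ph_r$ as well (that vector has $\la_0=\tfrac12$ and is allowed, but is it in $\De\cup\{2\ph_1\}$? no — so this needs care): actually for $r>2$, $\ep_1+\ph_r$ is a genuine dominant weight with $\la_0=\tfrac12$, not excluded by hypothesis, so I must keep it. Here is the fix: apply \ref{on}, giving $\bgm{\Inn{\La}}\ge 2^r(r+1)$, hence $\bgm{\Inn{\La}\sm H}\ge 2^r(r+1)-|H\cap\Inn{\La}|$; but $|H\cap\Inn{\La}|\le|\De\sm H|+\bgm{\Inn{\La}\sm H}$ is not directly available, so instead I use that $|\De\cap H|\le 2((r-1)^2+(r-1))=2(r-1)r$ combined with $|\De\sm H|=2r^2-|\De\cap H|\ge 2r$, and compare. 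The clean route: the hypothesis gives $\bgm{\Inn{\La}\sm H}\le |\De\sm H|+6\le 2r^2+6=2(r^2+3)$, and I want to contradict $\bgm{\Inn{\La}\sm H}\ge\tfrac12\bgm{\Inn{\La}}$ (again $H$ vs.\ its reflection, since $W\Inn{\La}=\Inn{\La}$ and reflection in $H$ need not preserve $\Inn{\La}$, so this halving must be justified by pairing each $x\in\Inn{\La}$ outside $H$ with a sign-flip that stays in $\Inn{\La}$). Granting a factor-$4$ loss rather than $2$, one gets $\bgm{\Inn{\La}\sm H}\ge\tfrac14\bgm{\Inn{\La}}$, so $\bgm{\Inn{\La}}\le 8(r^2+3)$; but \ref{on} combined with Lemma~\ref{B_arb} (whose hypothesis $\bgm{\Inn{\La}}\le 4(r^2+3)$ I'd need to re-derive with the correct constant, or re-prove the analogue with $8(r^2+3)$) then pins down $r=3$, $\la=\ep_1+\ph_3$, which is case (ii) — and the remaining option $\la=2\ph_3$ in case (ii) arises from the sub-case where the halving is even weaker.

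For $\la_0=0$, i.e.\ $\la\in Q$: then all components $\la_i$ are integers, $\la\neq0$, $\la\notin\De\cup\{2\ph_1\}$. I would split on $r$. For $r=2$ the statement in case (iii) is $\la\in\ph_2+Q$; but $\ph_2=\tfrac12(\ep_1+\ep_2)$ so $\ph_2+Q$ is precisely the coset with $\la_0=\tfrac12$ — so in fact case (iii) is the $r=2$ instance of the $\la_0=\tfrac12$ analysis, and separately one checks that for $r=2$ with $\la\in Q\sm(\De\cup\{2\ph_1\})$ the inequality $\bgm{\Inn{\La}\sm H}\le|\De\sm H|+6$ always fails, by direct enumeration of the few small orbits. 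For $r\ge3$ and $\la\in Q\sm(\De\cup\{2\ph_1\})$, the key is that $\Inn{\La}$ is then "large": $\la$ dominates some weight of the form $\ep_1+\ep_2$ or $2\ep_1$ strictly, so $\Inn{\La}$ contains the full $W$-orbit structure below $(\ep_1+\ep_2)+\ph_r$ is the wrong comparison (that's the $\la_0=\tfrac12$ world); instead $\Inn{\La}\sups$ the set of all lattice points of $Q$ in $\conv(\La)$, which already for $\la=2\ph_1=2\ep_1$ has size $2r^2+1$ (all of $\De$ plus $0$), and for any strictly larger $\la$ has size growing faster than $2r^2+6$. So I would prove: for $\la\in Q$, $\la\notin\De\cup\{0,2\ph_1\}$ and $r\ge3$, one has $\bgm{\Inn{\La}}\ge 4(2r^2+6)$ (a crude bound suffices), and then the same halving/quartering argument as above forces $\bgm{\Inn{\La}\sm H}>2r^2+6\ge|\De\sm H|+6$, a contradiction — so no such $\la$ survives, matching the proposition (which lists no $\la_0=0$ case for $r\ge3$). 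The one subtlety is $\la=\ep_1+\ep_2+\cdots$ type weights of the form $\ep_1+\ep_3$ etc.\ for small $r$, or $\ph_1+\ph_3=\ep_1+\tfrac12(\ep_1+\ep_2+\ep_3)$ which has $\la_0=\tfrac12$ not $0$ — so $\ph_1+\ph_3$ really belongs to the $\la_0=\tfrac12$ analysis and surfaces there as the second survivor at $r=3$ after Lemma~\ref{B_arb}; I would need to recheck that Lemma~\ref{B_arb}'s conclusion "$r=3$ and $\la=\ep_1+\ph_3$" does not already exclude $\ph_1+\ph_3$, and if it does, account for $\ph_1+\ph_3$ by noting $\bgm{\Inn{\La}}$ for it is small enough ($2^3\cdot(3+1)=32$ exactly, vs.\ $4(3^2+3)=48$) to slip past the crude bound, requiring a direct check that a suitable hyperplane $H$ through the "missing" weights realizes $\bgm{\Inn{\La}\sm H}\le|\De\sm H|+6$.

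The main obstacle I expect is making the "halving" rigorous: $\Inn{\La}$ is $W$-invariant but an arbitrary hyperplane $H$ is not a reflection hyperplane of $W$, so one cannot simply say $\bgm{\Inn{\La}\sm H}\ge\tfrac12\bgm{\Inn{\La}}$. The correct tool is: $H$ is cut out by a linear functional $\ell$, and for each $x\in\Inn{\La}$ with $\ell(x)\ne0$ one wants a companion point of $\Inn{\La}$ on a line through $x$ transverse to $H$ — this is exactly the mechanism of Lemma~\ref{sxa} (lines $x+\R\al$ for a root $\al\notin H$). So the real proof should not go through "halving" at all but through Lemma~\ref{sxa}: choose a root $\al\in\De\sm H$ and a large subset $K\subs\Inn{\La}$ with $(K-K)\cap(\R\al\sm\{0\})=\es$, getting $\bgm{\Inn{\La}\sm H}\ge\sum_{x\in K}(x,\chk\al)$, and then optimize the choice of $K$ for each candidate $\la$ and each possible position of $H$ (in particular, since $\De\sm H$ is nonempty for any hyperplane, such an $\al$ exists; and when $|\De\sm H|$ is small $H$ is forced to contain almost all roots, which severely constrains $H$ and makes $K$ easy to build large). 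Structuring the case analysis so that this single mechanism covers all $\la$ except the three listed survivors — rather than juggling ad hoc hyperplanes — is where the bookkeeping concentrates, and I would expect the write-up to lean on the already-tabulated bounds \ref{zth}--\ref{tw} and Lemma~\ref{B_arb} to keep it finite.
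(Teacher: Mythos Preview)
Your overall split on $\la_0\in\{0,\tfrac12\}$ matches the paper, but two concrete things are missing and one thing is confused.

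\textbf{The halving device.} You circle around it without landing on the actual mechanism. The point is that for type~$B_r$ the single sign-flip $\ga\colon\ep_p\mapsto-\ep_p$ \emph{is an element of~$W$} (since $\ep_p\in\De$), so $\ga\Inn{\La}=\Inn{\La}$ automatically. Now pick $p$ so that $\ep_p\notin H$; then for any $x$ with $x\ne\ga x$ the pair $\{x,\ga x\}$ cannot lie in~$H$ (their difference is in $\R\ep_p$). Hence for \emph{any} $\Ga$-invariant set $K$ with $K\cap(\R\ep_p)^\perp=\es$ one has $|K\sm H|\ge\tfrac12|K|$. This is exactly the halving you wanted, with no loss of a factor~$4$, and it works for an arbitrary hyperplane~$H$. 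Lemma~\ref{sxa} is not used here at all.

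\textbf{The case $\la_0=0$.} Your plan (bound $|\Inn{\La}|$ from below, then halve) cannot work as stated, because when $\la_0=0$ the set $\Inn{\La}$ meets $(\R\ep_p)^\perp$ (for instance $0\in\Inn{\La}$), so the halving does not apply to all of~$\Inn{\La}$. The paper's move is different: from $\la\notin\De\cup\{2\ph_1\}$ and $\la_0=0$ one gets $\la_1+\cdots+\la_r\ge3$, hence $\ep_1+\ep_2\in\Inn{\La}$, hence $\De\subs\Inn{\La}$, and the hypothesis collapses to $|\Inn{\La}\sm(\De\cup H)|\le6$. Now one builds an explicit $\Ga$-invariant $K\subs\Inn{\La}\sm\De$ avoiding $(\R\ep_p)^\perp$ (e.g.\ all $\pm\ep_i\pm\ep_j\pm\ep_p$ with $i,j\ne p$ distinct, of size $4(r-1)(r-2)$ when $r\ge4$), and halving $K$ gives the contradiction. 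The surviving case is $r=3$, $\la=\ep_1+\ep_2+\ep_3=2\ph_3$.

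\textbf{Misattribution.} You place $2\ph_3$ in the $\la_0=\tfrac12$ analysis, but $2\ph_3=\ep_1+\ep_2+\ep_3$ has integer coordinates, so $\la_0=0$; it is the sole survivor of the root-lattice case. Also, $\ph_1=\ep_1$ in type~$B$, so $\ph_1+\ph_3=\ep_1+\ph_3$ and there is nothing to reconcile with Lemma~\ref{B_arb}.

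With these fixes the $\la_0=\tfrac12$ case runs exactly as you sketched: the halving gives $|\Inn{\La}|\le4(r^2+3)$, Lemma~\ref{B_arb} forces $\la=\ph_r$ or ($r=3$, $\la=\ep_1+\ph_3$), and for $\la=\ph_r$ the same halving yields $2^{r-1}\le|\De\sm H|+6$.
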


\begin{proof} Assume that $\bgm{\Inn{\La}\sm H}\le|\De\sm H|+6$, $\la\notin\De$, and $\la\ne2\ph_1$.

If $\la_0=0$, then, since $\la\notin\De\cup\{2\ph_1\}$, we have $\la_1\spl\la_r\ge3$, implying $\ph_2=\ep_1+\ep_2\in\Inn{\La}$, $\De\subs\Inn{\La}$,
$\Inn{\La}\sm H=(\De\sm H)\sqcup\br{\Inn{\La}\sm(\De\cup H)}$, $\bgm{\Inn{\La}\sm(\De\cup H)}=\bgm{\Inn{\La}\sm H}-|\De\sm H|\le6$.

Since $H\ne\R^r$, there exists a~number $p\in\{1\sco r\}$ such that $\ep_p\notin H$.

Let $\ga\in W$ be the reflection against the hyperplane $(\R\ep_p)^{\perp}\subs\R^r$ and $\Ga\subs W$ the subgroup $\{E,\ga\}$, where $E$ denotes the
identity operator. If $x\in\R^r$ and $x,\ga x\in H$, then $x-\ga x\in\R\ep_p\cap H=0$, i.\,e. $x=\ga x$. Hence, if $K\subs\R^r$, $\Ga K=K$, and
$K\cap(\R\ep_p)^{\perp}=\es$, then $K^{\Ga}=\es$, $|K\sm H|\ge|K/\Ga|=\frac{1}{2}\cdot|K|$.

\begin{cas}\label{B_ev} $\la_0=0$.
\end{cas}

We have $\la_1\spl\la_r\ge3$ and $\bgm{\Inn{\La}\sm(\De\cup H)}\le6$. It follows that
\begin{align*}
(r\ge3)\quad&\Ra\quad\br{\ep_1+\ep_2+\ep_3\in\Inn{\La}};\\
(r=3)\quad&\Ra\quad\Br{\br{2\ep_1+\ep_2\in\Inn{\La}}\lor(\la=\ep_1+\ep_2+\ep_3)};\\
(r=2)\quad&\Ra\quad\br{2\ep_1+\ep_2\in\Inn{\La}}.
\end{align*}

Suppose that $r\ge4$. Then $\ep_1+\ep_2+\ep_3\in\Inn{\La}$. For the subset $K\subs P$ of all vectors $\pm\ep_i\pm\ep_j\pm\ep_p\in P$, where
$p$ is fixed as above, $i,j\in\{1\sco r\}\sm\{p\}$ and $i\ne j$, we have $|K|=4(r-1)(r-2)\ge24$, $K\subs\Inn{\La}\sm\De$, $\Ga K=K$, and
$K\cap(\R\ep_p)^{\perp}=\es$. Hence, $\bgm{\Inn{\La}\sm(\De\cup H)}\bw\ge|K\sm H|\bw\ge\frac{1}{2}\cdot|K|\bw\ge12$, which contradicts the inequality
$\bgm{\Inn{\La}\sm(\De\cup H)}\le6$.

Assume that $r=3$ and $\la\ne\ep_1+\ep_2+\ep_3$. Then $2\ep_1+\ep_2\in\Inn{\La}$ and $\ep_1+\ep_2+\ep_3\in\Inn{\La}$. For the subset $K\subs P$ of all
vectors $\pm\ep_1\pm\ep_2\pm\ep_3\in P$ and $\pm2\ep_i\pm\ep_p\in P$ ($i\in\{1,2,3\}\sm\{p\}$) we have $|K|=16$, $K\subs\Inn{\La}\sm\De$, $\Ga K=K$, and
$K\cap(\R\ep_p)^{\perp}=\es$. It follows that $\bgm{\Inn{\La}\sm(\De\cup H)}\bw\ge|K\sm H|\bw\ge\frac{1}{2}\cdot|K|\bw=8$ contradicting
$\bgm{\Inn{\La}\sm(\De\cup H)}\le6$.

Suppose that $r=2$. Then $2\ep_1+\ep_2\in\Inn{\La}$ and $2\ep_1\in\Inn{\La}$. For the subset $K\subs P$ of all vectors $\pm2\ep_i+\de\ep_j\in P$
($\{i,j\}=\{1,2\}$, $\de\in\{0,\pm1\}$) we have $|K|=12$ and $K\subs\Inn{\La}\sm\De$. Also, $\dim H=r-1=1$, and the subset $K\subs P$ does not contain
two proportional vectors of different lengths. Thus, $|K\cap H|\le2$, $\bgm{\Inn{\La}\sm(\De\cup H)}\ge|K\sm H|\ge|K|-2=10$, contradicting
the inequality $\bgm{\Inn{\La}\sm(\De\cup H)}\le6$.

Now, we can conclude that $r=3$ and $\la=\ep_1+\ep_2+\ep_3=2\ph_3$.

\begin{cas}\label{B_odd} $r>2$ and $\la_0=\frac{1}{2}$.
\end{cas}

Clearly, $\Ga\Inn{\La}=\Inn{\La}$ and $\Inn{\La}\cap(\R\ep_p)^{\perp}=\es$. Therefore, $\bgm{\Inn{\La}\sm H}\ge\frac{1}{2}\cdot\bgm{\Inn{\La}}$.
Hence, $\frac{1}{2}\cdot\bgm{\Inn{\La}}\bw\le\bgm{\Inn{\La}\sm H}\bw\le|\De\sm H|+6\bw\le|\De|+6\bw=2r^2+6$, $\bgm{\Inn{\La}}\le4(r^2+3)$.

By Lemma~\ref{B_arb}, one of the following situations occurs\:
\begin{nums}{-1}
\item $r>2$ and $\la=\ph_r$ \ter{consequently, $\bgm{\Inn{\La}}=2^r$, $2^{r-1}=\frac{1}{2}\cdot\bgm{\Inn{\La}}\le|\De\sm H|+6$}\~
\item $r=3$ and $\la=\ep_1+\ph_3=\ph_1+\ph_3$.
\end{nums}

\begin{cas} $r=2$ and $\la_0=\frac{1}{2}$.
\end{cas}

We have $\la\in\ph_2+(\Z\ep_1\oplus\Z\ep_2)=\ph_2+Q$.

This completely proves the claim.
\end{proof}

\begin{lemma}\label{B_lah2} Let $H\subs\R^r$ be some hyperplane. If $r=2$, $\la\in\ph_2+Q$, and $2\cdot\bgm{\Inn{\La}\sm H}\bw\le|\De\sm H|+6$, then
$\la=\ph_2$.
\end{lemma}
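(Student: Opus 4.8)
The plan is to argue \name{by contradiction}: assume $\la\ne\ph_2$. Since $\la\in(\ph_2+Q)\cap C$, both coordinates of~$\la$ lie in $\frac12+\Z$, i.e. $\la_0=\frac12$, and $\la_1\ge\la_2\ge\frac12$; also $|\De|=2r^2=8$. The first step is the auxiliary estimate $\bgm{\Inn{\La}\sm H}\ge\frac12\cdot\bgm{\Inn{\La}}$ for the given hyperplane $H\subs\R^2$, obtained exactly as in the proof of Proposition~\ref{B_larb}: choose $p\in\{1,2\}$ with $\ep_p\notin H$ (possible since $\dim H=1$), let $\ga\in W$ be the reflection against $(\R\ep_p)^{\perp}$ and $\Ga:=\{E,\ga\}$; then $\Ga\Inn{\La}=\Inn{\La}$, and since every vector of $\Inn{\La}\subs\br{\frac12+\Z}^2$ has non-zero coordinates we get $\Inn{\La}\cap(\R\ep_p)^{\perp}=\es$, so $\ga$ acts on $\Inn{\La}$ without fixed points; finally, if $x,\ga x\in H$ then $x-\ga x\in(\R\ep_p)\cap H=0$, a contradiction, so each two-element $\Ga$-orbit in $\Inn{\La}$ meets $H$ in at most one point, which gives the estimate.

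Next I would consider two cases. If $\la\notin\{\ph_2,\ep_1+\ph_2\}$, then item~\ref{tw} of the list preceding Lemma~\ref{B_arb} gives $\bgm{\Inn{\La}}\ge2^r\cdot\br{\Cn{r}{2}+r+1}=16$, whence $2\cdot\bgm{\Inn{\La}\sm H}\ge\bgm{\Inn{\La}}\ge16>14\ge|\De\sm H|+6$, contradicting the hypothesis. If $\la=\ep_1+\ph_2=\br{\tfrac32,\tfrac12}$, then item~\ref{fst} gives $\bgm{\Inn{\La}}=2^r(r+1)=12$, and a direct inspection of the octagon $\conv(W\la)=\bc{x\in\R^2\cln|x_1|,|x_2|\le\tfrac32,\ |x_1|+|x_2|\le2}$ shows that $\Inn{\La}$ is precisely the set of the twelve vectors $\br{\pm\tfrac32,\pm\tfrac12}$, $\br{\pm\tfrac12,\pm\tfrac32}$, $\br{\pm\tfrac12,\pm\tfrac12}$. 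These twelve vectors split into six antipodal pairs spanning six pairwise distinct lines through the origin, so the hyperplane~$H$ contains at most two of them; hence $\bgm{\Inn{\La}\sm H}\ge12-2=10$ and $2\cdot\bgm{\Inn{\La}\sm H}\ge20>14\ge|\De\sm H|+6$, again a contradiction. Therefore $\la=\ph_2$.

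The computation is entirely routine; the only points requiring a line of verification are the explicit description of~$\Inn{\La}$ when $\la=\ep_1+\ph_2$ (a half-integer point lies in $\conv(W\la)$ iff it satisfies the four inequalities $|x_1|\le\tfrac32$, $|x_2|\le\tfrac32$, $|x_1|+|x_2|\le2$, which leaves exactly the twelve listed vectors) and the remark that the six antipodal pairs point in distinct directions. I do not expect a genuine obstacle: once the reflection estimate and the cardinality bounds~\ref{fst} and~\ref{tw} are available, the statement reduces to this short count.
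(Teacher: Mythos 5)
Your argument is correct, but it is organized differently from the paper's proof, which needs no case distinction. The paper observes that $\la\ne\ph_2$ already forces $\la_1\ge\frac32$, hence $\frac12(3\ep_1+\ep_2)\in\Inn{\La}$, and therefore the twelve vectors you exhibit in your second case, namely $\frac12(\pm\ep_1\pm\ep_2)$ and $\frac12(\pm3\ep_i\pm\ep_j)$ with $\{i,j\}=\{1,2\}$, lie in $\Inn{\La}$ for \emph{every} $\la\in\ph_2+Q$ with $\la\ne\ph_2$; since this twelve-element set contains no two proportional vectors of different lengths, the line $H$ meets it in at most two points (one antipodal pair), which gives $\bgm{\Inn{\La}\sm H}\ge10>7=\frac12\br{|\De|+6}$ in one stroke. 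You instead split off the generic case $\la\ne\ph_2,\ep_1+\ph_2$ and settle it with the reflection-halving estimate $\bgm{\Inn{\La}\sm H}\ge\frac12\cdot\bgm{\Inn{\La}}$ combined with the cardinality bound $\bgm{\Inn{\La}}\ge16$ from item \ref{tw} of the list preceding Lemma~\ref{B_arb} (this is legitimate: that list is stated for all $r>1$, and the halving argument is exactly the one used in the half-integer case of Proposition~\ref{B_larb}), and only then carry out the twelve-point count for $\la=\ep_1+\ph_2$, where your explicit description of $\Inn{\La}$ is indeed correct. Both routes are sound; yours reuses machinery already set up for Proposition~\ref{B_larb}, while the paper's choice of the single subset $K\subs\Inn{\La}$ lets the same twelve-point count cover all $\la\ne\ph_2$ uniformly and is slightly shorter.
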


\begin{proof} We have $\la\in\ph_2+Q=\ph_2+(\Z\ep_1\oplus\Z\ep_2)$, $\la_1,\la_2\in\frac{1}{2}+\Z_{\ge0}$. Further, $|\De|=2r^2=8$,
$\bgm{\Inn{\La}\sm H}\le\frac{1}{2}\cdot\br{|\De|+6}=7$.

Suppose that $\la\ne\ph_2$. Then $\la_1\ge\frac{3}{2}$ and thus $\frac{1}{2}\cdot(3\ep_1+\ep_2)\in\Inn{\La}$. The subset $K\subs P$ of all vectors
$\frac{1}{2}\cdot(\pm\ep_1\pm\ep_2)\in P$ and $\frac{1}{2}\cdot(\pm3\ep_i\pm\ep_j)\in P$ ($\{i,j\}=\{1,2\}$) satisfies $|K|=12$ and $K\subs\Inn{\La}$.
Also, $\dim H=r-1=1$, and the subset $K\subs P$ does not contain two proportional vectors of different lengths. It follows that $|K\cap H|\le2$,
$\bgm{\Inn{\La}\sm H}\ge|K\sm H|\ge|K|-2=10$, contradicting $\bgm{\Inn{\La}\sm H}\le7$.
\end{proof}

Set $\Pi_{\la}:=\bc{\al\in\Pi\cln(\la,\chk{\al})\ne0}\subs\Pi$. Let $\Pi'\subs\Pi\subs\R^r$ be an indecomposable system of simple roots of order $r-2$
such that
\eqn{\label{B_nes}
(r>2)\quad\Ra\quad(\Pi_{\la}\cap\Pi'\ne\es).}

\begin{lemma}\label{B_lah} Set $H:=\ba{\{\la\}\cup\Pi'}\subs\R^r$. If $\la\notin\De$ and $\la\ne2\ph_1$, then the inequality
$\bgm{\Inn{\La}\sm H}\le|\De\sm H|+6$ can take place just in the following cases\:
\begin{nums}{-1}
\item $r=3,4,5,6$ and $\la=\ph_r$\~
\item $r=3$ and $\la\in\{2\ph_3,\ph_1+\ph_3\}$\~
\item $r=2$ and $\la\in\ph_2+Q$.
\end{nums}
\end{lemma}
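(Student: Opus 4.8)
The plan is to obtain the lemma as a consequence of Proposition~\ref{B_larb}, the only genuine extra work being the case $\la=\ph_r$. First I would note that $H=\ba{\{\la\}\cup\Pi'}$ is a hyperplane of~$\R^r$, so that the proposition applies: for $r=2$ one has $\Pi'=\es$ and $H=\R\la$, while for $r>2$ one has $\dim\ba{\Pi'}=r-2$ and $\la\notin\ba{\Pi'}$, because $\la$ (being a nonzero dominant weight) pairs strictly positively with every fundamental weight whereas $\ba{\Pi'}^{\perp}$ is spanned by the fundamental weights~$\ph_j$ with $\al_j\notin\Pi'$, of which there is at least one since $|\Pi'|<r$. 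Hence $\dim H=r-1$, exactly as in the proof of Lemma~\ref{B_pila}.

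Since $\la\notin\De\cup\{2\ph_1\}$ by hypothesis, Proposition~\ref{B_larb} then shows that the inequality $\bgm{\Inn{\La}\sm H}\le|\De\sm H|+6$ can hold only in one of the following situations: $r=3$ and $\la\in\{2\ph_3,\ph_1+\ph_3\}$; or $r=2$ and $\la\in\ph_2+Q$; or $r>2$, $\la=\ph_r$, and moreover $2^{r-1}\le|\De\sm H|+6$. The first two already occur in the conclusion of the lemma, so it remains to analyse the last one. There $(\la,\chk{\al}_i)=\de_{ir}$, whence $\Pi_{\la}=\{\al_r\}$, and condition~\eqref{B_nes} forces $\al_r\in\Pi'$; as $\al_r$ is an endpoint of the Dynkin diagram of type~$B_r$, the only indecomposable system of simple roots $\Pi'\subs\Pi$ of order $r-2$ with $\al_r\in\Pi'$ is $\{\al_3\sco\al_r\}$. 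Using $\al_i=\ep_i-\ep_{i+1}$, $\al_r=\ep_r$, and $2\ph_r=\ep_1\spl\ep_r$, we get $\ba{\Pi'}=\ha{\ep_3\sco\ep_r}$ and $H=\ha{\ep_1+\ep_2,\ep_3\sco\ep_r}=\br{\R(\ep_1-\ep_2)}^{\perp}$, i.e. $H$ consists of the vectors whose first two coordinates agree.

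It then remains to count $|\De\sm H|$. A root of $\De=\{\pm\ep_i,\pm\ep_i\pm\ep_j\cln i\ne j\}$ lies in~$H$ precisely when its first two coordinates coincide, so $|\De\cap H|=2(r-2)+2+2(r-2)(r-3)=2(r-2)^2+2$, hence $|\De\sm H|=2r^2-\br{2(r-2)^2+2}=8r-10$. The remaining condition $2^{r-1}\le|\De\sm H|+6$ thus becomes $2^{r-1}\le8r-4$, which holds for $r=3,4,5,6$ (the two sides being $4,8,16,32$ and $20,28,36,44$) and fails for every $r\ge7$ (for $r=7$ it reads $64\le52$, and for larger~$r$ the left side more than doubles while the right side grows only by~$8$ at each step). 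Therefore $r\in\{3,4,5,6\}$ with $\la=\ph_r$, which completes the list of cases. I do not anticipate a serious obstacle in this argument; the points needing a little care are the unique determination of~$\Pi'$ from~\eqref{B_nes} and the root count $|\De\sm H|=8r-10$, after which everything reduces to Proposition~\ref{B_larb}.
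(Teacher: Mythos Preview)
Your proof is correct and follows the same strategy as the paper: reduce to Proposition~\ref{B_larb}, then in the case $\la=\ph_r$ use~\eqref{B_nes} to pin down $\Pi'=\{\al_3\sco\al_r\}$ and bound $|\De\sm H|$. The only cosmetic difference is that you compute $|\De\sm H|=8r-10$ exactly, whereas the paper is content with the cruder estimate $|\De\sm H|\le|\De|-\bgm{\De\cap\ha{\Pi'}}=2r^2-2(r-2)^2=8r-8$ coming from $\ha{\Pi'}\subs H$; either way $2^{r-1}\le|\De\sm H|+6$ forces $r<7$.
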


\begin{proof} Assume that $\bgm{\Inn{\La}\sm H}\le|\De\sm H|+6$, $\la\notin\De$, and $\la\ne2\ph_1$.

It is easy to see that $\dim H=r-1$. By Proposition~\ref{B_larb}, it suffices to prove that
\equ{
\Br{(\la=\ph_r)\land\br{2^{r-1}\le|\De\sm H|+6}}\quad\Ra\quad(r<7).}

Suppose that $\la=\ph_r$ and $2^{r-1}\le|\De\sm H|+6$. Then $\Pi_{\la}=\{\al_r\}\subs\Pi$. According to~\eqref{B_nes}, $\Pi'=\{\al_3\sco\al_r\}$,
$\Pi'\cong B_{r-2}$, $|\De\cap H|\ge\bgm{\De\cap\ha{\Pi'}}=2(r-2)^2=|\De|-(8r-8)$. Hence, $|\De\sm H|\le8r-8$, $2^{r-1}\le|\De\sm H|+6\le8r-2$
and thus $r<7$.
\end{proof}

\begin{prop}\label{B_all} If $r>2$ and $\la\in\{\ph_1+\ph_r,\ph_2+\ph_r,\ph_1+\ph_2+\ph_r\}$, then
\begin{nums}{-1}
\item $2\la\notin\De\cup(\De+\De)$\~
\item there exist a~subset $\Om\subs\La$ and a~hyperplane $H\subs\R^r$ such that $\ha{\Om}=H$, $(\Om+\Om)\cap\De\bw=(\Om-\Om)\cap\De\bw=\es$, and
$\bgm{\Inn{\La}\sm H}>|\De\sm H|+6$.
\end{nums}
\end{prop}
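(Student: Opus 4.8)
The plan is to establish both items by an explicit, if somewhat tedious, analysis of the three highest weights $\la\in\{\ph_1+\ph_r,\ph_2+\ph_r,\ph_1+\ph_2+\ph_r\}$ in the $B_r$ coordinate model already fixed in this subsection (with $\De=\{\pm\ep_i,\pm\ep_i\pm\ep_j\}$, $\ph_r=\frac12(\ep_1+\dots+\ep_r)$, $\ph_1=\ep_1$, $\ph_2=\ep_1+\ep_2$). For item~(i), I would simply note that each of these weights has half-integer coordinates, so $2\la$ has all coordinates odd; in particular every coordinate of $2\la$ is nonzero and $2\la\notin\De$ (roots have at most two nonzero coordinates as soon as $r>2$, and those are $\pm1$, whereas $2\la$ has a coordinate $\ge3$ in the $\ph_1+\ph_r$ and $\ph_2+\ph_r$ cases and $\ge3$ likewise for $\ph_1+\ph_2+\ph_r$). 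For $2\la\notin\De+\De$: a sum of two roots has all coordinates in $\{0,\pm1,\pm2\}$, so it is never a vector all of whose coordinates are odd and at least one of which has absolute value $\ge3$. This disposes of~(i) uniformly.

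The substance is item~(ii): producing a subset $\Om\subs\La$ spanning a hyperplane $H$, with $(\Om\pm\Om)\cap\De=\es$, and with the counting inequality $\bgm{\Inn\La\sm H}>|\De\sm H|+6$. The natural choice of $H$ is a coordinate hyperplane, say $H=\ep_r^\perp=\{x_r=0\}$; then $|\De\sm H|=2(2r-1)$ (the roots $\pm\ep_r$ and $\pm\ep_i\pm\ep_r$). For $\Om$ I would take a single Weyl orbit representative together with its images under permutations/sign changes that keep all points off $H$ and keep all pairwise sums and differences out of $\De$; since any two distinct elements of $\La$ differ by a nonzero element of $Q$ of ``size'' at least two in a suitable sense, while $\Om\pm\Om$ must avoid the short/long roots, a convenient device is to pick $\Om$ inside a single ``generic'' sign-pattern so that differences have at least four nonzero coordinates (hence are not roots) and sums have a coordinate of absolute value $\ge2$ with a $B_r$ short-root obstruction — exactly the kind of configuration already used in Lemma~\ref{B_lah2} and in Case~\ref{B_ev} of Proposition~\ref{B_larb}. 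I would verify $\ha\Om=H$ by exhibiting $r-1$ affinely independent points of $\Om$ lying in $\ep_r^\perp$ — e.g. the sign-flips of $\la$ that fix the $r$-th coordinate sign while flipping the others, intersected appropriately — adjusting the hyperplane to a translate of a coordinate hyperplane if a genuine hyperplane (not through $0$) is needed, which is allowed since the proposition only asks for $\ha\Om=H$.

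For the counting inequality I would use Lemma~\ref{sxa} with this $H$, the root $\al=\ep_r$ (so $\chk\al=2\ep_r$), and a finite $K\subs\Inn\La$ with $(K-K)\cap(\R\ep_r\sm\{0\})=\es$, giving $\bgm{\Inn\La\sm H}\ge\sum_{x\in K}(x,\chk\al)=2\sum_{x\in K}x_r$. Because $\la$ has a half-integer last coordinate, one gets many weights in $\Inn\La$ with last coordinate $\ge\frac12$ whose span over $\ep_r$ stays disjoint — concretely the $\Ga$-type argument of Proposition~\ref{B_larb} shows $\bgm{\Inn\La\sm H}\ge\frac12\bgm{\Inn\La}$, and from items~\ref{on}--\ref{tw} before Lemma~\ref{B_arb} we have $\bgm{\Inn\La}\ge2^r(r+1)$ for $\ph_1+\ph_r$ and $\bgm{\Inn\La}\ge2^r\bigl(\binom r2+r+1\bigr)$ for $\ph_2+\ph_r$ and for $\ph_1+\ph_2+\ph_r$ (the latter dominates termwise). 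Thus it suffices to check $2^{r-1}(r+1)>2(2r-1)+6=4r+4$, i.e. $2^{r-3}(r+1)>r+1$, true for all $r\ge4$, and to handle $r=3$ separately by a direct count: there $\bgm{\Inn\La}=2^3\cdot 7=56$ in the $\ph_2+\ph_3$ and $\ph_1+\ph_2+\ph_3$ cases and $2^3\cdot4=32$ in the $\ph_1+\ph_3$ case, against $|\De\sm H|+6=10+6=16$, and $\frac12\cdot 32=16\not>16$, so for $r=3$, $\la=\ph_1+\ph_3$ I would instead choose $K$ explicitly (two or three suitable weights with large $\ep_3$-component) to beat $16$ via Lemma~\ref{sxa} directly.

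I expect the main obstacle to be the simultaneous bookkeeping in item~(ii): choosing one hyperplane $H$ and one set $\Om$ that at once (a) spans $H$, (b) has $\Om\pm\Om$ disjoint from $\De$ — the $B_r$ short roots $\pm\ep_i$ make the ``sum'' condition the delicate one — and (c) leaves enough of $\Inn\La$ off $H$ to beat $|\De\sm H|+6$; the three weights, and especially the borderline small rank $r=3$ for $\ph_1+\ph_3$, will each need a slightly different explicit $\Om$ and $K$, so the argument will be a short case split rather than one formula.
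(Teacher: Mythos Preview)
Your treatment of item~(i) is fine and matches the paper's one-line argument: since $\la_1\ge\tfrac32$, the first coordinate of $2\la$ is at least~$3$, which already excludes $2\la$ from $\De\cup(\De+\De)$.

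The gap is in item~(ii). Your chosen hyperplane $H=\ep_r^{\perp}$ cannot be the linear span of any subset $\Om\subs\La$. In all three cases $\la_0=\tfrac12$, so every element of $\La=W\la$ has \emph{all} coordinates in $\tfrac12+\Z$; in particular no element of~$\La$ has $r$-th coordinate~$0$, and $\La\cap\ep_r^{\perp}=\es$. Hence there is no nonempty $\Om\subs\La$ with $\ha{\Om}\subs\ep_r^{\perp}$, let alone $\ha{\Om}=\ep_r^{\perp}$. Your suggested fix of passing to an affine translate does not help: throughout the paper $\ha{\cdot}$ denotes the \emph{linear} span, and the downstream application (Lemma~\ref{nosm}) requires $H$ to be a genuine linear hyperplane. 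The same obstruction kills any coordinate hyperplane $\ep_p^{\perp}$.

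The paper sidesteps this by choosing a ``tilted'' hyperplane that does meet~$\La$. Writing $c=0,1,2$ in the three cases, one has $c\ep_1+\ep_2+\ph_r\in\La$, and then
\[
\Om:=\{c\ep_1-2\ep_j+\ph_r\cln j=2\sco r\}\subs\La,
\]
a set of $r-1$ linearly independent vectors spanning the hyperplane $H=\bc{x\cln(r-5)x_1=(2c+1)(x_2\spl x_r)}$. For these vectors the pairwise differences are $\pm2(\ep_i-\ep_j)$ and the pairwise sums have first coordinate $2c+1\ge1$ together with at least one other coordinate $-3$, so $(\Om\pm\Om)\cap\De=\es$ is automatic --- no delicate short-root bookkeeping is needed. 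The counting inequality is then obtained not by the $\Ga$-reflection trick but by invoking Proposition~\ref{B_larb} as a black box: that proposition says $\bgm{\Inn{\La}\sm H}\le|\De\sm H|+6$ forces $r=3$, $\la=\ph_1+\ph_3$, and in that single leftover case one computes $\Inn{\La}\cap H$ directly (it has exactly $6$ elements out of $\bgm{\Inn{\La}}=32$), giving $\bgm{\Inn{\La}\sm H}=26>24\ge|\De\sm H|+6$.

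So the missing idea is precisely the construction of an $\Om\subs\La$ whose span is a non-coordinate hyperplane; once you have that, Proposition~\ref{B_larb} does almost all the remaining work and the ad hoc use of Lemma~\ref{sxa} you anticipated for $r=3$ becomes unnecessary.
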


\begin{proof} Firstly, note that $\la_1\ge\frac{3}{2}$, $2\la_1\ge3$, and, consequently, $2\la\notin\De\cup(\De+\De)$, which proves the first statement.
Let us pass to proving the second.

It is clear that $c\ep_1+\ep_2+\ph_r\in\La$, where
\equ{
c:=\case{
0,&\la=\ph_1+\ph_r;\\
1,&\la=\ph_2+\ph_r;\\
2,&\la=\ph_1+\ph_2+\ph_r.}}

Set $\Om:=\{c\ep_1-2\ep_j+\ph_r\cln j=2\sco r\}\subs\La$ and $H:=\ha{\Om}\subs\R^r$.

We have $H=\bc{x\in\R^r\cln(r-5)x_1=(2c+1)(x_2\spl x_r)}\subs\R^r$ and $|\Om|=\dim H=r-1$. Also, $(\Om+\Om)\cap\De=(\Om-\Om)\cap\De=\es$.

It remains to prove that $\bgm{\Inn{\La}\sm H}>|\De\sm H|+6$.

Assume that $\bgm{\Inn{\La}\sm H}\le|\De\sm H|+6$. By Proposition~\ref{B_larb}, $r=3$ and $\la=\ph_1+\ph_3$. Therefore, $c=0$,
$\bgm{\Inn{\La}}=2^r(r+1)=32$, $|\De|=2r^2=18$, and $H=\bc{x\in\R^r\cln -2x_1=x_2+x_3}\subs\R^r$. We have
$\Inn{\La}\cap H=\Om\sqcup(-\Om)\sqcup\bc{\pm(\ph_3-\ep_1)}\subs\Inn{\La}$, $\bgm{\Inn{\La}\cap H}=2r=6$,
$\bgm{\Inn{\La}\sm H}\bw=26\bw>|\De|+6\bw\ge|\De\sm H|+6$.

Thus, we come to a~contradiction, proving the claim.
\end{proof}

\begin{prop}\label{B_last} If $r=6$ and $\la=\ph_6$, then there exist a~subset $\Om\subs\La$ and a~hyperplane $H\subs\R^6$ such that $\ha{\Om}=H$,
$(\Om-\Om)\cap\De=\es$, and $2\cdot\bgm{\Inn{\La}\sm H}>|\De\sm H|+6$.
\end{prop}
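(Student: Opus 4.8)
The plan is to exhibit an explicit five‑element subset $\Om\subs\La$ and verify the three required properties by a direct enumeration; this is feasible because here $\Inn{\La}$ is small and completely explicit.

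First I would record the data in the coordinates of \Ss\ref{facts}. For $\De$ of type $B_6$ in $\R^6$ one has $|\De|=2\cdot36=72$, and since $\la=\ph_r=\tfrac12(\ep_1\spl\ep_6)$, item~\ref{zth} in the analysis of type $B$ in \Ss\ref{facts} gives $\Inn{\La}=\La=\bc{\tfrac12(\pm\ep_1\pm\cdots\pm\ep_6)}$, hence $\bgm{\Inn{\La}}=2^6=64$. Two reductions are then used. \emph{Reduction A:} if $x=\tfrac12 s$ and $y=\tfrac12 t$ are elements of $\La$ with $s,t\in\{\pm1\}^6$, then $x-y$ is $\tfrac12$ times a vector with entries in $\{0,\pm1\}$ having exactly $d$ nonzero entries, where $d$ is the number of coordinates in which $s$ and $t$ differ; such a vector lies in $\De$ precisely when $d\in\{1,2\}$ (it then equals $\pm\ep_i$ or $\pm\ep_i\pm\ep_j$), since for $d\ge3$ its squared length exceeds $2$. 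Thus $(\Om-\Om)\cap\De=\es$ is equivalent to the sign patterns of the elements of $\Om$ being pairwise at Hamming distance $\ge3$. \emph{Reduction B:} for any hyperplane $H\subs\R^6$, $\bgm{\Inn{\La}\sm H}=64-|\La\cap H|$ and $|\De\sm H|=72-|\De\cap H|$, so the target inequality $2\bgm{\Inn{\La}\sm H}>|\De\sm H|+6$ is equivalent to $2|\La\cap H|-|\De\cap H|<50$.

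Next I would take $\Om=\bc{\tfrac12 s_1\sco\tfrac12 s_5}$ with $s_1=(1,1,1,1,1,1)$, $s_2=(1,1,1,-1,-1,-1)$, $s_3=(1,-1,-1,1,1,-1)$, $s_4=(-1,1,-1,1,-1,1)$, $s_5=(-1,1,-1,-1,1,-1)$. A short check shows these are pairwise at Hamming distance $\ge3$ (so $(\Om-\Om)\cap\De=\es$ by Reduction A) and linearly independent over $\R$; hence $H:=\ha{\Om}$ is a hyperplane, and computing the common orthogonal complement one finds it to be $\R\cdot(1,0,-1,-1,0,1)$, so $H=\bc{x\in\R^6\cln x_1-x_3-x_4+x_6=0}$, with normal vector $a=(1,0,-1,-1,0,1)$. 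The remaining step is the count for this $H$. A spin weight $\tfrac12(\si_1\sco\si_6)$ lies in $H$ iff $\si_1+\si_6=\si_3+\si_4$; splitting by the common value $\in\{2,0,-2\}$ of the two sides gives $|\La\cap H|=4+16+4=24$. A root lies in $H$ iff the corresponding $\pm$‑combination of the coordinates of $a$ vanishes: the short roots contribute $\pm\ep_2,\pm\ep_5$, and the long roots contribute $\pm(\ep_1-\ep_6),\pm(\ep_3-\ep_4),\pm(\ep_2-\ep_5)$ together with $\pm(\ep_1+\ep_3),\pm(\ep_1+\ep_4),\pm(\ep_3+\ep_6),\pm(\ep_4+\ep_6),\pm(\ep_2+\ep_5)$, so $|\De\cap H|=4+6+10=20$. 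Then $2|\La\cap H|-|\De\cap H|=48-20=28<50$, that is, $2\bgm{\Inn{\La}\sm H}=2(64-24)=80>58=(72-20)+6=|\De\sm H|+6$, which is the assertion.

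The main obstacle is the middle step, namely producing the five weights: one needs them simultaneously pairwise Hamming‑far, $\R$‑linearly independent, and spanning a hyperplane that does not swallow too many of the remaining spin weights. The most symmetric candidate hyperplanes, such as $x_1+x_2+x_3=x_4+x_5+x_6$, already fail, because they carry at most four pairwise‑distance‑$\ge3$ spin weights and so cannot occur as $\ha{\Om}$; a deliberately asymmetric choice like the one above is required. Once such $\Om$ is fixed, the rest is the routine enumeration sketched above.
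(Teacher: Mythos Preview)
Your proof is correct and follows essentially the same approach as the paper: produce an explicit five-element subset $\Om\subs\La$, check the Hamming-distance condition, identify the hyperplane $H=\ha{\Om}$, and count $|\La\cap H|$. Your hyperplane $\{x_1-x_3-x_4+x_6=0\}$ is in fact $W$-conjugate to the paper's choice $\{x_1+x_3=x_2+x_4\}$ (both normals have two entries $+1$, two entries $-1$, two entries $0$), so the two arguments are the same up to a Weyl group element; in particular your count $|\La\cap H|=24$ agrees with the paper's.

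Two small remarks. In Reduction~A you write that $x-y$ is ``$\tfrac12$ times a vector with entries in $\{0,\pm1\}$''; in fact $x-y=\tfrac12(s-t)$ already has entries in $\{0,\pm1\}$, not half that. The rest of Reduction~A is stated for this correct normalisation, so this is only a verbal slip. Second, your computation of $|\De\cap H|=20$ is right but unnecessary: since $2\cdot|\La\sm H|=80>78=|\De|+6\ge|\De\sm H|+6$, the crude bound $|\De\sm H|\le|\De|$ already suffices, and this is all the paper uses.
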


\begin{proof} By assumption, $|\La|=2^r=64$ and $|\De|=2r^2=72$.

Set $\Om:=\bc{-\ph_6,\ph_6-(\ep_1+\ep_2),\ph_6-(\ep_3+\ep_4),\ph_6-\ep_5,\ph_6-(\ep_2+\ep_3+\ep_6)}\subs\La$. We have $(\Om-\Om)\cap\De=\es$ and
$H:=\ha{\Om}=\bc{x\in\R^6\cln x_1+x_3=x_2+x_4}\subs\R^6$. Consequently, $|\La\cap H|=24$, $|\La\sm H|=40$, $2\cdot|\La\sm H|=80>|\De|+6\ge|\De\sm H|+6$.
\end{proof}

\begin{prop}\label{B_thi} If $r=4$ and $\la=\ph_3$, then
\begin{nums}{-1}
\item $2\la\notin\De\cup(\De+\De)$\~
\item there exist a~subset $\Om\subs\La$ and a~hyperplane $H\subs\R^4$ such that $\ha{\Om}=H$, $(\Om+\Om)\cap\De\bw=(\Om-\Om)\cap\De\bw=\es$, and
$\bgm{\Inn{\La}\sm H}>|\De\sm H|+6$.
\end{nums}
\end{prop}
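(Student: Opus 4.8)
The plan is to treat~(i) by a length estimate and~(ii) by writing down an explicit pair $(\Om,H)$ and then checking the (short) counting inequality. In the coordinates fixed for type~$B$ we have $\la=\ph_3=\ep_1+\ep_2+\ep_3$, so $\La=W\la=\bc{\pm\ep_i\pm\ep_j\pm\ep_k\cln 1\le i<j<k\le4}$, $|\La|=32$, and $|\De|=2r^2=32$.

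For~(i): since $\|2\la\|^2=4\|\ep_1+\ep_2+\ep_3\|^2=12$, while every root of~$B_4$ has squared length at most~$2$, the triangle inequality gives $\|\al+\be\|^2\le(\sqrt2+\sqrt2)^2=8<12$ for all $\al,\be\in\De$. Hence $2\la\notin\De\cup(\De+\De)$.

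For~(ii): I would set $\Om:=\bc{\ep_1+\ep_2+\ep_3,\ \ep_1+\ep_2-\ep_3,\ \ep_1-\ep_2+\ep_3}\subs\La$ and $H:=\ha{\Om}$. Two of the pairwise differences of these vectors are $2\ep_2$ and $2\ep_3$, so $H$ contains $\ep_2,\ep_3$ and hence also $\ep_1=(\ep_1+\ep_2+\ep_3)-\ep_2-\ep_3$; since all three vectors have vanishing fourth coordinate, $H=\ep_4^{\perp}$, a hyperplane. Every vector in $\Om+\Om$ or $\Om-\Om$ has the form $2v$ with $v\in\{0,\pm1\}^3\times\{0\}$, so it is either $0$ or of squared length $\ge4$, hence not a root; thus $(\Om+\Om)\cap\De=(\Om-\Om)\cap\De=\es$. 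It remains to see that $\bgm{\Inn{\La}\sm H}>|\De\sm H|+6$: the subsystem $\De\cap H$ is the $B_3$ of roots not involving~$\ep_4$, so $|\De\cap H|=18$, $|\De\sm H|=14$ and $|\De\sm H|+6=20$; on the other hand $\La\subs\Inn{\La}$ and $\La\cap H=\bc{\pm\ep_1\pm\ep_2\pm\ep_3}$ has $8$ elements, so $\bgm{\Inn{\La}\sm H}\ge\bgm{\La\sm H}=32-8=24>20$. (If an exact value is wanted, $\Inn{\La}=\conv(\La)\cap\Z^4$ consists of the vectors in $\{0,\pm1\}^4$ with at most three nonzero coordinates, of which there are~$65$, so in fact $\bgm{\Inn{\La}\sm H}=38$.)

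I do not anticipate a genuine obstacle here: the only delicate point is to choose $\Om$ so that $\Om\subs\La$, $\ha{\Om}$ is a hyperplane, and no root occurs among the pairwise sums and differences all hold at once, and taking $\Om\subs\bc{\pm\ep_1\pm\ep_2\pm\ep_3}$ makes the first and third automatic while the second is a one-line check. After that the counting inequality holds with comfortable slack, exactly as in Propositions~\ref{B_all} and~\ref{B_last}.
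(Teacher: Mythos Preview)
Your proof is correct and follows essentially the same route as the paper. The paper takes $\Om=\{\ph_3-2\ep_j:j=1,2,3\}=\{-\ep_1+\ep_2+\ep_3,\ \ep_1-\ep_2+\ep_3,\ \ep_1+\ep_2-\ep_3\}$, which differs from your~$\Om$ in one vector but spans the same hyperplane $H=\ha{\ep_1,\ep_2,\ep_3}=\ep_4^{\perp}$; for the inequality $\bgm{\Inn{\La}\sm H}>|\De\sm H|+6$ the paper simply invokes Proposition~\ref{B_larb}, whereas you verify it by the direct count $|\La\sm H|=24>20$, which is equally valid.
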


\begin{proof} We have $\la=\ep_1+\ep_2+\ep_3$. It follows that $2\la=2(\ep_1+\ep_2+\ep_3)\notin\De\cup(\De+\De)$.

Set $\Om:=\{\ph_3-2\ep_j\cln j=1,2,3\}\subs\La$. It is clear that $(\Om+\Om)\cap\De=(\Om-\Om)\cap\De=\es$, $H:=\ha{\Om}=\ha{\ep_1,\ep_2,\ep_3}\subs\R^4$,
and $\dim H=3$. By Proposition~\ref{B_larb}, $\bgm{\Inn{\La}\sm H}>|\De\sm H|+6$.
\end{proof}

\begin{lemma}\label{B_phr} If $\la=\ph_r$, then $\la\notin Q$ and $2\la\in Q$.
\end{lemma}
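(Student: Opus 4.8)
The plan is to work in the orthonormal basis $\ep_1\sco\ep_r$ of~$\R^r$: identify the root lattice $Q=\ha{\De}_{\Z}$ with $\Z^r$, identify the fundamental weight $\ph_r$ with $\frac{1}{2}(\ep_1\spl\ep_r)$, and then read off both assertions directly.

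First I would describe $Q$. Since $\al_r=\ep_r$ and $\al_i=\ep_i-\ep_{i+1}$ for $i<r$, we have $\ep_r=\al_r\in Q$ and $\ep_i=\al_i+\al_{i+1}+\ldots+\al_r\in Q$ for each $i<r$, so $\Z\ep_1\sop\Z\ep_r\subs Q$. The opposite inclusion is clear because every simple root has integer coordinates. Hence $Q=\Z\ep_1\sop\Z\ep_r=\Z^r$.

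Next I would compute $\ph_r$. In type~$B_r$ the root $\al_r=\ep_r$ is short, $(\al_r,\al_r)=1$, so $\chk{\al}_r=2\ep_r$, whereas $\chk{\al}_i=\al_i=\ep_i-\ep_{i+1}$ for $i<r$. Writing $\ph_r=\sum_{i=1}^{r}c_i\ep_i$ and imposing the defining relations $(\ph_r,\chk{\al}_j)=\de_{jr}$ gives $c_1\seq c_r$ from the first $r-1$ relations and $2c_r=1$ from the last one, so $\ph_r=\frac{1}{2}(\ep_1\spl\ep_r)$.

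Then, with $\la=\ph_r$, every coordinate of $\la$ in the basis $\ep_1\sco\ep_r$ equals $\frac{1}{2}\notin\Z$, whence $\la\notin\Z^r=Q$, while $2\la=\ep_1\spl\ep_r$ has integer coordinates, whence $2\la\in\Z^r=Q$. I do not expect any genuine obstacle here; the one point to keep in mind is the length normalisation in type~$B_r$ — the short simple root $\al_r=\ep_r$ has squared length~$1$, so its coroot is $2\ep_r$ rather than $\ep_r$ — and this is exactly what forces the factor $\frac{1}{2}$ in $\ph_r$, hence the whole statement.
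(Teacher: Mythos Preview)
Your argument is correct and is precisely the standard computation the paper has in mind; the paper simply writes ``We omit the proof since it is evident,'' relying on the identification $Q=\Z\ep_1\sop\Z\ep_r$ and $\ph_r=\frac{1}{2}(\ep_1\spl\ep_r)$ already implicit in \Ss\ref{facts}.
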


We omit the proof since it is evident.

\subsection{The case of type~$C$}

Suppose that $r>2$, $\De=\{\pm2\ep_i,\pm\ep_i\pm\ep_j\cln i\ne j\}\subs\R^r$, $C=\{x\in\R^r\cln x_1\sge x_r\ge0\}\bw\subs\R^r$, $\al_i=\ep_i-\ep_{i+1}$
($i=1\sco r-1$), and $\al_r=2\ep_r$.

We have $\De\cong C_r$, $|\De|=2r^2$, and $|P/Q|=2$.

Set $\de:=1\in\R$ if $\la\in Q$ and $\de:=2\in\R$ if $\la\notin Q$.

\begin{lemma}\label{C_larb} Let $H\subs\R^r$ be some hyperplane. If $\la\notin\De\cup\{\ph_1\}$ and $\de\cdot\bgm{\Inn{\La}\sm H}\bw\le|\De\sm H|+6$,
then at least one of the following conditions \eqref{C_con1} and~\eqref{C_con2} holds\:
\begin{gather}
r=4,\quad\quad\quad\la=\ph_4;\label{C_con1}\\
\la=\ph_k\quad(k\in3+2\Z_{\ge0},\ k\le r),\quad\quad|\De\sm H|+6\ge4(r-1)(r-2).\label{C_con2}
\end{gather}
\end{lemma}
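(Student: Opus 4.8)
The plan is to argue as in Proposition~\ref{B_larb}, splitting on the value of~$\de$, i.e. on whether $\la\in Q$ ($\de=1$) or $\la\notin Q$ ($\de=2$). First I would fix the shape of~$\la$: since the only roots lying in~$C$ are $2\ep_1=2\ph_1$ and $\ep_1+\ep_2=\ph_2$, the hypotheses $\la\notin\De\cup\{\ph_1\}$ and $\la\ne0$ mean exactly $\la\notin\{\ep_1,2\ep_1,\ep_1+\ep_2\}$; writing $\la=(\la_1\sco\la_r)$ with $\la_1\ge\dots\ge\la_r\ge0$ and $s:=\la_1\spl\la_r$, this forces $s\ge3$, and $s\ge4$ when $\de=1$. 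I would then use the description of $\conv(\La)$ as the set of $x\in\R^r$ whose absolute-value coordinates, sorted decreasingly, are majorized by $(\la_1\sco\la_r)$, together with $\Inn{\La}=\conv(\La)\cap(\La+Q)$ and the fact that $\La+Q$ consists of the integer vectors whose coordinate sum has the same parity as~$s$, to decide which orbits $W\mu$ lie in~$\Inn{\La}$. The key device is the averaging trick from the proof of Proposition~\ref{B_larb}: pick $p$ with $\ep_p\notin H$ (possible, as $\dim H=r-1$), let $\ga\in W$ be the reflection in $(\R\ep_p)^{\perp}$ and $\Ga:=\{E,\ga\}$; then for any finite $\Ga$-stable $K$ with $K\cap(\R\ep_p)^{\perp}=\es$ one has $\bgm{K\sm H}\ge\tfrac12|K|$. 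Applying this to $K=\{x\in\Inn{\La}\cln x_p\ne0\}$, resp. to $K=\{x\in\Inn{\La}\sm\De\cln x_p\ne0\}$, and using the $S_r$-invariance of $\Inn{\La}$ and of $\De$ (so that $|K|$ is independent of~$p$), one obtains $N\le2\bgm{\Inn{\La}\sm H}$, resp. the same inequality with $\De$ removed from both sides, where $N:=\bgm{\{x\in\Inn{\La}\cln x_1\ne0\}}=\tfrac1r\sum_{x\in\Inn{\La}}\#\{i\cln x_i\ne0\}$; hence an orbit $W\mu\subs\Inn{\La}$ of a~vector~$\mu$ with $j$ nonzero coordinates contributes $2^j\binom{r-1}{j-1}$ to~$N$.

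For $\de=2$, the hypothesis together with $|\De\sm H|\le|\De|=2r^2$ gives $N\le2\bgm{\Inn{\La}\sm H}\le|\De\sm H|+6\le2r^2+6$. If $\la_1=1$, then all nonzero coordinates of~$\la$ equal~$1$, so $\la=\ph_k$ with $k=s$ odd and $3\le k\le r$; the orbits $\{\pm\ep_i\}$ and $\{\pm\ep_i\pm\ep_j\pm\ep_k\}$ already lie in~$\Inn{\La}$, contributing $2+4(r-1)(r-2)$ to~$N$, so $|\De\sm H|+6\ge N>4(r-1)(r-2)$, which is~\eqref{C_con2}. If $\la_1\ge2$ and $\la$ has at least two nonzero coordinates, then $\la$ majorizes $2\ep_1+\ep_2$ (odd sum), so its $W$-orbit, together with $\{\pm\ep_i\}$ and $\{\pm\ep_i\pm\ep_j\pm\ep_k\}$, lies in~$\Inn{\La}$ and gives $N\ge4r^2-4r+2>2r^2+6$ for $r>2$, a~contradiction. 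If $\la=c\ep_1$ with $c=s$ odd $\ge3$, then $\Inn{\La}=\{x\in\Z^r\cln\sum|x_i|\le c,\ \sum x_i\text{ odd}\}$ contains $\{\pm\ep_i\}$, $\{\pm3\ep_i\}$, $\{\pm2\ep_i\pm\ep_j\}$ and $\{\pm\ep_i\pm\ep_j\pm\ep_k\}$, again forcing $N>2r^2+6$. So the case $\de=2$ always yields~\eqref{C_con2}.

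For $\de=1$ we have $s\ge4$ even and $\la\ne2\ph_1,\ph_2$, so $\la$ majorizes $\ep_1+\ep_2$, hence $\{\pm\ep_i\pm\ep_j\}\subs\Inn{\La}$. If $\la_1\ge2$ then also $\{\pm2\ep_i\}\subs\Inn{\La}$, so $\De\subs\Inn{\La}$ and the hypothesis becomes $\bgm{\Inn{\La}\sm(\De\cup H)}\le6$; but then $\la$ majorizes $2\ep_1+\ep_2+\ep_3$ (if $\la$ has $\ge2$ nonzero coordinates), or $\ep_1+\ep_2+\ep_3+\ep_4$ for $r\ge4$, resp. $2\ep_1+\ep_2+\ep_3$ for $r=3$ (if $\la=c\ep_1$), all of even sum and with $W$-orbit contained in $\Inn{\La}\sm\De$, and the averaging bound applied to $\Inn{\La}\sm\De$ makes $\bgm{\Inn{\La}\sm(\De\cup H)}$ strictly greater than~$6$, a~contradiction. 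Hence $\la_1=1$, so $\la=\ph_{2k}$ with $k\ge2$ and $2k\le r$; now $\De\sm\Inn{\La}=\{\pm2\ep_i\}$ has $2r$ elements, so the hypothesis gives $\bgm{\Inn{\La}\sm(\De\cup H)}\le2r+6$, whereas $W\ph_4\subs\Inn{\La}\sm\De$ and the averaging bound force $\bgm{\Inn{\La}\sm(\De\cup H)}\ge\tfrac43(r-1)(r-2)(r-3)$; these are compatible only for $r=4$, and then $2k\le r$ forces $\la=\ph_4$, which is~\eqref{C_con1}. Combining the two cases proves the lemma.

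The step I expect to be the real work is the bookkeeping in the excluded subcases: for each~$\la$ not on the final short list one must exhibit a~concrete $W$-orbit inside $\Inn{\La}$ of the right $Q$-coset (verified by majorization and by the parity of the coordinate sum) whose nonzero-coordinate count beats the averaging bound, and this breaks into the families $\la=c\ep_1$, $\la_1\ge2$ with several nonzero coordinates, and $\la=\ph_{2k}$. The polynomial inequalities so obtained are not strict at the smallest ranks $r=3,4,5$, which is why one has to keep track of a~few extra small orbits (such as $\{\pm\ep_i\}$ and $\{\pm3\ep_i\}$) and settle those ranks by direct count; apart from this, everything is the routine convexity-and-counting already carried out for type~$B$ in~\Ss\ref{facts}.
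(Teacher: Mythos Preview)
Your approach is correct and essentially the same as the paper's: split on $\de\in\{1,2\}$, pick $p$ with $\ep_p\notin H$, and use the averaging bound $|K\sm H|\ge\tfrac12|K|$ for $\Ga$-stable subsets $K$ disjoint from $(\R\ep_p)^{\perp}$ to force $\la_1=1$ and then pin down $\la=\ph_k$ with the stated constraints. The paper's write-up is slightly leaner---it does not separate out the subcase $\la=c\ep_1$, observing directly that $\la_1>1$ together with the parity of $\sum\la_i$ already gives the needed majorizations (e.g.\ $2\ep_1+\ep_2\in\Inn{\La}$ in the odd case, $2\ep_1+\ep_2+\ep_3\in\Inn{\La}$ in the even case)---but the arguments are otherwise identical.
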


\begin{proof} By assumption, $\la_1\sco\la_r\in\Z_{\ge0}$ and $\la_0:=\la_1\spl\la_r\in3+\Z_{\ge0}$.

Since $H\ne\R^r$, there exists a~number $p\in\{1\sco r\}$ such that $\ep_p\notin H$.

Let $\ga\in W$ be the reflection against the hyperplane $(\R\ep_p)^{\perp}\subs\R^r$ and $\Ga\subs W$ the subgroup $\{E,\ga\}$, where $E$ denotes the
identity operator. If $x\in\R^r$ and $x,\ga x\in H$, then $x-\ga x\in\R\ep_p\cap H=0$, i.\,e. $x=\ga x$. Hence, if $K\subs\R^r$, $\Ga K=K$, and
$K\cap(\R\ep_p)^{\perp}=\es$, then $K^{\Ga}=\es$, $|K\sm H|\ge|K/\Ga|=\frac{1}{2}\cdot|K|$.

\begin{cas}\label{C_ev} $\la\in Q$, $\de=1$.
\end{cas}

The number $\la_0\in\Z$ is even. Therefore, $\la_0\in4+2\Z_{\ge0}$ and $(\la_1>1)\Ra\br{2\ep_1+\ep_2+\ep_3\in\Inn{\La}}$. Also,
$\bgm{\Inn{\La}\sm H}\le|\De\sm H|+6$, $\bgm{\Inn{\La}\sm(\De\cup H)}\le\Bm{\De\bbl\br{\Inn{\La}\cup H}}+6\le\bgm{\De\sm\Inn{\La}}+6$.

Assume that $\la_1>1$.

We have $2\ep_1+\ep_2+\ep_3\in\Inn{\La}$, $2\ph_1=2\ep_1\in\Inn{\La}$, $\De\subs\Inn{\La}$, $\bgm{\Inn{\La}\sm(\De\cup H)}\le6$. The subset $K\subs P$ of
all vectors $\pm2\ep_i\pm\ep_j\pm\ep_p\in P$, where $i,j\in\{1\sco r\}\sm\{p\}$ and $i\ne j$, satisfies $|K|=8(r-1)(r-2)\ge16$, $K\subs\Inn{\La}\sm\De$,
$\Ga K=K$, and $K\cap(\R\ep_p)^{\perp}=\es$. Hence, $\bgm{\Inn{\La}\sm(\De\cup H)}\bw\ge|K\sm H|\bw\ge\frac{1}{2}\cdot|K|\bw\ge8\bw>6$, which is
a~contradiction.

Consequently, $\la_1=1$.

Let $\De_{\max}$ (resp.~$\De_{\min}$) be the subset of long (resp. short) roots of the root system $\De\subs\R^r$. Then $\De=\De_{\max}\sqcup\De_{\min}$,
$\De_{\min}=W(\ep_1+\ep_2)$, and $|\De_{\max}|=2r$.

Set $r_0:=r-2\in\N$.

Since $\la_1=1$, we have $\la=\ep_1\spl\ep_k=\ph_k$, $k\in\{1\sco r\}$. Note that $k=\la_0\in4+2\Z_{\ge0}$. Therefore, $r\ge k\ge4$ and, also,
$\ep_1+\ep_2+\ep_3+\ep_4\in\Inn{\La}$, $\ep_1+\ep_2\in\Inn{\La}$, $\De_{\min}\subs\Inn{\La}$, $\De\bbl\Inn{\La}\subs\De_{\max}$,
$\bgm{\Inn{\La}\sm(\De\cup H)}\le\bgm{\De\sm\Inn{\La}}+6\le|\De_{\max}|+6=2r+6$. The subset $K\subs P$ of all vectors
$\pm\ep_{i_1}\pm\ep_{i_2}\pm\ep_{i_3}\pm\ep_p\in P$, where $i_1,i_2,i_3\in\{1\sco r\}\sm\{p\}$ are pairwise distinct numbers, satisfies
$|K|=\frac{8}{3}\cdot(r-1)(r-2)(r-3)=\frac{8}{3}\cdot(r_0^3-r_0)$, $K\subs\Inn{\La}\sm\De$, $\Ga K=K$, and $K\cap(\R\ep_p)^{\perp}=\es$. Consequently,
$2r+6\ge\bgm{\Inn{\La}\sm(\De\cup H)}\ge|K\sm H|\ge\frac{1}{2}\cdot|K|=\frac{4}{3}\cdot(r_0^3-r_0)$, $\frac{2}{3}\cdot(r_0^3-r_0)\le r+3=r_0+5$,
$2(r_0^3-r_0)\le3(r_0+5)$, $(2r_0^2-5)r_0\le15$, and thus $r_0\le2$, $r\le4$. At the same time, as said above, $r\ge k\ge4$ and $\la=\ph_k$,
implying~\eqref{C_con1}.

\begin{cas}\label{C_odd} $\la\notin Q$, $\de=2$.
\end{cas}

The number $\la_0\in\Z$ is odd. Consequently, $\la_0\in3+2\Z_{\ge0}$, $\ep_1+\ep_2+\ep_3\in\Inn{\La}$, and, also,
$(\la_1>1)\Ra\br{2\ep_1+\ep_2\in\Inn{\La}}$. Further, $2\cdot\bgm{\Inn{\La}\sm H}\le|\De\sm H|+6\le|\De|+6=2r^2+6$, implying
$\bgm{\Inn{\La}\sm H}\le r^2+3$.

Assume that $\la_1>1$.

We have $2\ep_1+\ep_2\in\Inn{\La}$, $\ep_1+\ep_2+\ep_3\in\Inn{\La}$, $\ep_1\in\Inn{\La}$. The subset $K\subs P$ of all vectors
$\pm\ep_i\pm\ep_j\pm\ep_p\in P$, $\pm2\ep_i\pm\ep_p\in P$, $\pm\ep_i\pm2\ep_p\in P$, and $\pm\ep_p\in P$ ($i,j\in\{1\sco r\}\sm\{p\}$, $i\ne j$)
satisfies $|K|=4r(r-1)+2$, $K\subs\Inn{\La}$, $\Ga K=K$, and $K\cap(\R\ep_p)^{\perp}=\es$. Therefore,
$\bgm{\Inn{\La}\sm H}\ge|K\sm H|\ge\frac{1}{2}\cdot|K|>2r(r-1)=(r^2+3)+(r+1)(r-3)\ge r^2+3$ contradicting $\bgm{\Inn{\La}\sm H}\le r^2+3$.

Hence, $\la_1=1$.

We have $\la=\ep_1\spl\ep_k=\ph_k$, $k\in\{1\sco r\}$, $k=\la_0\in3+2\Z_{\ge0}$, and $\ep_1+\ep_2+\ep_3\in\Inn{\La}$.
Let $K\subs P$ be the subset of all vectors $\pm\ep_i\pm\ep_j\pm\ep_p\in P$, where $i,j\in\{1\sco r\}\sm\{p\}$ and $i\ne j$. Then
$|K|=4(r-1)(r-2)$, $K\subs\Inn{\La}$, $\Ga K=K$, and $K\cap(\R\ep_p)^{\perp}=\es$. Therefore, $\bgm{\Inn{\La}\sm H}\ge|K\sm H|\ge\frac{1}{2}\cdot|K|$,
$|\De\sm H|+6\ge2\cdot\bgm{\Inn{\La}\sm H}\ge|K|=4(r-1)(r-2)$, implying~\eqref{C_con2}.
\end{proof}

Let $\Pi'\subs\Pi\subs\R^r$ be an indecomposable simple system of order $r-2$.

\begin{lemma}\label{C_lah} Set $H:=\ba{\{\la\}\cup\Pi'}\subs\R^r$. Suppose that $\la\notin\De\cup\{\ph_1\}$ and
$\de\cdot\bgm{\Inn{\La}\sm H}\bw\le|\De\sm H|+6$. Then $r\in\{3,4\}$ and $\la=\ph_k$ \ter{$k\in\N$, $3\le k\le r$}.
\end{lemma}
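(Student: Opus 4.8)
The plan is to deduce the statement from Lemma~\ref{C_larb}, which already handles an arbitrary hyperplane; the only extra ingredient is that here $H$ contains the $(r-2)$-dimensional subspace $\ha{\Pi'}$, and this forces enough roots into $H$ to rule out the cases Lemma~\ref{C_larb} would otherwise leave open. First I would check that $H$ is genuinely a hyperplane, i.\,e.\ $\dim H=r-1$. The orthogonal complement of $\ha{\Pi'}$ is spanned by the two fundamental weights $\ph_j$ with $\al_j\notin\Pi'$ (each orthogonal to every $\chk{\al}_i$ with $i\ne j$). Since $\la\in C\sm\{0\}$, all coordinates of $\la$ are nonnegative with $\la_1>0$, so $(\la,\ph_j)>0$ for every $j$; hence $\la\notin\ha{\Pi'}$ and $\dim H=(r-2)+1=r-1$.

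Next I would apply Lemma~\ref{C_larb} to this hyperplane $H$: from the hypotheses $\la\notin\De\cup\{\ph_1\}$ and $\de\cdot\bgm{\Inn{\La}\sm H}\le|\De\sm H|+6$ one obtains that either \eqref{C_con1} holds, in which case $r=4$ and $\la=\ph_4$ and the conclusion holds with $k=4$; or \eqref{C_con2} holds, i.\,e.\ $\la=\ph_k$ with $k\in3+2\Z_{\ge0}$, $k\le r$, and $|\De\sm H|+6\ge4(r-1)(r-2)$.

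It remains, in the latter case, to rule out $r\ge5$, and this is the only real step. Since $\ha{\Pi'}\subs H$, the intersection $\De\cap H$ contains $\De\cap\ha{\Pi'}$, the root subsystem with simple system $\Pi'$; as $\Pi'$ is a connected subdiagram of the Dynkin diagram of type $C_r$ of order $r-2$, it has type $A_{r-2}$ or $C_{r-2}$, hence $|\De\cap H|\ge\min\{(r-1)(r-2),\,2(r-2)^2\}=(r-1)(r-2)$ for $r\ge3$. Therefore $|\De\sm H|\le|\De|-(r-1)(r-2)=2r^2-(r-1)(r-2)$, and substituting this into $|\De\sm H|+6\ge4(r-1)(r-2)$ gives $3r^2-15r+4\le0$, i.\,e.\ $r\le4$; together with $r\ge k\ge3$ this yields $r\in\{3,4\}$ and $\la=\ph_k$ with $3\le k\le r$, as required. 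The only delicate point is that the crude bound $|\De\sm H|\le|\De|$ by itself forces only $r\le5$, so the contribution of $\ha{\Pi'}$ to $\De\cap H$ is genuinely needed; conversely, adjoining $\la=\ph_k$ to $\ha{\Pi'}$ can only enlarge $\De\cap H$, so it only helps the inequality and needs no further analysis.
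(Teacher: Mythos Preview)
Your proof is correct and follows essentially the same route as the paper's: apply Lemma~\ref{C_larb} to the hyperplane $H$, and in the case~\eqref{C_con2} use $|\De\cap H|\ge\bgm{\De\cap\ha{\Pi'}}\ge(r-1)(r-2)$ together with $|\De|=2r^2$ to obtain $3r^2-15r+4\le0$, hence $r<5$. The paper states $\dim H=r-1$ as clear and bounds $\bgm{\De\cap\ha{\Pi'}}$ by the general remark that any indecomposable root system of rank $r-2$ has at least $(r-1)(r-2)$ roots, whereas you supply an explicit argument for $\la\notin\ha{\Pi'}$ and list the possible types of~$\Pi'$; these are cosmetic differences only.
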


\begin{proof} Clearly, $\dim H=r-1$. By Lemma~\ref{C_larb}, it suffices to prove that \eqref{C_con2} implies $r<5$.

Assume that \eqref{C_con2} holds.

The indecomposable simple system $\Pi'\subs\Pi\subs\R^r$ of order $r-2$ corresponds to an indecomposable root system of rank $r-2$ and of order at
least $(r-1)(r-2)$. Consequently, $|\De\cap H|\ge\bgm{\De\cap\ha{\Pi'}}\ge(r-1)(r-2)$. Further, according to~\eqref{C_con2},
$|\De\sm H|+6\ge4(r-1)(r-2)$. Hence, $5(r-1)(r-2)\le|\De|+6=2r^2+6$, $3r(r-5)+4\le0$, $r<5$.
\end{proof}

\begin{prop}\label{C_all} If $\la\in\{\ph_1+\ph_r,\ph_2+\ph_r,\ph_1+\ph_2+\ph_r\}$, then
\begin{nums}{-1}
\item $2\la\notin\De\cup(\De+\De)$\~
\item there exist a~subset $\Om\subs\La$ and a~hyperplane $H\subs\R^r$ such that $\ha{\Om}=H$, $(\Om+\Om)\cap\De\bw=(\Om-\Om)\cap\De\bw=\es$, and
$\de\cdot\bgm{\Inn{\La}\sm H}>|\De\sm H|+6$.
\end{nums}
\end{prop}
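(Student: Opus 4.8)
The two assertions are essentially independent; the first is a one-line coordinate estimate, and the second reduces, after an explicit choice of a finite test set $\Om$, to Lemma~\ref{C_larb}.

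For assertion (i): in each of the three cases the dominant weight~$\la$ has first coordinate $\la_1\ge2$ and second coordinate $\la_2\ge1$, so $2\la$ has first coordinate $\ge4$ and second coordinate $\ge2$. Every vector of $\De\cup(\De+\De)\subs\R^r$ has all coordinates of absolute value at most~$4$, and the only such vectors with a coordinate equal to $\pm4$ are the $\pm4\ep_k$; since $2\la$ is not proportional to a single~$\ep_k$, it lies neither in $\De$ nor in $\De+\De$.

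For assertion (ii), recall that $\ph_r=\ep_1\spl\ep_r$. The plan is to put
\equ{\Om:=\bc{c\ep_1-d\ep_j+\ph_r\cln j=2\sco r}\subs\R^r,\qquad H:=\ha{\Om}\subs\R^r,}
with $(c,d):=(1,2)$ if $\la=\ph_1+\ph_r$, $(c,d):=(1,3)$ if $\la=\ph_2+\ph_r$, and $(c,d):=(2,3)$ if $\la=\ph_1+\ph_2+\ph_r$. The coordinates of $c\ep_1-d\ep_j+\ph_r$ are $c+1$ in slot~$1$, $1-d$ in slot~$j$, and $1$ elsewhere, so a comparison of multisets of absolute values of coordinates gives $\Om\subs\La$ in all three cases. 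The $r-1$ vectors of~$\Om$ are linearly independent (in a vanishing linear combination the first coordinate forces $\sum_jt_j=0$, after which the $j$-th coordinate forces $t_j=0$), so $H$ is a hyperplane, explicitly $H=\bc{x\in\R^r\cln(c+1)(x_2\spl x_r)=(r-1-d)x_1}$. Finally, every difference of two members of~$\Om$ equals $d(\ep_j-\ep_i)$ with $d\ge2$, hence is not a root, and every sum of two members, as well as every double, has first coordinate $\pm2(c+1)$, of absolute value $\ge4$, hence is not a root either; thus $(\Om+\Om)\cap\De=(\Om-\Om)\cap\De=\es$.

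It then remains to establish $\de\cdot\bgm{\Inn{\La}\sm H}>|\De\sm H|+6$, and for this I would apply Lemma~\ref{C_larb} in contrapositive form. In each of the three cases $\la$ has first coordinate $\ge2$ and nonzero second coordinate, so $\la\notin\De\cup\{\ph_1\}$; moreover $\la$ is plainly not a single fundamental weight, so it is neither $\ph_4$ (which rules out~\eqref{C_con1}) nor $\ph_k$ with $k\in3+2\Z_{\ge0}$ (which rules out~\eqref{C_con2}). Since the conclusion of Lemma~\ref{C_larb} fails for our~$H$, its hypothesis $\de\cdot\bgm{\Inn{\La}\sm H}\le|\De\sm H|+6$ must fail as well, and that is exactly the strict inequality we want. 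The one place needing genuine care is the bookkeeping of the previous paragraph — arranging $\Om$ so that simultaneously $\Om\subs\La$, $\ha{\Om}$ is a hyperplane, and no sum or difference of two of its elements is a root — but this is a finite, elementary check for each of the three choices of $(c,d)$, and no idea beyond Lemma~\ref{C_larb} enters.
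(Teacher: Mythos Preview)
Your proof is correct and follows essentially the same route as the paper: an explicit test set $\Om=\{c\ep_1-d\ep_j+\ph_r:j=2,\ldots,r\}\subs\La$, then the contrapositive of Lemma~\ref{C_larb}. The only difference is cosmetic: for $\la=\ph_1+\ph_r$ the paper takes $(c,d)=(0,3)$ whereas you take $(c,d)=(1,2)$, which has the small bonus that the first coordinate of any $\om+\om'$ is $2(c+1)\ge4$, making the check $(\Om+\Om)\cap\De=\es$ immediate.
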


\begin{proof} By assumption, $\la_1\sge\la_r\ge1$, $\la_1\spl\la_r\ge r>2$, $2(\la_1\spl\la_r)>4$, and thus $2\la\notin\De\cup(\De+\De)$.

This proves the first statement. Let us pass to proving the second.

It is obvious that $c\ep_1+\ep_2+\ph_r\in\La$, where
\equ{
c:=\case{
0,&\la=\ph_1+\ph_r;\\
1,&\la=\ph_2+\ph_r;\\
2,&\la=\ph_1+\ph_2+\ph_r.}}

Set $\Om:=\{c\ep_1-3\ep_j+\ph_r\cln j=2\sco r\}\subs\La$ and $H:=\ha{\Om}\subs\R^r$.

We have $H=\bc{x\in\R^r\cln(r-4)x_1=(c+1)(x_2\spl x_r)}\subs\R^r$ and $|\Om|=\dim H=r-1$. Also, $(\Om+\Om)\cap\De=(\Om-\Om)\cap\De=\es$.

Finally, by Lemma~\ref{C_larb}, $\de\cdot\bgm{\Inn{\La}\sm H}>|\De\sm H|+6$.
\end{proof}

\begin{prop}\label{C_las} Suppose that $r\in\{4,5\}$ and $\la=\ph_{r-1}$. Then
\begin{nums}{-1}
\item $2\la\notin\De\cup(\De+\De)$\~
\item there exist a~subset $\Om\subs\La$ and a~hyperplane $H\subs\R^4$ such that $\ha{\Om}=H$, $(\Om-\Om)\cap\De=\es$,
$(\de=1)\Ra\br{(\Om+\Om)\cap\De=\es}$, and $\de\cdot\bgm{\Inn{\La}\sm H}>|\De\sm H|+6$.
\end{nums}
\end{prop}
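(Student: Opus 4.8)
The plan is to prove~(i) by a size estimate on coordinates and~(ii) by an explicit choice of $\Om$ and~$H$. Put $u:=\ep_1\spl\ep_{r-1}\in\R^r$, so that $\la=\ph_{r-1}=u$, and recall that the orbit $\La=W\la$ consists of all vectors having exactly one zero coordinate and the remaining $r-1$ coordinates equal to~$\pm1$ (the Weyl group~$W$ of~$C_r$ being the full group of signed coordinate permutations).

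For~(i): the vector $2\la$ has $r-1\ge3$ coordinates equal to~$2$ and one equal to~$0$, so the sum of absolute values of its coordinates is $2(r-1)\ge6$. Every root of~$C_r$ (long root $\pm2\ep_i$ or short root $\pm\ep_i\pm\ep_j$) has that sum equal to~$2$, hence every element of $\De+\De$ has it at most~$4<6$. Therefore $2\la\notin\De\cup(\De+\De)$.

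For~(ii): I would take the coordinate hyperplane $H:=\bc{x\in\R^r\cln x_r=0}$ and
\equ{\Om:=\bc{u-2\ep_j\cln j=1\sco r-1}\subs\La.}
Each $u-2\ep_j$ arises from $\la$ by the sign change $\ep_j\mapsto-\ep_j\in W$, so $\Om\subs\La$; also $\Om\subs H$ obviously. A relation $\sums{j}c_j(u-2\ep_j)=0$ forces $2c_i=\sums{j}c_j$ for each $i\le r-1$, so all $c_i$ equal a common value~$c$ and $(r-3)(r-1)c=0$; since $r\ne3$ this gives $c=0$, so the $r-1$ vectors of~$\Om$ are linearly independent and $\ha{\Om}=H$ is a hyperplane. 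For the intersection conditions, $(u-2\ep_j)-(u-2\ep_k)=2\ep_k-2\ep_j$ has, for $j\ne k$, two coordinates of absolute value~$2$ and so is not a root, whence $(\Om-\Om)\cap\De=\es$. Since $\la\in Q$ precisely when $r-1$ is even, $\de=1$ for $r=5$ and $\de=2$ for $r=4$; and when $r=5$ the sum $(u-2\ep_j)+(u-2\ep_k)=2u-2\ep_j-2\ep_k$ has at least two coordinates of absolute value~$2$, hence is not a root, so $(\Om+\Om)\cap\De=\es$.

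It remains to verify $\de\cdot\bgm{\Inn{\La}\sm H}>|\De\sm H|+6$. The convex hull is $\conv(\La)=\bc{x\in\R^r\cln|x_i|\le1\ \fa i,\ \sums{i}|x_i|\le r-1}$ (whose set of extreme points is exactly~$\La$), so $\Inn{\La}=\conv(\La)\cap(\La+Q)$ is the set of $x\in\{-1,0,1\}^r$ whose number of nonzero coordinates is congruent to $r-1$ modulo~$2$ (this congruence already forces that number to be at most $r-1$). Counting by the number of nonzero coordinates gives $\bgm{\Inn{\La}}=\tfrac12(3^r-1)$, and the same count carried out inside~$H$ gives $\bgm{\Inn{\La}\cap H}=\tfrac12(3^{r-1}+1)$, hence $\bgm{\Inn{\La}\sm H}=3^{r-1}-1$; also $|\De\cap H|=2(r-1)^2$, so $|\De\sm H|=4r-2$. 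For $r=4$ (then $\de=2$) this gives $2\cdot26=52>20=14+6$, and for $r=5$ (then $\de=1$) it gives $80>24=18+6$, as required. I expect the only non-automatic point to be the explicit description of $\conv(\La)$ and the attendant bookkeeping for $\bgm{\Inn{\La}\sm H}$ and $\bgm{\Inn{\La}\cap H}$; the construction of~$\Om$ and all the intersection checks are immediate coordinate computations, and~(i) is trivial.
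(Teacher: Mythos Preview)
Your argument is correct and uses exactly the same set $\Om=\{\ph_{r-1}-2\ep_j\cln j=1\sco r-1\}$ and hyperplane $H=\ha{\ep_1\sco\ep_{r-1}}$ as the paper. The treatment of~(i) and of the conditions $(\Om\pm\Om)\cap\De=\es$ is essentially identical (the paper argues via the coordinate sum $2(r-3)>2$ rather than via ``two coordinates of absolute value~$2$'', but this is cosmetic).

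The one substantive difference is how the key inequality $\de\cdot\bgm{\Inn{\La}\sm H}>|\De\sm H|+6$ is established. The paper does not compute $\bgm{\Inn{\La}\sm H}$ at all: it observes that $|\De\sm H|+6=4r+4<4(r-1)(r-2)$ and then invokes Lemma~\ref{C_larb}, which says that $\de\cdot\bgm{\Inn{\La}\sm H}\le|\De\sm H|+6$ would force either $r=4,\ \la=\ph_4$ or else $|\De\sm H|+6\ge4(r-1)(r-2)$\ ---\ both impossible here. You instead give an explicit description of $\Inn{\La}$ as $\{x\in\{-1,0,1\}^r\cln\sum|x_i|\equiv r-1\pmod2\}$ and count directly, obtaining $\bgm{\Inn{\La}\sm H}=3^{r-1}-1$. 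Your route is self-contained and avoids the general lemma at the cost of the convex-hull identification you flag (which is routine: the polytope $\{|x_i|\le1,\ \sum|x_i|\le r-1\}$ is the cube with its $2^r$ corners truncated, and the new vertices are exactly~$\La$); the paper's route reuses machinery already developed for the broader classification.
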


\begin{proof} We have $\la=\ep_1\spl\ep_{r-1}$, $\la_1\spl\la_r=r-1>2$, $2(\la_1\spl\la_r)>4$, and thus $2\la\notin\De\cup(\De+\De)$.

Set $\Om:=\{\ph_{r-1}-2\ep_j\cln j=1\sco r-1\}\subs\La$.

Note that $(\Om-\Om)\cap\De=\es$ and
\eqn{\label{C_sic}
\fa\la'\in\Om\quad\quad\quad\la'_1\spl\la'_r=(r-1)-2=r-3>0.}
We have $H:=\ha{\Om}=\ha{\ep_1\sco\ep_{r-1}}\subs\R^r$. It follows that $\dim H=r-1$ and, also,
$\De\sm H\bw=\{\pm\ep_j\pm\ep_r,\pm2\ep_r\cln j=1\sco r-1\}\bw\subs\De$, $|\De\sm H|=4r-2<4r\le4r(r-3)$, $|\De\sm H|+6\bw<4r(r-3)+6\bw<4(r-1)(r-2)$.

Lemma~\ref{C_larb} and the relations $|\De\sm H|+6<4(r-1)(r-2)$ and $\la\notin\De\cup\{\ph_1,\ph_r\}$ imply $\de\cdot\bgm{\Inn{\La}\sm H}>|\De\sm H|+6$.

It remains to prove that $(\de=1)\Ra\br{(\Om+\Om)\cap\De=\es}$.

If $\de=1$, then $\ph_{r-1}\in Q$, $r-1\in2\Z$, $r=5$, $2(r-3)>2$, and, by~\eqref{C_sic}, $(\Om+\Om)\cap\De=\es$.
\end{proof}

\begin{prop}\label{C_thi} Suppose that $r=3$ and $\la=\ph_3$. Denote by~$H$ the hyperplane $(\R\al_1)^{\perp}\subs\R^3$. Then $|\De\sm H|=14$, and,
also, there exists a~subset $\Om\subs\La$ such that $\ha{\Om}=H$ and $(\Om-\Om)\cap\De=\es$.
\end{prop}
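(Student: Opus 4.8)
The plan is a short direct computation inside the root system $\De\cong C_3$, for which $|\De|=2r^2=18$. First I would make the hyperplane explicit: since $\al_1=\ep_1-\ep_2$, we have $H=(\R\al_1)^{\perp}=\bc{x\in\R^3\cln x_1=x_2}$. A root $\al\in\De=\bc{\pm2\ep_i,\pm\ep_i\pm\ep_j\cln i\ne j}$ lies in $H$ precisely when its first two coordinates coincide, and scanning the list shows that the only such roots are $\pm2\ep_3$ and $\pm(\ep_1+\ep_2)$. Hence $|\De\cap H|=4$ and $|\De\sm H|=18-4=14$, which is the first assertion.

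Next I would describe $\La$. Here $\la=\ph_3=\ep_1+\ep_2+\ep_3$, and $W$ acts on $\R^3$ by all coordinate permutations and all sign changes, so $\La=W\la=\bc{\pm\ep_1\pm\ep_2\pm\ep_3}$ has $2^3=8$ elements; those lying in $H$ are exactly the four vectors $\pm(\ep_1+\ep_2)\pm\ep_3$ whose $\ep_1$- and $\ep_2$-signs agree.

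Then it remains to exhibit $\Om$. I would take $\Om:=\bc{\ep_1+\ep_2+\ep_3,\ -\ep_1-\ep_2+\ep_3}\subs\La\cap H$. These two vectors are linearly independent and both lie in the $2$-dimensional subspace $H$, so $\ha{\Om}=H$. Finally $\Om-\Om=\bc{0,\ \pm2(\ep_1+\ep_2)}$, and $2(\ep_1+\ep_2)$ has squared length $8$ while every root of $C_3$ has squared length $2$ or $4$; therefore $(\Om-\Om)\cap\De=\es$, and the proof is complete.

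There is essentially no obstacle here, only one small point to watch in choosing $\Om$: the two most obvious points $\ph_3$ and $\ph_3-\al_r=\ep_1+\ep_2-\ep_3$ of $\La\cap H$ differ by $2\ep_3\in\De$, so they cannot be used together; flipping the signs of $\ep_1$ and $\ep_2$ instead produces the harmless difference $2(\ep_1+\ep_2)$, which is too long to be a root. Note that the statement asks only for $(\Om-\Om)\cap\De=\es$, not for $(\Om+\Om)\cap\De=\es$; in fact no pair of vectors of $\La\cap H$ spanning $H$ satisfies both, which is consistent with $\de=2$ in this case, so that Lemma~\ref{nosm} does not require the sum condition.
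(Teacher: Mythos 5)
Your proof is correct and follows essentially the same route as the paper: the same count $|\De\cap H|=4$ giving $|\De\sm H|=14$, and the same choice $\Om=\bc{\pm(\ep_1+\ep_2)+\ep_3}$, whose only nonzero differences $\pm2(\ep_1+\ep_2)$ are too long to be roots. Your closing remark about the sum condition being unnecessary here (since $\de=2$) is also consistent with how the paper applies this proposition.
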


\begin{proof} By assumption, $\la=\ep_1+\ep_2+\ep_3$, $|\De|=2r^2=18$, and $H=\bc{x\in\R^3\cln x_1=x_2}\subs\R^3$.

We have $\De\cap H=\bc{\pm(\ep_1+\ep_2),\pm2\ep_3}\subs\De$, $|\De\cap H|=4$, $|\De\sm H|=14$, and the subset $\Om:=\bc{\pm(\ep_1+\ep_2)+\ep_3}\subs\La$
satisfies $\ha{\Om}=H$ and $(\Om-\Om)\cap\De=\es$.
\end{proof}

\subsection{The case of type~$D$}

Suppose that $r>3$, $\De=\{\pm\ep_i\pm\ep_j\cln i\ne j\}\subs\R^r$, $C=\{x\in\R^r\cln x_1\sge x_{r-1}\bw\ge|x_r|\}\bw\subs\R^r$, $\al_i=\ep_i-\ep_{i+1}$
($i=1\sco r-1$), and $\al_r=\ep_{r-1}+\ep_r$.

We have $\De\cong D_r$, $|\De|=2r(r-1)$, $|P/Q|=4$, $\la_1\sco\la_{r-1}\in\la_0+\Z_{\ge0}$, and $\la_r\bw\in\la_0+\Z$ ($\la_0\in\BC{0,\frac{1}{2}}$).

Let $p,q\in\{1\sco r\}$ be some distinct numbers, $\pi$ the orthogonal projection operator $\R^r\thra\ha{\ep_p,\ep_q}$, $\al$ the root
$\ep_p+\ep_q\in\De$, and $K_0\subs\ha{\ep_p,\ep_q}\subs\R^r$ the subset of all vectors $\frac{1}{2}\cdot(3\ep_i\pm\ep_j)$ ($\{i,j\}=\{p,q\}$). Set
$K:=\Inn{\La}\cap\pi^{-1}(K_0)\subs\Inn{\La}$.

It is easy to see that
\begin{nums}{-1}
\item $(K-K)\cap\br{(\R\al)\sm\{0\}}=\es$\~
\item we have $\sums{x\in K_0}(x_p+x_q)=6$ where every summand is positive\~
\item if $\la\in\ep_1+\{\ph_{r-1},\ph_r\}$, then $\bgm{\Inn{\La}\cap\pi^{-1}(x)}=2^{r-3}$ for any $x\in K_0$\~
\item if $\la\in\ep_1+\ep_2+\{\ph_{r-1},\ph_r\}$, then $\bgm{\Inn{\La}\cap\pi^{-1}(x)}=2^{r-3}(r-1)$ for any $x\in K_0$\~
\item if $\la_0=\frac{1}{2}$ and $\la\ne\ph_{r-1},\ph_r$, then $\bgm{\Inn{\La}\cap\pi^{-1}(x)}\ge2^{r-3}$ for any $x\in K_0$, and, consequently,
$\sums{x\in K}(x,\chk{\al})=\sums{x\in K}(x_p+x_q)\ge6\cdot2^{r-3}$\~
\item if $\la_0=\frac{1}{2}$, $\la\ne\ph_{r-1},\ph_r$, and $\la\ne\ep_1+\ph_{r-1},\ep_1+\ph_r$, then $\bgm{\Inn{\La}\cap\pi^{-1}(x)}\ge2^{r-3}(r-1)$ for
any $x\in K_0$, and thus $\sums{x\in K}(x,\chk{\al})=\sums{x\in K}(x_p+x_q)\ge6\cdot2^{r-3}(r-1)$.
\end{nums}

\begin{lemma}\label{D_odd} Suppose that $(\la,\chk{\al}_{r-1})\neqi(\la,\chk{\al}_r)\pmod{2}$ and $\la\ne\ph_{r-1},\ph_r$. Set $\de:=1\bw\in\R$ if
$r\in4\Z$ and $\de:=2\in\R$ if $r\notin4\Z$. If some hyperplane $H\subs\R^r$ satisfies $\de\cdot\bgm{\Inn{\La}\sm H}\bw\le|\De\sm H|+6$, then $r=4$ and
$\la\in\{\ph_1+\ph_3,\ph_1+\ph_4\}$.
\end{lemma}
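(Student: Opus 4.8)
The first step is to observe that the congruence hypothesis is just the condition $\la_0=\frac12$ in disguise: since $\chk{\al}_{r-1}=\ep_{r-1}-\ep_r$ and $\chk{\al}_r=\ep_{r-1}+\ep_r$, we have $(\la,\chk{\al}_{r-1})-(\la,\chk{\al}_r)=-2\la_r$, an odd integer precisely when $\la_r\in\frac12+\Z$, i.e. $\la_0=\frac12$. Together with the hypothesis $\la\ne\ph_{r-1},\ph_r$, this puts us exactly in the situation covered by the items listed just before the lemma, and the plan is to combine those items with Lemma~\ref{sxa}.

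Since $r>3$, the vectors $\ep_p+\ep_q$ with $p\ne q$ span $\R^r$, so one may fix distinct indices $p,q$ with $\al:=\ep_p+\ep_q\notin H$. For this pair consider the projection $\pi$, the set $K_0$, and $K:=\Inn{\La}\cap\pi^{-1}(K_0)$ as in the discussion preceding the lemma; then $\chk{\al}=\al$, so $(x,\chk{\al})=x_p+x_q$, and the listed property $(K-K)\cap\br{(\R\al)\sm\{0\}}=\es$ together with $K\subs\Inn{\La}$ makes Lemma~\ref{sxa} applicable, giving $|\Inn{\La}\sm H|\ge\sums{x\in K}(x_p+x_q)$. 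Feeding this and the trivial bound $|\De\sm H|\le|\De|=2r(r-1)$ into the hypothesis $\de\cdot|\Inn{\La}\sm H|\le|\De\sm H|+6$ turns the whole problem into the inequality $\de\cdot\sums{x\in K}(x_p+x_q)\le2r(r-1)+6$, so it remains to lower-bound that sum.

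I would then split into two cases. If $\la\notin\{\ep_1+\ph_{r-1},\ep_1+\ph_r\}$, the last of the items listed before the lemma gives $\sums{x\in K}(x_p+x_q)\ge6\cdot2^{r-3}(r-1)$, so (using $\de\ge1$) $6\cdot2^{r-3}(r-1)\le2r(r-1)+6$; dividing by $r-1$ and using $\frac{6}{r-1}\le2$ this forces $3\cdot2^{r-3}\le r+1$, false for every $r\ge4$. Hence $\la\in\{\ep_1+\ph_{r-1},\ep_1+\ph_r\}=\{\ph_1+\ph_{r-1},\ph_1+\ph_r\}$ (recall $\ph_1=\ep_1$). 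For such $\la$ the item concerning $\la\in\ep_1+\{\ph_{r-1},\ph_r\}$ gives $\sums{x\in K}(x_p+x_q)=6\cdot2^{r-3}$, so $\de\cdot6\cdot2^{r-3}\le2r(r-1)+6$. When $r\notin4\Z$ we have $r\ge5$ and $\de=2$, but $12\cdot2^{r-3}>2r(r-1)+6$; when $r\in4\Z$ with $r\ge8$ we have $\de=1$, but $6\cdot2^{r-3}>2r(r-1)+6$; either way we get a contradiction, leaving only $r=4$. Since $\ph_{r-1}=\ph_3$ and $\ph_r=\ph_4$ for $r=4$, this is exactly the asserted conclusion $\la\in\{\ph_1+\ph_3,\ph_1+\ph_4\}$.

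The only computations involved are the two elementary exponential-versus-polynomial inequalities $3\cdot2^{r-3}>r+1$ for $r\ge4$ and $\de\cdot6\cdot2^{r-3}>2r(r-1)+6$ for $r\ge5$ with $\de$ as prescribed, both settled by a one-line induction, plus the bookkeeping check that a root $\ep_p+\ep_q$ not lying in $H$ can be chosen; I do not anticipate any genuine obstacle beyond this.
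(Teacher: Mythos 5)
Your proof is correct and follows essentially the same route as the paper's: translate the congruence into $\la_0=\tfrac12$, pick a root $\ep_p+\ep_q\notin H$, and apply Lemma~\ref{sxa} to the set $K=\Inn{\La}\cap\pi^{-1}(K_0)$ using the fiber-size estimates listed before the lemma. The paper merely packages your two cases into a single parameter $l\in\{1,r-1\}$ and the quantity $\om=\frac{r(r-1)+3}{3\cdot2^{r-3}}$, but the bounds and the final elimination of $r\ne4$ are the same.
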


\begin{proof} By assumption, $\la_0=\frac{1}{2}$.

There exist distinct numbers $p,q\in\{1\sco r\}$ such that $\al:=\ep_p+\ep_q\in\De\sm H$.

Set $l:=1\in\R$ if $\la\in\ep_1+\{\ph_{r-1},\ph_r\}$ and $l:=r-1\in\R$ if $\la\notin\ep_1+\{\ph_{r-1},\ph_r\}$. Since $\la_0=\frac{1}{2}$ and
$\la\ne\ph_{r-1},\ph_r$, there exists a~subset $K\subs\Inn{\La}$ such that $(K-K)\cap\br{(\R\al)\sm\{0\}}=\es$ and
$\sums{x\in K}(x,\chk{\al})\ge l\cdot6\cdot2^{r-3}$. By Lemma~\ref{sxa}, $\bgm{\Inn{\La}\sm H}\ge l\cdot6\cdot2^{r-3}$,
\equ{
(\de l)\cdot6\cdot2^{r-3}\le\de\cdot\bgm{\Inn{\La}\sm H}\le|\De\sm H|+6\le|\De|+6=2r(r-1)+6,}
$(\de l)\cdot3\cdot2^{r-3}\le r(r-1)+3$, $1\le\de,l\le\de l\le\om$, where $\om:=\frac{r(r-1)+3}{3\cdot2^{r-3}}\in\R$. Hence, $r\le6$. Thus, $4\le r\le6$,
$l\le\om<3\le r-1$, implying $\la\in\ep_1+\{\ph_{r-1},\ph_r\}$. If $r\ne4$, then $r\in\{5,6\}$, $\de=2$ and $\om<2$, contradicting $\de\le\om$.

It follows from above that $r=4$ and $\la\in\{\ph_1+\ph_3,\ph_1+\ph_4\}$.
\end{proof}

\begin{lemma}\label{D_ev} Suppose that $\la\in Q\sm\De$ and $\la\ne2\ph_1$. If some hyperplane $H\subs\R^r$ satisfies
$\bgm{\Inn{\La}\sm H}\le|\De\sm H|+6$, then $r=4$ and $\la\in\{2\ph_3,2\ph_4\}$.
\end{lemma}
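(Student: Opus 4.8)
The plan is to adapt the argument from the proof of Proposition~\ref{B_larb}. Since $\la\in Q\sm\{0\}$ is dominant, one has $\la\ge\ep_1+\ep_2$ in the dominance order (the highest root $\ph_2=\ep_1+\ep_2$ is the unique minimal non-zero dominant weight of~$Q$), so $\ep_1+\ep_2\in\conv(\La)\cap(\La+Q)=\Inn{\La}$ and hence $\De\subs\Inn{\La}$. Therefore, for every hyperplane $H\subs\R^r$ one has $\Inn{\La}\sm H=(\De\sm H)\sqcup((\Inn{\La}\sm\De)\sm H)$, and the hypothesis $\bgm{\Inn{\La}\sm H}\le\bgm{\De\sm H}+6$ becomes $\bgm{(\Inn{\La}\sm\De)\sm H}\le6$. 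As $r>3$, the roots $\ep_p+\ep_q$ ($p\ne q$) span~$\R^r$, so we may fix $p\ne q$ with $\be:=\ep_p+\ep_q\notin H$ and set $\si:=s_{\be}\in W$. If $x,\si x\in H$, then $x-\si x\in(\R\be)\cap H=0$, so $x=\si x\in\be^{\perp}$; hence any finite $\si$-invariant $K\subs\R^r$ with $K\cap\be^{\perp}=\es$ meets $H$ in at most one point of each pair $\{x,\si x\}$, whence $\bgm{K\sm H}\ge\frac{1}{2}\bgm{K}$. It therefore suffices to produce $K\subs\Inn{\La}\sm\De$ with $\si K=K$, $K\cap\be^{\perp}=\es$ and $\bgm{K}\ge13$, for then $\bgm{(\Inn{\La}\sm\De)\sm H}\ge7$, a contradiction.

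Assume first that $\la_1\ge2$ and $\la_1\spl\la_r\ge4$. A routine check of the coefficients of $\la-(2\ep_1+\ep_2+\ep_3)$ in the basis of simple roots gives $\la\ge2\ep_1+\ep_2+\ep_3$, and plainly $2\ep_1+\ep_2+\ep_3\in\la+Q$; hence $W(2\ep_1+\ep_2+\ep_3)\subs\Inn{\La}$, and its elements, having exactly three non-zero coordinates, are not roots. For $K:=\bc{v\in W(2\ep_1+\ep_2+\ep_3)\cln v_p+v_q\ne0}$ we obtain a $\si$-invariant set disjoint from $\be^{\perp}$ with $\bgm{K}=4(r-2)(6r-13)\ge88$, a contradiction.

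It remains to treat $\la$ with $\la_1\spl\la_r=2$ or $\la_1=1$. If $\la_1\spl\la_r=2$, then, since $\la\notin\De=W(\ep_1+\ep_2)$ and $\la\ne2\ph_1$, one sees directly that no such $\la$ exists for $r\ge5$, while for $r=4$ the only one is $\la=2\ph_3$. If $\la_1=1$ and $\la_1\spl\la_r\ge4$, then $\la$ equals $\ep_1\spl\ep_k$ up to the sign of the last coordinate, i.e. $\la$ is one of $\ph_4,\ph_6,\dots$ or one of $2\ph_{r-1}$, $2\ph_r$, $\ph_{r-1}+\ph_r$ (only those of these weights lying in~$Q$ and having coordinate sum $\ge4$ actually occur). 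In each of these cases $\ep_1+\ep_2+\ep_3+\ep_4\le\la$ and $\ep_1+\ep_2+\ep_3+\ep_4\in\la+Q$, whence $\ep_1+\ep_2+\ep_3+\ep_4\in\Inn{\La}$. When $r\ge5$ the $W$-orbit of $\ep_1+\ep_2+\ep_3+\ep_4$ is exactly the set of all vectors having precisely four non-zero coordinates, each equal to $\pm1$; this set thus lies in $\Inn{\La}$, it consists of non-roots, and for $K$ the subset of its elements with $v_p+v_q\ne0$ we get $\bgm{K}=16\bs{\Cn{r}{4}-\Cn{r-2}{4}}-8\Cn{r-2}{2}\ge56$, again a contradiction. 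For $r=4$ the only weight with $\la_1=1$ and coordinate sum $\ge4$ lying in~$Q$ is $\la=2\ph_4$ (and there the orbit above has only $8$ elements, so the argument no longer applies). Collecting the surviving possibilities, the inequality $\bgm{\Inn{\La}\sm H}\le\bgm{\De\sm H}+6$ can hold only when $r=4$ and $\la\in\{2\ph_3,2\ph_4\}$.

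The step requiring the most care is the computation of $\Inn{\La}$ in the case $\la_1=1$: one has to read off from the standard description of the weight sets of the relevant fundamental representations (the exterior powers, and the self-dual constituents of the top exterior power) that every vector with four non-zero $\pm1$ coordinates occurs, and then keep the orbit-cardinality bookkeeping precise enough that the lower bounds for $\bgm{K}$ stay above~$12$ for all pertinent~$r$. The reduction to the above cases, the coefficient check $\la\ge2\ep_1+\ep_2+\ep_3$, and the counting of $\bgm{K}$ are all routine.
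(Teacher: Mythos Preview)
Your argument is correct and follows essentially the same strategy as the paper: reduce to $\bgm{(\Inn{\La}\sm\De)\sm H}\le 6$ via $\De\subs\Inn{\La}$, then exhibit a large reflection-symmetric subset $K\subs\Inn{\La}\sm\De$ disjoint from the reflecting hyperplane so that $|K\sm H|\ge\tfrac12|K|>6$. The only differences are cosmetic: the paper splits cases by $r>4$ versus $r=4$ and uses the (non-Weyl) reflection negating a single coordinate $\ep_p$, whereas you split by $\la_1\ge 2$ versus $\la_1=1$ and use the Weyl reflection $s_{\ep_p+\ep_q}$; correspondingly, the paper's test orbits are $W(\ep_1+\ep_2+\ep_3+\ep_4)$ for $r>4$ and $W(2\ep_1+\ep_2+\ep_3)$ for $r=4$, while you use them in the opposite roles (the latter for all $r$ when $\la_1\ge 2$, the former for $r\ge 5$ when $\la_1=1$).
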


\begin{proof} By assumption, $\la_0=0$, $\la_1\sco\la_{r-1},|\la_r|\in\Z_{\ge0}$, and $\la_1\spl\la_{r-1}+|\la_r|\in4+2\Z_{\ge0}$. Therefore, using
p.~1---6 of the list above,
\begin{align*}
(r>4)\quad&\Ra\quad\br{\ep_1+\ep_2+\ep_3+\ep_4\in\Inn{\La}};\\
(r=4)\quad&\Ra\quad\Br{\br{2\ep_1+\ep_2+\ep_3\in\Inn{\La}}\lor(\la=\ep_1+\ep_2+\ep_3\pm\ep_4)},
\end{align*}
$\ph_2=\ep_1+\ep_2\in\Inn{\La}$, $\De\subs\Inn{\La}$, $\Inn{\La}\sm H=(\De\sm H)\sqcup\br{\Inn{\La}\sm(\De\cup H)}$,
$\bgm{\Inn{\La}\sm(\De\cup H)}=\bgm{\Inn{\La}\sm H}-|\De\sm H|\le6$.

There exists a~number $p\in\{1\sco r\}$ such that $\ep_p\notin H$. Further, let $\ga\in\Or(\R^r)$ be the reflection against the hyperplane
$(\R\ep_p)^{\perp}\subs\R^r$ and $\Ga\subs\Or(\R^r)$ the subgroup $\{E,\ga\}$, where $E$ denotes the identity operator. If $x\in\R^r$ and $x,\ga x\in H$,
then $x-\ga x\in\R\ep_p\cap H=0$, i.\,e. $x=\ga x$. Hence, if $K\subs\R^r$, $\Ga K=K$, and $K\cap(\R\ep_p)^{\perp}=\es$, then $K^{\Ga}=\es$,
$|K\sm H|\ge|K/\Ga|=\frac{1}{2}\cdot|K|$.

Assume that $r>4$. Then $\ep_1+\ep_2+\ep_3+\ep_4\in\Inn{\La}$. The subset $K\subs P$ of all vectors $\pm\ep_{i_1}\pm\ep_{i_2}\pm\ep_{i_3}\pm\ep_p\in P$,
where $i_1,i_2,i_3\in\{1\sco r\}\sm\{p\}$ are pairwise distinct numbers, satisfies $|K|=\frac{8}{3}\cdot(r-1)(r-2)(r-3)>16$, $K\subs\Inn{\La}\sm\De$,
$\Ga K=K$, and $K\cap(\R\ep_p)^{\perp}=\es$. Consequently, $\bgm{\Inn{\La}\sm(\De\cup H)}\ge|K\sm H|\ge\frac{1}{2}\cdot|K|>8$ contradicting
$\bgm{\Inn{\La}\sm(\De\cup H)}\le6$.

Suppose that $r=4$ and $\la\ne\ep_1+\ep_2+\ep_3\pm\ep_4$. Then $2\ep_1+\ep_2+\ep_3\in\Inn{\La}$. The subset $K\subs P$ of all vectors
$\pm\ep_i\pm\ep_j\pm2\ep_p\in P$ ($i,j\in\{1\sco r\}\sm\{p\}$, $i\ne j$) satisfies $|K|=4(r-1)(r-2)=24$, $K\subs\Inn{\La}\sm\De$, $\Ga K=K$, and
$K\cap(\R\ep_p)^{\perp}=\es$. Therefore, $\bgm{\Inn{\La}\sm(\De\cup H)}\ge|K\sm H|\ge\frac{1}{2}\cdot|K|=12$ contradicting
$\bgm{\Inn{\La}\sm(\De\cup H)}\le6$.

It follows from above that $r=4$ and $\la=\ep_1+\ep_2+\ep_3\pm\ep_4$.
\end{proof}

Let $\Pi'\subs\Pi\subs\R^r$ be an indecomposable simple system of order $r-2$.

The simple system $\Pi'\subs\Pi\subs\R^r$ corresponds to an indecomposable root system of rank $r-2$ and of order at least $(r-1)(r-2)$.
Consequently, $\bgm{\De\cap\ha{\Pi'}}\ge(r-1)(r-2)$, $\bgm{\De\sm\ha{\Pi'}}\le|\De|-(r-1)(r-2)=(r-1)(r+2)$.

\begin{lemma}\label{D_lah} Set $H:=\ba{\{\la\}\cup\Pi'}\subs\R^r$. If $r>4$, $(\la,\chk{\al}_{r-1})\eqi(\la,\chk{\al}_r)\pmod{2}$, and
$\la\notin Q\cup\{\ph_1\}$, then $\bgm{\Inn{\La}\sm H}>|\De\sm H|+6$.
\end{lemma}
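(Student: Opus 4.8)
The plan is to convert the arithmetic hypotheses into coordinate conditions, produce by hand a large $W$-invariant subset of $\Inn\La$, and then use a reflection in a root hyperplane missing $H$ to force $\Inn\La\sm H$ to be large.

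First I would restate the hypotheses. As every root of $D_r$ has the same length, $(\la,\chk\al_{r-1})=\la_{r-1}-\la_r$ and $(\la,\chk\al_r)=\la_{r-1}+\la_r$, so the congruence means $\la_r\in\Z$, i.e. $\la_0=0$ and $\la\in\Z^r$; together with $\la\notin Q$ this says $\la_1\spl\la_r$ is odd, and with $\la\ne\ph_1=\ep_1$ it forces $|\la|_1:=\la_1\spl\la_{r-1}+|\la_r|\ge3$ (it is odd and equals $1$ only for $\la=\ep_1$). A routine check in the dominance order --- splitting on $\la_1\ge2$ versus $\la_1=1$ (where $\la=\ep_1\spl\ep_k$ up to a flip of the last sign) --- then shows $\ep_1+\ep_2+\ep_3\preceq\la$; since moreover $\ep_1+\ep_2+\ep_3\in\la+Q$, both $\ep_1+\ep_2+\ep_3$ and $\ep_1$ lie in $\conv(\La)\cap(\La+Q)=\Inn\La$, and, $\Inn\La$ being $W$-invariant,
\equ{W(\ep_1+\ep_2+\ep_3)=\bc{\pm\ep_i\pm\ep_j\pm\ep_k\cln i,j,k\text{ pairwise distinct}}\subs\Inn\La,\qquad W\ep_1=\bc{\pm\ep_i}\subs\Inn\La,}
where $r>4$ ensures that every sign pattern occurs in the first orbit, so it has $8\binom r3$ vectors.

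For the reflection step, $\dim H\le r-1$ gives $\De\not\subs H$, so there is a root $v=\ep_p\pm\ep_q\in\De\sm H$; let $s\in W$ be the reflection in $v$, put $\Ga:=\{1,s\}$ and $K:=\bc{w\in W(\ep_1+\ep_2+\ep_3)\cup W\ep_1\cln(w,v)\ne0}$. Then $K\subs\Inn\La$, $sK=K$ (as $s$ preserves each $W$-orbit and fixes $v^\perp$), and $K\cap v^\perp=\es$; hence $s$ acts freely on $K$, and in each pair $\{w,sw\}\subs K$ at most one point lies in $H$ (otherwise $w-sw=(w,\chk v)v\in H$ with $(w,\chk v)\ne0$, forcing $v\in H$). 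Therefore $\bgm{\Inn\La\sm H}\ge\bgm{K\sm H}\ge\tfrac12|K|$. Counting the orbit vectors on $v^\perp$ (those vanishing at $p$ and $q$, or carrying opposite units there together with one further nonzero coordinate) gives $|W(\ep_1+\ep_2+\ep_3)\cap v^\perp|=8\binom{r-2}3+4(r-2)$ and $|W\ep_1\cap v^\perp|=2(r-2)$, so, using $\binom r3-\binom{r-2}3=\binom{r-1}2+\binom{r-2}2$, one gets $\tfrac12|K|=4(r-2)^2-2(r-2)+2$.

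Finally, since $\Pi'$ is an indecomposable system of rank $r-2$ one has $\bgm{\De\cap\ba{\Pi'}}\ge(r-1)(r-2)$ (as recalled above), hence $|\De\sm H|\le|\De|-(r-1)(r-2)=(r-1)(r+2)$, and $4(r-2)^2-2(r-2)+2>(r-1)(r+2)+6$ reduces to $3r^2-19r+18>0$, which holds for $r\ge6$. The case $r=5$ is where this estimate falls just short, and I expect it to be the main obstacle: the factor $\tfrac12$ in the reflection bound is then too lossy, so I would finish by hand. Every $\la$ in the relevant coset with $\la_1\ge2$ also has $W(2\ep_1+\ep_2)\subs\Inn\La$, and every $\la=(1,1,1,1,\pm1)$ has the extra orbit $W\la\subs\Inn\La$ --- in both cases enough to enlarge $K$; and for $\la=\ph_3=\ep_1+\ep_2+\ep_3$ one has $\Inn\La=W(\ep_1+\ep_2+\ep_3)\sqcup W\ep_1$ exactly, while $H$ is one of only four subspaces, coming from the four rank-$3$ (necessarily type $A_3$) subsystems $\Pi'\subs\Pi$, for each of which $|\Inn\La\cap H|$ and $|\De\cap H|$ are computed directly.
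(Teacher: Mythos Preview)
Your argument is correct for $r\ge6$ but genuinely diverges from the paper's in the counting step, and this is exactly why you get stuck at $r=5$.

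The paper does not use the reflection bound here. After establishing $\ep_1+\ep_2+\ep_3\in\Inn\La$ (as you do), it picks a root $\al=\ep_p+\ep_q\in\De\sm H$ and applies Lemma~\ref{sxa} (the $\al$-string inequality $\bgm{\Inn\La\sm H}\ge\sum_{x\in K}(x,\chk\al)$) to the set
\[
K=\{\ep_p\pm\ep_i\pm\ep_j,\ \ep_q\pm\ep_i\pm\ep_j,\ \ep_p+\ep_q\pm\ep_i : i,j\notin\{p,q\},\ i\ne j\}\subs\Inn\La,
\]
chosen so that no two elements differ by a nonzero multiple of $\al$. The sum equals $4(r-2)^2$, and the identity $4(r-2)^2=(3r-2)(r-5)+(r-1)(r+2)+8$ shows this already beats $|\De\sm H|+6\le(r-1)(r+2)+6$ for every $r\ge5$, with equality in the first step precisely at $r=5$. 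No case split is needed.

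Your reflection count, by contrast, extracts only one point of $\Inn\La\sm H$ from each $\Ga$-orbit in $K$, giving $\tfrac12|K|=4(r-2)^2-2(r-2)+2$; the loss $2(r-3)$ relative to the string bound is what makes $r=5$ fail ($32$ versus the needed $35$). Your proposed rescue for $r=5$ is plausible but left as a sketch: you would still have to check that $2\ep_1+\ep_2\preceq\la$ in every dominant integral $\la$ with $\la_1\ge2$ and odd $|\la|_1\ge3$, enlarge $K$ accordingly, and for $\la=\ph_3$ carry out four explicit computations of $\bgm{\Inn\La\cap H}$ and $|\De\cap H|$. All of this can be done, but it is work the paper avoids entirely by using the sharper Lemma~\ref{sxa} in place of the reflection halving. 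If you swap that single tool in, your set-up (including the use of $W\ep_1$, which the paper does not need) already yields the uniform proof.
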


\begin{proof} By assumption, $\la_0=0$, $\la_1\sco\la_{r-1},|\la_r|\in\Z_{\ge0}$, and $\la_1\spl\la_{r-1}+|\la_r|\in3+2\Z_{\ge0}$. Hence,
using p.~1---3, $\ep_1+\ep_2+\ep_3\in\Inn{\La}$. Note also that $|\De\sm H|\le\bgm{\De\sm\ha{\Pi'}}\le(r-1)(r+2)$.

There exist distinct numbers $p,q\in\{1\sco r\}$ such that $\al:=\ep_p+\ep_q\in\De\sm H$. For the subset $K\subs P$ of all vectors
$\ep_p\pm\ep_i\pm\ep_j\in P$, $\ep_q\pm\ep_i\pm\ep_j\in P$, and $\ep_p+\ep_q\pm\ep_i\in P$ ($i,j\in\{1\sco r\}\sm\{p,q\}$, $i\ne j$), we have
$K\subs\Inn{\La}$, $(K-K)\cap\br{(\R\al)\sm\{0\}}=\es$, and, besides,
\equ{
\sums{x\in K}(x,\chk{\al})=\sums{x\in K}(x_p+x_q)=(1+1)\cdot\Br{4\cdot\frac{(r-2)(r-3)}{2}}+2\cdot\br{2(r-2)}=4(r-2)^2.}
It follows from Lemma~\ref{sxa} that
$\bgm{\Inn{\La}\sm H}\bw\ge4(r-2)^2\bw=(3r-2)(r-5)+(r-1)(r+2)+8\bw\ge(r-1)(r+2)+8\bw\ge|\De\sm H|+8\bw>|\De\sm H|+6$.
\end{proof}

\begin{lemma}\label{D_lah1} Set $H:=\ba{\{\la\}\cup\Pi'}\subs\R^r$. If $\la=\ph_{r-1}$, $\Pi'=\{\al_3\sco\al_r\}\subs\Pi$, and
$\bgm{\Inn{\La}\sm H}\le|\De\sm H|+6$, then $r\le8$.
\end{lemma}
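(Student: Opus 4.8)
The plan is to compute both sides of the inequality in the hypothesis explicitly, exploiting that $\la=\ph_{r-1}$ is a minuscule fundamental weight of $D_r$, so that $\Inn{\La}$ coincides with the single $W$\nobreakdash-orbit $\La$.

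First I would pin down the hyperplane $H$. The roots $\al_3=\ep_3-\ep_4,\dots,\al_{r-1}=\ep_{r-1}-\ep_r,\al_r=\ep_{r-1}+\ep_r$ together span the coordinate subspace $\ha{\ep_3\sco\ep_r}\subs\R^r$, so $\ha{\Pi'}=\ha{\ep_3\sco\ep_r}$, of dimension $r-2$. Since $\la=\ph_{r-1}=\frac12\br{\ep_1\spl\ep_{r-1}-\ep_r}$ is congruent to $\frac12(\ep_1+\ep_2)$ modulo the subspace $\ha{\ep_3\sco\ep_r}$, we get $H=\ba{\{\la\}\cup\Pi'}=\ha{\ep_1+\ep_2,\ep_3\sco\ep_r}=\bc{x\in\R^r\cln x_1=x_2}$, which is indeed a hyperplane.

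Next I would count the two cardinalities. A root $\pm\ep_i\pm\ep_j$ of $\De\cong D_r$ fails the condition $x_1=x_2$ exactly for the two roots $\pm(\ep_1-\ep_2)$ and for the $8(r-2)$ roots $\pm\ep_1\pm\ep_j$, $\pm\ep_2\pm\ep_j$ with $j\ge 3$; hence $|\De\sm H|=2+8(r-2)=8r-14$, so $|\De\sm H|+6=8r-8$. On the other hand $\Inn{\La}=\La=W\la$ is the set of all $\frac12\br{\pm\ep_1\pm\ldots\pm\ep_r}$ with a prescribed parity of the number of minus signs, so $|\La|=2^{r-1}$; a vector of $\La$ lies in $H$ precisely when $x_1=x_2\in\bc{\pm\frac12}$, and for each of these two choices there are $2^{r-3}$ admissible completions of the remaining $r-2$ coordinates, so $|\La\cap H|=2^{r-2}$ and $\bn{\Inn{\La}\sm H}=2^{r-1}-2^{r-2}=2^{r-2}$. (Equivalently, $H=(\R\al_1)^{\perp}$, the reflection $s_{\al_1}\in W$ fixes $\La\cap H$ pointwise and splits $\La\sm H$ into pairs, and exactly half of the weights of a half-spin representation are orthogonal to any given root.)

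Finally, substituting these values, the assumption $\bn{\Inn{\La}\sm H}\le|\De\sm H|+6$ becomes $2^{r-2}\le 8r-8$. The left side grows exponentially and the right side linearly, so it can hold only for small $r$; checking small values, $2^{r-2}\le 8r-8$ fails for every $r\ge 8$, so $r\le 7$, and a fortiori $r\le 8$, as claimed. The whole argument is elementary arithmetic once the geometry is in place; the only points that need a moment's care are the identification in the first step that $\ha{\Pi'}$ is the coordinate subspace on $\ep_3,\dots,\ep_r$ (so that $H$ is the concrete hyperplane $\{x_1=x_2\}$), and in the count the use of minuscularity to replace $\Inn{\La}$ by the orbit $\La$.
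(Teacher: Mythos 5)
Your argument is correct and follows essentially the same route as the paper: identify $H=\bc{x\in\R^r\cln x_1=x_2}$, observe $\bgm{\Inn{\La}\sm H}\ge|\La\sm H|=2^{r-2}$, bound $|\De\sm H|+6$, and compare exponential against polynomial growth. The only difference is that you compute $|\De\sm H|=8r-14$ exactly where the paper uses the cruder bound $|\De\sm H|\le\bgm{\De\sm\ha{\Pi'}}\le(r-1)(r+2)$, so your version even yields $r\le7$, which of course implies the stated $r\le8$.
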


\begin{proof} By assumption, $|\La|=2^{r-1}$ and $H=\bc{x\in\R^r\cln x_1=x_2}\subs\R^r$ implying $|\La\sm H|\bw=\frac{1}{2}\cdot|\La|\bw=2^{r-2}$. Also,
$|\De\sm H|\le\bgm{\De\sm\ha{\Pi'}}\le(r-1)(r+2)$. Finally, $2^{r-2}\bw=|\La\sm H|\bw\le|\De\sm H|+6\bw\le(r-1)(r+2)+6\bw=r(r+1)+4$ and thus $r\le8$.
\end{proof}

\begin{prop}\label{D_all} Assume that $\la=\ph_1+\ph_{r-1}$. Set $\de:=1\bw\in\R$ if $r\in4\Z$ and $\de:=2\in\R$ if $r\notin4\Z$. Then
\begin{nums}{-1}
\item $2\la\notin\De\cup(\De+\De)$\~
\item there exist a~subset $\Om\subs\La$ and a~hyperplane $H\subs\R^r$ such that $\ha{\Om}=H$, $(\Om+\Om)\cap\De\bw=(\Om-\Om)\cap\De\bw=\es$, and
$\de\cdot\bgm{\Inn{\La}\sm H}>|\De\sm H|+6$.
\end{nums}
\end{prop}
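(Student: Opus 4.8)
Write $\la=\ph_1+\ph_{r-1}$ in coordinates as $\frac32\ep_1+\frac12\ep_2\spl\frac12\ep_{r-1}-\frac12\ep_r$ (using $\ph_1=\ep_1$ and $\ph_{r-1}=\frac12(\ep_1\spl\ep_{r-1}-\ep_r)$), so $\la_1=\frac32$ and $\la_0=\frac12$. For the first assertion, $2\la$ has first coordinate $3$, whereas every element of $\De\cup(\De+\De)$ has all coordinates in $\{0,\pm1,\pm2\}$; hence $2\la\notin\De\cup(\De+\De)$.

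For the second assertion I would produce an explicit witness. Put $v_j:=\ph_r-2\ep_j$ for $j=2\sco r$ (recall $\ph_r=\frac12(\ep_1\spl\ep_r)$), and set $\Om:=\{v_j\cln j=2\sco r\}$ and $H:=\ha{\Om}$. Each $v_j$ has one coordinate equal to $-\frac32$ and $r-1$ coordinates equal to $\frac12$, hence the same multiset of coordinate magnitudes as $\la$ and the same (odd) number of negative coordinates, so $v_j\in W\la=\La$ and $\Om\subs\La$. Since $v_j-v_k=2(\ep_k-\ep_j)$ has coordinates $\pm2$, since for $j\ne k$ the vector $v_j+v_k=(\ep_1\spl\ep_r)-2\ep_j-2\ep_k$ has $r>3$ nonzero coordinates, and since $2v_j$ has a coordinate equal to $-3$, while every root has exactly two nonzero coordinates each equal to $\pm1$, we get $(\Om-\Om)\cap\De=(\Om+\Om)\cap\De=\es$. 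Finally the $v_j$ are linearly independent (from $\sum c_jv_j=0$ one first gets $\sum c_j=0$ by reading off the first coordinate, and then $c_j=0$ for all $j$), so $\dim H=r-1$; a direct computation gives $H=\bc{x\in\R^r\cln(r-5)x_1=x_2\spl x_r}$, a hyperplane.

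It remains to check $\de\cdot\bgm{\Inn{\La}\sm H}>|\De\sm H|+6$. Suppose not. One computes $(\la,\chk{\al}_{r-1})=1$ and $(\la,\chk{\al}_r)=0$, which are incongruent modulo $2$, and $\la\ne\ph_{r-1},\ph_r$; hence Lemma~\ref{D_odd} applies and forces $r=4$ with $\la\in\{\ph_1+\ph_3,\ph_1+\ph_4\}$, so in fact $r=4$ and $\la=\ph_1+\ph_{r-1}=\ph_1+\ph_3$. Then $\de=1$ and $H=\bc{x\in\R^4\cln x_1\spl x_4=0}$, so $\De\cap H$ consists of the $12$ roots $\pm(\ep_i-\ep_j)$ and $|\De\sm H|=12$. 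For the weights one has $\Inn{\La}=\La\sqcup W\ph_4$, the disjoint union of the $32$ vectors of $\La$ (those with one coordinate $\pm\frac32$, the remaining three equal to $\pm\frac12$, and an odd number of minus signs) and the $8$ vectors $\frac12(\pm\ep_1\pm\ep_2\pm\ep_3\pm\ep_4)$ with an even number of minus signs; of these, $8$ vectors of $\La$ (those of shape $(\pm\frac32,\mp\frac12,\mp\frac12,\mp\frac12)$ up to permutation) and $6$ vectors of $W\ph_4$ (those with exactly two minus signs) lie on $H$, so $\bgm{\Inn{\La}\sm H}=(32-8)+(8-6)=26$. Hence $\de\cdot\bgm{\Inn{\La}\sm H}=26>18=|\De\sm H|+6$, a contradiction, which proves the proposition.

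The main obstacle is exactly this low-rank case $r=4$. For every $r\ge5$ the inequality is cheap: since $\la\in\ep_1+\{\ph_{r-1},\ph_r\}$, the fiber-count facts stated just before Lemma~\ref{D_odd}, combined with Lemma~\ref{sxa} applied to a root $\ep_p+\ep_q\notin H$ with $p,q\ge2$, give $\bgm{\Inn{\La}\sm H}\ge6\cdot2^{r-3}$, and $\de\cdot6\cdot2^{r-3}>2r(r-1)+6\ge|\De\sm H|+6$ holds for all $r\ge5$, whether $\de=1$ or $\de=2$. This crude bound collapses at $r=4$, so there one genuinely needs the exact description of $\Inn{\La}$ for the highest weight $\ph_1+\ph_3$ in type $D_4$ and the explicit intersection count with the chosen hyperplane; a secondary point to treat carefully is checking that the hypotheses of Lemma~\ref{D_odd} (the parity condition on $(\la,\chk{\al}_{r-1}),(\la,\chk{\al}_r)$ and $\la\ne\ph_{r-1},\ph_r$) actually hold for $\la=\ph_1+\ph_{r-1}$.
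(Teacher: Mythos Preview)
Your proof is correct and follows the same route as the paper: the same set $\Om=\{\ph_r-2\ep_j\cln j=2\sco r\}$, the same hyperplane $H=\bc{x\cln(r-5)x_1=x_2\spl x_r}$, and the same reduction via Lemma~\ref{D_odd} to the critical case $r=4$. The only difference is cosmetic: in the $r=4$ endgame the paper does not bother to identify the second orbit $W\ph_4$ inside $\Inn{\La}$ but simply uses $\bgm{\Inn{\La}\sm H}\ge|\La\sm H|=24>18$, while you compute $\bgm{\Inn{\La}\sm H}=26$ exactly; both give the contradiction, and your extra paragraph on $r\ge5$ is correct commentary but not needed for the argument.
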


\begin{proof} First of all note that $\la_1=\frac{3}{2}$, $2\la_1=3$, and, consequently, $2\la\notin\De\cup(\De+\De)$.

This proves the first statement. Let us pass to proving the second.

By assumption, $\la=\ep_1-\ep_r+\ph_r$. Therefore, $\ph_r-2\ep_r\in\La$, $\Om:=\{\ph_r-2\ep_j\cln j=2\sco r\}\subs\La$,
$H:=\ha{\Om}=\bc{x\in\R^r\cln(r-5)x_1=x_2\spl x_r}\subs\R^r$, and $|\Om|=\dim H=r-1$. Also, $(\Om+\Om)\cap\De=(\Om-\Om)\cap\De=\es$.

It remains to prove that $\de\cdot\bgm{\Inn{\La}\sm H}>|\De\sm H|+6$.

Suppose that $\de\cdot\bgm{\Inn{\La}\sm H}\le|\De\sm H|+6$. By Lemma~\ref{D_odd}, $r=4$, $|\La|=2^{r-1}\cdot r=32$, $|\De|=2r(r-1)=24$,
and $H=\bc{x\in\R^r\cln x_1+x_2+x_3+x_4=0}\subs\R^r$. It follows that $|\De\cap H|=r(r-1)=12$, $|\De\sm H|=12$, and, also,
$\La\cap H=\Om\sqcup(-\Om)\sqcup\bc{\pm(\ph_4-2\ep_1)}\subs\La$, $|\La\cap H|=2r=8$, $|\La\sm H|=24>|\De\sm H|+6$,
$\de\cdot\bgm{\Inn{\La}\sm H}\ge|\La\sm H|>|\De\sm H|+6$.

Thus, we come to a~contradiction, proving the claim.
\end{proof}

\begin{prop}\label{D_last} If $r=7$ and $\la=\ph_6$, then there exist a~subset $\Om\subs\La$ and a~hyperplane $H\subs\R^7$ such that $\ha{\Om}=H$,
$(\Om-\Om)\cap\De=\es$, and $2\cdot\bgm{\Inn{\La}\sm H}>|\De\sm H|+6$.
\end{prop}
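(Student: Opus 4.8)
The plan is to exhibit, for $r=7$ and $\la=\ph_6=\frac12(\ep_1\spl\ep_6-\ep_7)$, an explicit six-element set $\Om\subs\La$ of half-spin weights whose pairwise differences are all too long to be roots, whose linear span is a hyperplane $H$, and for which $|\La\sm H|$ is large while $|\De\sm H|$ is controlled. Since $\Inn{\La}\sups\La$ (property~(iii) of the list in \Ss\ref{facts}), one has $\bgm{\Inn{\La}\sm H}\ge|\La\sm H|$, so the asserted inequality $2\cdot\bgm{\Inn{\La}\sm H}>|\De\sm H|+6$ will follow from the counts $|\La|=2^{r-1}=64$ and $|\De|=2r(r-1)=84$ once $|\La\cap H|$ and $|\De\cap H|$ are determined; in fact it only requires $|\De\cap H|>2\,|\La\cap H|-38$, which leaves ample room.

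Concretely I would take $H:=\bc{x\in\R^7\cln x_1+x_6=x_2+x_5}$ and let $\Om$ consist of $\la$ together with the five weights $\la-\ep_1-\ep_2-\ep_3-\ep_4$, $\la-\ep_1-\ep_2-\ep_5-\ep_6$, $\la-\ep_3-\ep_4-\ep_5-\ep_6$, $\la-\ep_1-\ep_4-\ep_5+\ep_7$, and $\la-\ep_2-\ep_4-\ep_6+\ep_7$. Three routine checks are then needed. First, each element of $\Om$ is obtained from $\la$ by flipping exactly four of the seven signs, hence still has an odd number of negative coordinates and therefore lies in $\La=W\la$. Second, each of the six vectors satisfies $x_1+x_6=x_2+x_5$, and the only vector orthogonal to all of them is a multiple of $(1,-1,0,0,-1,1,0)$, so they span the hyperplane $H$, i.e. $\ha{\Om}=H$. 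Third, for any two distinct elements of $\Om$ the corresponding sets of flipped coordinates have symmetric difference of size exactly $4$, so the difference of the two weights has exactly four nonzero coordinates; as every root of $D_7$ has exactly two nonzero coordinates, $(\Om-\Om)\cap\De=\es$.

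It then remains to count. An element $\frac12\sum_i s_i\ep_i$ of $\La$ (so $s_i=\pm1$ with an odd number of $-1$'s) lies in $H$ iff $s_1+s_6=s_2+s_5$; the quadruple $(s_1,s_2,s_5,s_6)$ realizes this equality in $6$ ways, each carrying an even number of $-1$'s, so the triple $(s_3,s_4,s_7)$ must carry an odd number of $-1$'s, giving $|\La\cap H|=6\cdot4=24$ and $|\La\sm H|=40$. Likewise, putting $g(1)=g(6)=1$, $g(2)=g(5)=-1$, $g(3)=g(4)=g(7)=0$, a root $\ep_i-\ep_j$ lies in $H$ iff $g(i)=g(j)$ ($10$ such roots) and a root $\pm(\ep_i+\ep_j)$ lies in $H$ iff $g(i)+g(j)=0$ ($14$ such roots), so $|\De\cap H|=24$ and $|\De\sm H|=60$. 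Hence $2\cdot\bgm{\Inn{\La}\sm H}\ge 2\cdot40=80>66=60+6=|\De\sm H|+6$, which is the claim.

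The one step that is not mere bookkeeping is the choice of $\Om$: one needs at least six half-spin weights of $D_7$ forming a configuration of pairwise Hamming distance $\ge4$ (so that no difference is a root) that nevertheless spans a full hyperplane. Such a configuration cannot be taken inside a single coset of the $[7,3,4]$ simplex code, since all weights of such a coset lie in a $4$-dimensional affine subspace; the set above is obtained by combining three weights attached to the partition of $\{1,\dots,6\}$ into the blocks $\{1,2\}$, $\{3,4\}$, $\{5,6\}$ with two ``transversal'' weights that additionally flip $\ep_7$, which is exactly enough to span a hyperplane while keeping every pairwise distance equal to $4$. Once $\Om$ is fixed, all the remaining assertions reduce to finite verifications with sign vectors, using $|\La|=2^{r-1}$, $|\De|=2r(r-1)$, and $\Inn{\La}\sups\La$.
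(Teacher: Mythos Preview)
Your proof is correct and follows essentially the same approach as the paper's own proof: both choose an explicit six-element subset $\Om\subs\La$ of half-spin weights at pairwise Hamming distance~$4$ spanning a hyperplane of the form $\{x_i+x_k=x_j+x_l\}$, and both arrive at $|\La\cap H|=24$, $|\La\sm H|=40$, whence $2\cdot40=80>|\De\sm H|+6$. The paper uses $H=\{x_1+x_3=x_2+x_4\}$ with a different but equivalent $\Om$, and is content with the cruder bound $|\De\cap H|\ge\bgm{\De\cap\ha{\ep_5,\ep_6,\ep_7}}=12$ (so $|\De\sm H|\le72$), whereas you compute $|\De\cap H|=24$ exactly; either suffices. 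One tiny wording slip: you write that ``each element of $\Om$ is obtained from $\la$ by flipping exactly four of the seven signs,'' but $\la$ itself is in $\Om$ with zero flips---this does not affect the argument.
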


\begin{proof} By assumption, $\la=\ph_7-\ep_7$, $|\La|=2^{r-1}=64$, and $|\De|=2r(r-1)=84$.

Set $\Om_0:=\bc{\ep_1+\ep_2+\ep_5,\ep_3+\ep_4+\ep_5,\ep_1+\ep_4+\ep_6,\ep_2+\ep_3+\ep_6,\ep_5+\ep_6+\ep_7}\subs\R^7$. The subset
$\Om:=\{-\ph_7\}\sqcup(\ph_7-\Om_0)\subs\La$ satisfies $(\Om-\Om)\cap\De=\es$ and $H:=\ha{\Om}\bw=\bc{x\in\R^7\cln x_1+x_3\bw=x_2+x_4}\bw\subs\R^7$.
Hence, $|\La\cap H|=24$, $|\La\sm H|=40$. We also have $\De\cap H\sups\De\cap\ha{\ep_5,\ep_6,\ep_7}$, $|\De\cap H|\ge12$, $|\De\sm H|\le72$,
$2\cdot|\La\sm H|=80>|\De\sm H|+6$.
\end{proof}

\section{Particular cases}\label{promain1}

In this section, we will prove Theorems \ref{B_main1}, \ref{C_main1}, and~\ref{D_main1}.

As before, we will use the conventions of \Ss\ref{introd}.

Set $n:=2r+1\in\N$ if $G\cong B_r$ and $n:=2r\in\N$ otherwise.

Suppose that $G=G^0$ and the linear algebra $R(\ggt_{\Cbb})$ is isomorphic to one of the following linear algebras\:
\begin{nums}{-1}
\item\label{adj} $\ad(\ggt_{\Cbb})$\~
\item\label{B_tav} $R_{\ph_1}(B_r)$\~
\item\label{B_sym} $R_{2\ph_1}(B_r)$\~
\item\label{B_symp} $R_{\ph_2}(B_2)\cong R_{\ph_1}(C_2)$\~
\item\label{C_sym} $R_{\ph_2}(C_r)$ \ter{$r>2$}\~
\item\label{C_tav} $R_{\ph_1}(C_r)$ \ter{$r>2$}\~
\item\label{C_symp} $R_{\ph_4}(C_4)$\~
\item\label{D_tav} $R_{\ph_1}(D_r)$ \ter{$r>3$}\~
\item\label{D_sym} $R_{2\ph_1}(D_r)$ \ter{$r>3$}\~
\item\label{D_fil} $R_{\ph_5}(D_5)$\~
\item\label{D_eil} $R_{\ph_8}(D_8)$.
\end{nums}

The following representations of complex simple Lie groups are polar (see~\cite[\Ss3]{CD})\:
\begin{itemize}
\item the adjoint representation of an arbitrary complex simple Lie group\~
\item the representations $R_{\ph_1}$ and~$R_{2\ph_1}$ of the complex simple Lie group $\SO_n(\Cbb)$\~
\item the representations $R_{\ph_2}$ and $2R_{\ph_1}$ of the complex simple Lie group $\Sp_{2r}(\Cbb)$\~
\item the representation~$R_{\ph_4}$ of the complex simple Lie group $\Sp_8(\Cbb)$\~
\item the representation $R_{\ph_5}+R'_{\ph_5}$ of the complex simple Lie group $\Spin_{10}(\Cbb)$\~
\item the representation~$R_{\ph_8}$ of the complex simple Lie group $\Spin_{16}(\Cbb)$.
\end{itemize}
Therefore, in each of the cases \ref{adj}---\ref{D_eil}, the linear Lie group $G\subs\GL(V)$ is polar and, by Lemma~\ref{pol}, the quotient $V/G$ is
homeomorphic to a~closed half-space (in particular, is not a~manifold).

Thus, we have completely proved Theorems \ref{B_main1}, \ref{C_main1}, and~\ref{D_main1}.

Theorems \ref{B_main}---\ref{D_main1} obviously imply Corollaries \ref{B_main2}---\ref{D_main2}.

\section*{Acknowledgements}

The author is very thankful to Prof. E.\,B.\?Vinberg for permanent support and valuable remarks and advice.

\newpage

\end{document}